\documentclass[a4paper,10pt]{amsart}

\usepackage{amssymb,amsmath,amsthm,mathrsfs,enumerate,graphicx, color}
\usepackage[pdfpagelabels,colorlinks,linkcolor=blue,citecolor=black,urlcolor=blue]{hyperref}
\usepackage{esint}
\newtheorem{thm}{Theorem}[section]

\newtheorem{lem}[thm]{Lemma}
\newtheorem{prop}[thm]{Proposition}
\newtheorem{defn}[thm]{Definition}
\newtheorem{rem}[thm]{Remark}

\newcommand{\lesi}{\lesssim}

\newcommand{\f}{\frac}

\newcommand{\Om}{\Omega}

\newcommand{\vc}{\infty}
\newcommand{\Rn}{\mathbb{R}^n}

\newcommand{\di}{{\rm div}}

\textwidth =160mm \textheight =240mm
\oddsidemargin 0mm
\evensidemargin 0mm
\setlength{\topmargin}{0cm}

\title[Global Marcinkiewicz estimates for parabolic equations]{Global Marcinkiewicz estimates for nonlinear parabolic equations with nonsmooth coefficients}         
\author{The Anh Bui}
\address{Department of Mathematics, Macquarie University, NSW 2109,
Australia}
\email{the.bui@mq.edu.au, bt\_anh80@yahoo.com}
 
\author{Xuan Thinh Duong}
\address{Department of Mathematics, Macquarie University, NSW 2109,
Australia}

\email{xuan.duong@mq.edu.au}

\keywords{nonlinear parabolic equation, measure data problem, Reifenberg flat domain, Marcinkiewicz estimate}
\subjclass[2010]{35R06, 35R05, 35K65, 35B65 }

\begin{document}

\date{}

\maketitle

\begin{abstract} 
	Consider the parabolic equation with measure data
	\begin{equation*}
	\left\{
	\begin{aligned}
	&u_t-\di \mathbf{a}(D u,x,t)=\mu&\text{in}& \quad \Om_T,\\
	&u=0 \quad &\text{on}& \quad \partial_p\Om_T,
	\end{aligned}\right.
	\end{equation*}
	where $\Om$ is a bounded domain in $\Rn$, $\Om_T=\Om\times (0,T)$,  $\partial_p\Om_T=(\partial\Om\times (0,T))\cup (\Om\times\{0\})$, and $\mu$ is a signed Borel measure with finite total mass. Assume that the nonlinearity ${\bf a}$ satisfies a small BMO-seminorm condition, and $\Om$ is a Reifenberg flat domain. This paper proves a  global Marcinkiewicz estimate for the SOLA (Solution Obtained as Limits of Approximation) to the parabolic equation. 
\end{abstract}

\tableofcontents

\section{Introduction}

Let $\Om$ be a bounded open domain in $\Rn$, $n\geq 2$. For $p\geq 2$, we consider the following parabolic equation with measure data
\begin{equation}\label{ParabolicProblem}
\left\{
\begin{aligned}
&u_t-\di \mathbf{a}(D u,x,t)=\mu&\text{in}& \quad \Om_T,\\
&u=0 \quad &\text{on}& \quad \partial_p\Om_T,
\end{aligned}\right.
\end{equation}
where $T>0$ is a given positive constant, $\Om_T=\Om\times (0,T)$,  $\partial_p\Om_T=(\partial\Om\times (0,T))\cup (\bar \Om\times\{0\})$, and $\mu$ is a signed Borel measure with finite total mass.  Throughout the paper, we denote $u_t=\f{\partial u}{\partial t}$ and $Du=D_xu:=(D_{x_1},\ldots, D_{x_n})$.

In this paper, we assume that the nonlinearity $\mathbf{a}(\xi,x,t)=(\mathbf{a}^1,\ldots,\mathbf{a}^n): \mathbb{R}^n\times \mathbb{R}^n\times \mathbb{R}\to \mathbb{R}^n$ in \eqref{ParabolicProblem} is measurable in $(x,t)$ for every $\xi$, differentiable in $\xi$ for a.e. $(x,t)$, and  satisfies the following conditions: there exist $\Lambda_1, \Lambda_2>0$ so that 
	\begin{equation}\label{eq1-functiona}
	|\mathbf{a}(\xi,x,t)|+|\xi||D_\xi\mathbf{a}(\xi,x,t)|\leq \Lambda_1|\xi|^{p-1},
	\end{equation}
	and
	\begin{equation}\label{eq2-functiona}
	\langle \mathbf{a}(\xi,x,t)-\mathbf{a}(\eta,x,t), \xi -\eta\rangle\geq \Lambda_2
	|\xi-\eta|^p
	\end{equation}
	for a.e $(\xi,\eta) \in \mathbb{R}^n\times \mathbb{R}^n$ and a.e. $(x,t)\in \mathbb{R}^n\times \mathbb{R}$.

Note that a standard example of such a nonlinearity $\mathbf{a}(\xi,x,t)$ satisfying these conditions is the
$p$-Laplacian $\Delta_p u={\rm div}(|D u|^{p-2}Du)$ with respect to $\mathbf{a}(\xi,x,t)=|\xi|^{p-2}\xi$. This general nonlinearity was studied for both elliptic and parabolic equation by many authors. See for example \cite{AM1, KL, Gia, GM, IS, JK, LSU, BW2, BDL, BD} and the reference therein.

\begin{defn}
A function $u\in C(0,T; L^2(\Om))\cap L^p(0, T; W^{1,p}_0(\Om))$ is said to be a weak solution to the equation \eqref{ParabolicProblem} if the following holds true
\begin{equation}
\label{eq-weak solution}
-\int_{\Om_T}u\varphi_t dxdt +\int_{\Om_T}\langle {\bf a}(Du, x,t), D\varphi\rangle dxdt = \int_{\Om_T}\varphi d\mu,
\end{equation} 
for every test function $\varphi \in C^\vc(\Om_T)$ vanishing in a neighborhood of $\partial_p\Om_T$.
\end{defn}
\begin{rem}
	Due to the lack of regularity with respect to the time variable, the weak solution $u$ to the problem \eqref{ParabolicProblem} could not be choosen as a test function in the formula \eqref{eq-weak solution}. In order to overcome this trouble, we make use of the Steklov averages or the standard mollifiers. For further details, we refer to, for example, \cite{B, P2}.
\end{rem}

In general, it is not clear whether the weak solution to the equation \eqref{ParabolicProblem} exists. For this reason, the notion of SOLA (Solution Obtained as Limits of Approximation) will be employed in this situation. For the sake of convenience, we sketch the ideas of an approximation scheme in \cite{BG, BG2,B.etal}. For each $k\in \mathbb{N}$, we consider the regularized problem
  \begin{equation}\label{RegularizedParabolicProblem}
  \left\{
  \begin{aligned}
  &(u_k)_t-\di \mathbf{a}(D u_k,x,t)=\mu_k&\text{in}& \quad \Om_T,\\
  &u_k=0 \quad &\text{on}& \quad \partial_p\Om_T,
  \end{aligned}\right.
  \end{equation}
  where $\mu_k\in C^\vc(\Om_T)$ converges to $\mu$ in the weak sense of measure and
  $$
  |\mu_k|(Q_R\cap \Om_T)\leq |\mu|(Q_R\cap \Om_T), \ \ \ k\geq 1, R>0.
  $$
  As a classical result, the equation \eqref{RegularizedParabolicProblem} admits a weak solution $u_k\in C(0,T; L^2(\Om))\cap L^p(0, T; W^{1,p}_0(\Om))$ for each $k$. Moreover, it was proved in \cite{B.etal} that there exists $u$ so that $u_k \to u$ in $L^{q}(0, T; W^{1,q}_0(\Om))$ for any $q\in [1,p-1+\f{1}{n+1})$. By this reason, the limit of approximation solution $u$ is refered to SOLA (Solution Obtained as Limits of Approximation). In the general case, the  SOLA may not be unique. However, in our situation the uniqueness of SOLA is guaranteed by $\mu\in L^1(\Om_T)$. See for example \cite{Da}.
 
  Let $0<\theta\leq n+2$, we say that the measure $\mu$ is in the Morrey space $ L^{1,\theta}(\Om_T)$ if the following holds true:
  $$
  \sup_{z\in \Om_T}\sup_{0<r\leq {\rm diam} \Om_T}\f{|\mu|(Q_r(z)\cap \Om_T)}{|Q_r(z)\cap \Om_T|^{1-\f{\theta}{n+2}}}<\vc,
  $$
  where $Q_r(z)=B_r(x)\times (t-r^2,t+r^2)$ with $z=(x,t)$ and $B_r(x)=\{y\in \Rn: |x-y|<r\}$.
   
 The nonlinear elliptic and parabolic equations with measure data have received a great deal of attention by many mathematicians. See for example \cite{Be.etal, B.etal, BG, BG2, DM1, DM2, KM, MV, Min1, Min2, Min3} and the references therein. One of the most interesting problems concerning the SOLAs to the equation \eqref{ParabolicProblem} is the Marcinkiewicz type estimate. More precisely, we look for suitable conditions on the nonlinearity ${\bf a}$ and the domain $\Om$ so that the following implication holds true
    \begin{equation}\label{eq-Mac Estimates}
    \mu\in L^{1,\theta}(\Om_T), \ \ \theta\in (1,n+2] \ \ \ \Longrightarrow |Du|\in\mathcal{M}^{m}(\Om_T)
    \end{equation}
    for some $m=m(p,\theta)$,  where $\mathcal{M}^{m}(\Om_T)$ is the weak-Lebesgue space, or the Marcinkiewicz space, defined by the set of all measurable functions $f$ on $\Om_T$ satisfying
 $$
 \|f\|_{\mathcal{M}^{m}(\Om_T)}:=\sup_{\lambda>0} \lambda|\{z\in \Om_T: |f(z)|>\lambda\}|^{\f{1}{m}}<+\vc.
 $$
 The usual modification is used to define the Marcinkiewicz space on any measurable subset $E\subset \Om_T$.

 In \cite{Min1}, the local Marcinkiewicz type estimates \eqref{eq-Mac Estimates} were obtained for the elliptic equations with Morrey data:
 $$
 \mu\in L^{1,\theta}(\Om), 2\leq \theta\leq n \Longrightarrow |Du|\in \mathcal{M}_{\rm loc}^{\f{\theta(p-1)}{\theta-1}}(\Om).
 $$
 Note that when $\theta=n$, the above estimate reads 
  $$
  \mu\in L^{1,n}(\Om) \Longrightarrow |Du|^{p-1}\in \mathcal{M}_{\rm loc}^{\f{n}{n-1}}(\Om),
  $$  
 which was proved in \cite{B.etal, BG} for $p<n$. The borderline case $p=n$ is much more difficult and was investigated in \cite{DHM}.
 
 For the parabolic equation, the local version of Marcinkiewicz type estimates \eqref{eq-Mac Estimates} for $p=2$ was obtained in \cite{BH} by making use of the maximal function technique. The case $p\geq 2$ is more complicated and has been studied recently in \cite{Ba}. More precisely, the author in  \cite{Ba} proved that there exists $\tilde{\theta}\in (1,2)$ so that 
 $$
 \mu\in L^{1,\theta}(\Om_T), \ \ \theta\in (\tilde{\theta},n+2] \ \ \ \Longrightarrow |Du|\in\mathcal{M}_{\rm loc}^{m}(\Om_T), \ \ m=p-1+\f{1}{\theta-1}.
 $$
 The number $\tilde{\theta}\in (1,2)$ is a threshold and has a connection with the exponent in higher integrability estimates of the associated homogeneous equation. It is also claimed in \cite{Ba} that the range $\theta\in (\tilde{\theta},n+2]$ can be improved to be $\theta\in (1,n+2]$ if either ${\bf a}(\xi,x,t)=b(x)a(\xi,t)$ and $b(\cdot)$ satisfies certain VMO regularity conditions, or ${\bf a}(\xi,x,t)$ is continuous with respect to $x$ with some additional smoothness conditions.

This paper is devoted to the global Marcinkiewicz estimates \eqref{eq-Mac Estimates} with the general class of nonlinearities ${\bf a}$ and the non-smooth domains. Our main result is the following theorem.
\begin{thm}\label{mainthm1}
	For any $1<\theta\leq n+2$, there
	exists a positive constant $\delta$ such that the following holds. If $\mu\in L^{1,\theta}(\Om_T)$, the domain $\Omega$ is a $(\delta, R_0)$-Reifenberg flat domain (see Definition \ref{defn2}), and the nonlinearity ${\bf a}$  satisfies \eqref{eq1-functiona}, \eqref{eq2-functiona} and the small $(\delta, R_0)$-BMO condition \eqref{eq2-Assumption} (see Definition \ref{defn-smallBMO} for \eqref{eq2-Assumption}), then the problem \eqref{ParabolicProblem} has a unique SOLA $u$ such that 
	\begin{equation}\label{eq-mainthm}
	\||Du|\|_{\mathcal{M}^m(\Om_T)}\leq C\Big[|\mu|(\Om_T)^{\f{n}{n+1}}+1\Big], \ \  m=p-1+\f{1}{\theta-1}, 
	\end{equation}
	where $C$ is a constant depending on $n, \Lambda_1,\Lambda_2, \delta, R_0$ and $\Om_T$.
\end{thm}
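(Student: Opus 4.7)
The plan is to first establish the estimate uniformly for the smooth approximants $u_k$ solving \eqref{RegularizedParabolicProblem}, and then pass to the limit using the convergence $u_k\to u$ in $L^q(0,T;W^{1,q}_0(\Om))$ recalled above. Because the equation has $p$-growth with $p\geq 2$, the natural geometry is DiBenedetto's intrinsic parabolic cylinders $Q_r^\lambda(z)=B_r(x)\times(t-\lambda^{2-p}r^2,t+\lambda^{2-p}r^2)$, on which a rescaling of the equation behaves like the heat equation. The core of the argument is a good-$\lambda$ inequality relating the superlevel sets of $|Du_k|$ to those of a parabolic maximal function of $|\mu_k|$, obtained from a chain of three comparison estimates on each intrinsic cylinder.

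On each such cylinder I would carry out three successive comparisons. First, I compare $u_k$ with the unique weak solution $v$ of the homogeneous problem $v_t-\di\,{\bf a}(Dv,x,t)=0$ sharing the lateral data of $u_k$ on $\partial_p(Q_r^\lambda\cap\Om_T)$; testing the difference equation in the Boccardo--Gallou\"et style gives a control of $|D(u_k-v)|^{p-1}$ in terms of $|\mu_k|(Q_r^\lambda)/r^{n+1}$. Second, I freeze the spatial variable on $B_r$ and replace ${\bf a}(\xi,x,t)$ by its average $\bar{\bf a}(\xi,t)=\f{1}{|B_r|}\int_{B_r}{\bf a}(\xi,y,t)\,dy$: the small $(\delta,R_0)$-BMO condition \eqref{eq2-Assumption} controls $|Dv-Dw|^p$, where $w$ solves the $x$-independent equation with the same lateral data. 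Third, for cylinders touching $\partial\Om$, the $(\delta,R_0)$-Reifenberg flatness lets one flatten the boundary on scale $r$, so that the known boundary Lipschitz / higher-integrability estimates for the $x$-independent parabolic equation on a half-space yield $\|Dw\|_{L^\vc(Q_{r/2}^\lambda)}\lesi \big(\f{1}{|Q_r^\lambda|}\int_{Q_r^\lambda}|Dw|^p\big)^{1/p}$; interior cylinders use the analogous interior estimate of DiBenedetto.

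Combining the three comparisons, together with a stopping-time Vitali selection on the level set $\{|Du_k|>\lambda\}$, produces, for suitable structural constants $N$ large and $\epsilon$ small, a good-$\lambda$ inequality
\begin{equation*}
|\{|Du_k|>N\lambda\}\cap\Om_T|\leq C\epsilon\,|\{|Du_k|>\lambda\}\cap\Om_T|+|\{\mathcal{M}(|\mu_k|)>\epsilon\lambda^{p-1}\}\cap\Om_T|,
\end{equation*}
where $\mathcal{M}$ is the parabolic Hardy--Littlewood maximal operator on $\Om_T$. Summing over dyadic values of $\lambda$ (absorbing the first term thanks to $\epsilon$ small) converts this into
\begin{equation*}
\||Du_k|\|_{\mathcal{M}^m(\Om_T)}\lesi \|\mathcal{M}(|\mu_k|)^{1/(p-1)}\|_{\mathcal{M}^m(\Om_T)}+1.
\end{equation*}
The Morrey hypothesis $\mu\in L^{1,\theta}$ combined with $|\mu_k|(Q_r)\leq |\mu|(Q_r)$ gives $|\mu_k|(Q_r)\lesi r^\theta$ uniformly in $k$, hence $\mathcal{M}(|\mu_k|)\in \mathcal{M}^{(n+2)/(n+2-\theta)}$; a direct computation identifies the resulting exponent for $|Du_k|$ as $m=p-1+\f{1}{\theta-1}$. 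Letting $k\to\vc$ via Fatou and the a.e. convergence of $Du_k$ delivers \eqref{eq-mainthm}.

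The main obstacle I expect is the coherent management of the two smallness parameters together with the intrinsic-scaling selection: the BMO seminorm of ${\bf a}$ and the Reifenberg constant $\delta$ must be fixed small depending on $\theta$, while simultaneously the intrinsic cylinders in the Vitali cover are defined through $\lambda\sim |Du_k|$, so that all three comparison steps, the flattening of $\partial\Om$ and the frozen-coefficient replacement must be executed on the same family of cylinders chosen by a stopping-time argument of Acerbi--Mingione type. This compatibility between the stopping-time selection and the geometric/structural approximations, together with the fact that the intrinsic geometry depends on the unknown $|Du_k|$ itself, is the most delicate technical point of the proof.
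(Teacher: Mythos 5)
Your comparison-chain scaffolding (homogeneous reference solution $w$, frozen-coefficient limit $v$, boundary flattening on the Reifenberg scale, Vitali/stopping-time selection on intrinsic cylinders) is faithful to the paper's Sections 3 and 4, and the regularization $u_k$ with a passage to the limit is also what the paper uses inside its propositions. The gap is in the final step, where you pass from the comparison estimates to a weak-type bound via the parabolic Hardy--Littlewood maximal function $\mathcal{M}(|\mu_k|)$ and a good-$\lambda$ inequality with threshold $\epsilon\lambda^{p-1}$.

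This is precisely the route the paper explicitly declines to take for $p\neq 2$, and for good reason: the comparison Lemma \ref{lem1-inter} controls $\fint_{Q_r^\lambda}|D(u-w)|^q$ by $\big[|\mu|(Q_r^\lambda)/|Q_r^\lambda|^{(n+1)/(n+2)}\big]^{(n+2)/(p+(p-1)n)}$, a quantity living on the \emph{intrinsic} cylinders $Q_r^\lambda$ whose time-scale depends on the unknown level $\lambda$; it cannot be rewritten as (a power of) a fixed-geometry maximal function without incurring spurious powers of $\lambda^{p-2}$. Moreover the averaged quantity involved is of fractional maximal type ($|\mu|(Q)/|Q|^{(n+1)/(n+2)}\sim r\,|\mu|(Q)/|Q|$), not $\mathcal{M}(\mu)$. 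Your exponent arithmetic reflects this: from $\||Du_k|\|_{\mathcal{M}^m}\lesi\|\mathcal{M}(|\mu_k|)^{1/(p-1)}\|_{\mathcal{M}^m}+1$ and $\mathcal{M}(|\mu_k|)\in\mathcal{M}^{(n+2)/(n+2-\theta)}$ one would obtain $m=(p-1)(n+2)/(n+2-\theta)$, which does \emph{not} equal $p-1+\f{1}{\theta-1}$ except at isolated parameter values (try $p=2$, $\theta=2$, $n=3$). Even repaired with the fractional maximal operator $\mathcal{M}_1(\mu)\in\mathcal{M}^{\theta/(\theta-1)}$, tracking the intrinsic scaling gives $m=\f{\theta}{\theta-1}\cdot\f{p+(p-1)n}{n+2}$, again wrong for $p>2$. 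In addition, your definition ``$|\mu_k|(Q_r)\lesi r^\theta$'' misreads the paper's Morrey condition: with the normalization used here it gives $|\mu|(Q_r)\lesi r^{n+2-\theta}$, not $r^\theta$.

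What the paper actually does to avoid all this is to absorb the measure into the stopping functional itself, setting
$$G_z(r)=\fint_{K_r^\lambda(z)}|Du|^{p-1}\,dz+\Big[\f{1}{\delta}\f{|\mu|(K_r^\lambda(z))}{|K_r^\lambda(z)|}\Big]^{\f{p-1}{m}},$$
with the exponent $\f{p-1}{m}$ calibrated so that the stopping radius is coherent with both the intrinsic scaling and the Morrey decay (this is the computation \eqref{eq-mu}). Cylinders where the $\mu$-term dominates (the index set $\mathcal{N}$) are discarded directly, contributing only $c\lambda^{-m}|\mu|(K_{s_2R}(z_0))$ to the level-set estimate; no maximal operator of $\mu$ is ever invoked. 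The remaining absorption is also not a dyadic good-$\lambda$ sum over $\Om_T$: because the stopping cylinders are only guaranteed inside a slightly larger region, the inequality reads $|E_{s_1}(N_0\lambda)|\leq\epsilon|E_{s_2}(\lambda/4)|+c\lambda^{-m}|\mu|$, and one needs the iteration lemma (Lemma \ref{HF'sLemma}) over the nested scales $s_1<s_2$ together with the truncation $|Du|_k=\min(k,|Du|)$ to make the absorption legitimate a priori. Finally, to get the explicit right-hand side $|\mu|(\Om_T)^{n/(n+1)}+1$ one must control $\lambda_0$ via the a priori bound $\|Du\|_{L^{p-1}(\Om_T)}\lesi|\mu|(\Om_T)^{(n+1)/(n(p-1))}$; this last ingredient is missing from your outline. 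In short: your comparison machinery is the right one, but the concluding maximal-function good-$\lambda$ step is incompatible with the degenerate intrinsic geometry and produces the wrong Marcinkiewicz exponent; the replacement is the measure-dependent stopping functional and the $\mathcal{M}/\mathcal{N}$ splitting, combined with the scale-iteration lemma rather than a dyadic sum.
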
 
\begin{rem}
	\noindent (a) In Theorem \ref{mainthm1}, we are only interested in $\theta\in (1,n+2]$. The case $\theta\in (0,1]$ can be deduced immediately from the estimate for $\theta\in (1,n+2]$. Indeed, if $\mu \in L^{1,\tilde{\theta}}(\Om_T)$ for some $\tilde{\theta}\in (0,1]$, then from the definition we have $L^{1,\theta}(\Om_T)$ for any $\theta\in (1,n+2]$. Applying Theorem \ref{mainthm1} and letting $\theta\to 1^+$, we obtain $|Du|\in \mathcal{M}^q(\Om_T)$ for any $p-1<q<\vc$. Hence, $|Du|\in L^q(\Om_T)$ for any $p-1<q<\vc$.\\
	
	\noindent (b) It is not clear whether the exponent $\f{n}{n+1}$ on the right hand side of \eqref{eq-mainthm} is optimal. This problem is, of course, interesting in its own right, but we do not pursue it in this paper. 
\end{rem}

It is important to stress that although the local Marcinkiewicz estimates have been investigated intensively for elliptic and parabolic equations, (see for example  \cite{Min1, Ba} and the references therein), the global Marcinkiewicz estimates have not been obtained. Hence, the result in Theorem \ref{mainthm1} gives a new result on the global  Marcinkiewicz estimate for nonlinear parabolic equations with measure data. We note that in Theorem \ref{mainthm1}, we require neither continuity conditions of the nonlinearity ${\bf a}$, nor smoothness conditions on the boundary $\partial \Om$. See Section 2 for further discussion on these two conditions.\\

We now give some comments on the technique used in this paper. In the particular case $p=2$, the Marcinkiewicz estimate can be otained by using maximal function techniques. See for example \cite{BH}. However, this harmonic analysis tool does not work well for the case $p\neq 2$, mainly because the homogeneity of the parabolic equations is no longer true as $p\neq 2$, even when $\mu\equiv 0$. To overcome this trouble, we adapt the technique introduced in  \cite{AM, AM1} which makes use of the approximation method in \cite{CP} and the Vitali covering lemma. This method is an effective tool in studying the general nonlinear parabolic equations. See for example \cite{AM1, AM, Ba, BR1}. \\

\medskip

The organization of the paper is as follows. In Section 2, we give the assumptions used in the paper. Some important approxiation results for the solution to the problem \eqref{ParabolicProblem} are represented in Section 3. The proof of Theorem \ref{mainthm1} is represented in Section 4.
\\

Throughout the paper, we always use $C$ and $c$ to denote positive constants that are independent of the main parameters involved but whose values may differ from line to line. We will write
$A\lesi B$ if there is a universal constant $C$ so that $A\leq CB$ and $A\sim B$ if $A\lesi B$ and $B\lesi A$. We denote by $\mathcal{O}(\texttt{data})$ the small quantity such that $\lim_{\texttt{data}\to 0}\mathcal{O}(\texttt{data})=0$.

\section{Our assumptions}
For $r,\tau, \lambda>0$, $z=(x,t)$ with $x\in \mathbb{R}^n, t>0$, we first introduce some notations which will be used in the paper:
\begin{itemize}
	\item $\Om_T=\Om\times (0,T)$ and $\partial_p \Om_T=(\partial \Om\times [0, T])\cup (\bar \Om\times\{0\})$.
	\item $B_r=\{y: |y|<r\}$, $\Om_r=B_r\cap \Om$, $B_r^+=B_r\cap \{y=(y_1,\ldots,y_n): y_n>0\}$, and $B_r(x)=x+B_r$, $\Om_r(x)=x+\Om_r$, $B^+_r(x)=x+B^+_r$.
	\item $Q_{r,\tau}=B_r\times (-\tau, \tau)$, $Q_{r,\tau}(z)=z+Q_{r,\tau}$, $K_{r,\tau}(z)=Q_{r,\tau}(z)\cap \Om_T$.
	\item $Q_{r}=Q_{r,r^2}$, $Q_{r}^+=Q_{r}\cap\{z=(x',x_n,t): x_n>0\}$, and  $Q_{r}(z)=z+Q_{r}$, $Q^+_{r}(z)=z+Q^+_{r}$.
	\item $\partial_p Q_r=\partial Q_r\backslash (\bar B_r\times \{r^2\})$, $\partial_p Q_r(z)=z+\partial_p Q_r$
	\item $K_{r}(z)=Q_{r}(z)\cap \Om_T$, $\partial_w K_r(z)=Q_r(z)\cap (\partial \Om\times \mathbb{R})$, $\partial_p K_r(z)=\partial K_r(z)\backslash (\Om_r(x)\times \{t+r^2\})$. 
	\item $I^{\lambda}_{r}(t)=(t-\lambda^{2-p}r^2, t+\lambda^{2-p}r^2)$, $Q_r^\lambda(z)=B_r(x)\times I^{\lambda}_{r}(t)$, $\partial_p Q^\lambda_r(z)=\partial Q^\lambda_r(z)\backslash (\bar B_r(x)\times \{t+\lambda^{2-p}r^2\})$.
	\item $K_{r}^\lambda(z)=Q^\lambda_{r}(z)\cap \Om_T$, $\partial_w K_r^\lambda(z)=Q_r^\lambda(z)\cap (\partial \Om\times \mathbb{R})$, $\partial_p K^\lambda_r(z)=\partial K^\lambda_r(z)\backslash (\bar\Om_r(x)\times \{t+\lambda^{2-p}r^2\})$.	
	\item \noindent For a measurable function $f$ on a measurable subset $E$ in $\mathbb{R}^{n}$ (or in $\mathbb{R}^{n+1}$) we define
	$$
	\overline{f}_E =\fint_E f =\f{1}{|E|}\int_E f.
	$$
\end{itemize}

\subsection{The small {\rm BMO}-seminorm condition}
Assume that the nonlinearity ${\bf a}$ satisfy \eqref{eq1-functiona} and \eqref{eq2-functiona}. We set
$$
\Theta(\mathbf{a},B_r(y))(x,t)=\sup_{\xi\in \mathbb{R}^n\backslash \{0\}}\f{|\mathbf{a}(\xi,x,t)-\overline{\mathbf{a}}_{B_r(y)}(\xi,t)|}{|\xi|^{p-1}}
$$
where
$$
\overline{\mathbf{a}}_{B_r(y)}(\xi,t)=\fint_{B_r(y)}\mathbf{a}(\xi,x,t)dx.
$$
\begin{defn}\label{defn-smallBMO}
	Let $R_0,\delta>0$. The nonlinearity ${\bf a}$ is said to satisfy the small $(\delta, R_0)$-BMO condition if 
	\begin{equation}\label{eq2-Assumption}
	[{\bf a}]_{2,R_0}:=\sup_{y\in \mathbb{R}^n}\sup_{0<r\leq R_0, 0<\tau<r^2}\,\fint_{Q_{(r,\tau)}(y)}|\Theta(\mathbf{a},B_r(y))(x,t)|^2dxdt\leq \delta^2.
	\end{equation}
\end{defn}
\begin{rem}
	\noindent (a) The nonlinearity ${\bf a}$ satisfying the small $(\delta, R_0)$-BMO condition \eqref{eq2-Assumption} is assumed to be merely measurable only in the time variable $t$ and belong to the  BMO class (functions with bounded mean oscillations) as functions of the spatial variable $x$. To see this, we consider the following example. If  ${\bf a}(\xi,x,t)=b(\xi,x)c(t)$, then \eqref{eq2-Assumption} requires small BMO norm regularity for $b(\xi,\cdot)$, whereas $c(\cdot)$ is just needed to be bounded and measurable. This is weaker than those used in \cite{BR1, BW2} in which the nonlinearity ${\bf a}$ is required to belong to the  BMO class in both variables $t$ and $x$. Note that the condition \eqref{eq2-Assumption} is similar to that used in \cite{K} to study the parabolic and elliptic equations with VMO coefficients. We refer to \cite{Sa} for the definition of VMO functions.

	\noindent (b) Under the conditions \eqref{eq1-functiona}, \eqref{eq2-functiona} and the small $(\delta, R_0)$-BMO condition \eqref{eq2-Assumption}, it is easy to see that for any $\gamma\in [1,\vc)$ there exists $\epsilon>0$ so that 
	$$
	[{\bf a}]_{\gamma,R_0}
	:=\sup_{y\in \mathbb{R}^n}\sup_{0<r\leq R_0, 0<\tau<r^2}\,\fint_{Q_{(r,\tau)}(y)}|\Theta(\mathbf{a},B_r(y))(x,t)|^\gamma dxdt\lesi \delta^\epsilon.
	$$
\end{rem}
\subsection{Reifenberg flat domains}
Concerning  the underlying domain $\Omega$, we do not assume any smoothness condition on $\Omega$, but the following  flatness condition.
\begin{defn}\label{defn2}
	Let $\delta, R_0>0$. The domain $\Om$ is said to be a $(\delta, R_0)$ Reifenberg flat domain if for every $x\in \partial\Om$ and $0<r\leq R_0$, then there exists a coordinate system depending on $x$ and $r$, whose variables are denoted by $y=(y_1,\dots,y_n)$ such that in this new coordinate system $x$ is the origin and
	\begin{equation}\label{eq1-Assumption}
	B_{r}\cap\{y: y_n>\delta r\}\subset B_{r}\cap \Omega \subset \{y: y_n>-\delta r\}.
	\end{equation}
\end{defn}

\begin{rem}\label{rem1}
	(a) The condition of  $(\delta, R_0)$-Reifenberg flatness condition was first introduced in \cite{R}. This condition does not require any smoothness on the boundary of $\Om$, but
	sufficiently flat in the Reifenberg's sense. The Reifenberg flat domain includes domains with rough boundaries of fractal
	nature, and Lipschitz domains with small Lipschitz constants.
	For further discussions about the Reifenberg domain, we refer to \cite{R, DT, Toro, P} and the references therein. 
	
	(b) If $\Om$ is a $(\delta, R_0)$ Reifenberg domain, then for any $x_0\in \partial \Om$ and $0<\rho<R_0(1-\delta)$ there exists a coordinate system, whose variables are denoted by $y=(y_1,\ldots, y_n)$ such that in this coordinate system the origin is some interior point of $\Om$, $x_0=(0,\ldots, 0, -\f{\delta \rho}{1-\delta})$ and
	$$
	B_{\rho}^+\subset B_{\rho}\cap \Om\subset B_{\rho}\cap  \left\{y: y_n>-\f{2\delta \rho}{1-\delta}\right\}.
	$$

	(c) For $x\in \Om$ and $0<r<R_0$, we have
	\begin{equation}
	\label{eq1-Reifenberg domain}
	\f{|B_r(x)|}{|B_r(x)\cap \Om|}\leq \Big(\f{2}{1-\delta}\Big)^n.
	\end{equation}
\end{rem}

\bigskip

{\it Throughout the paper, we always assume that the domain $\Om$ is a $(\delta, R_0)$ Reifenberg flat domain, and the nonlinearity ${\bf a}$ satisfies \eqref{eq1-functiona}, \eqref{eq2-functiona} and the small $(\delta, R_0)$-BMO condition \eqref{eq2-Assumption}.}
\subsection{Sobolev-Poincar\'e inequality on Reifenberg domains}

Let $1<p<\vc$ and $E$ be a compact subset in $\Om$. The $p$-capacity of a compact set $E$ which is denoted by $C_p(E,\Om)$ is defined by
$$
C_p(E,\Om)=\inf\left\{\int_{\Om}|Dg|^pdx: g\in C^\vc_0(\Om), g=1 \ \text{in} \ \ E\right\}.
$$  
It is well known that for $1<p<\vc$ and $r>0$,
\begin{equation}\label{cap ball}
C_p(\overline{B}_r, B_{2r})=cr^{n-p},
\end{equation}
where $c$ depends on $n$ and $p$. See for example \cite{KK,Ma}.

\begin{lem}
	\label{SP inequality}
	Suppose that $1<q<\vc$ and that $u$ is a $q$-quasicontinuous function in $W^{1,q}(B)$, where $B$ is a ball. Let $N_B(u)=\{x\in B: u(x)=0\}$. Then
	$$
	\Big(\fint_B|u|^{\kappa q}dx\Big)^{\f{1}{\kappa q}}\leq c\Big(\f{1}{C_q(N_B(u),2B)}\int_B|\nabla u|^qdx\Big)^{1/q},
	$$
	where $c = c(n, q) > 0$ and $\kappa = n/(n - q)$ if $1 <q < n$ and $\kappa = 2$ if $q\geq n$.
\end{lem}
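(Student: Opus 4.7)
The plan is to combine the classical Sobolev--Poincar\'e inequality on $B$, which controls the oscillation of $u$ about its mean $u_B := \fint_B u\,dx$, with a capacitary estimate that in turn controls $|u_B|$ by exploiting that $u$ vanishes on the set $N_B(u)$.

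First, I would apply the standard Sobolev--Poincar\'e inequality on the ball $B$ of radius $r$, in the form
\[
\Bigl(\fint_B |u - u_B|^{\kappa q}\,dx\Bigr)^{1/(\kappa q)} \leq C r \Bigl(\fint_B |\nabla u|^q\,dx\Bigr)^{1/q}.
\]
This is classical in each regime: the usual Sobolev--Poincar\'e inequality for $1<q<n$ with $\kappa=n/(n-q)$; Trudinger's inequality (which gives arbitrary finite exponents, in particular $\kappa q = 2q$) for $q=n$; and Morrey's theorem (which gives an $L^\infty$, hence $L^{2q}$, bound) for $q>n$.

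Next, assuming $u_B\neq 0$ (otherwise the above inequality already yields the lemma), I would set $f := 1 - u/u_B$. This function lies in $W^{1,q}(B)$, inherits $q$-quasicontinuity from $u$, has mean zero on $B$, and equals $1$ quasi-everywhere on $N_B(u)$. After a Sobolev extension $\tilde f\in W^{1,q}(\mathbb{R}^n)$ satisfying $\|\tilde f\|_{L^q(2B)}+r\|\nabla\tilde f\|_{L^q(2B)}\lesssim r\|\nabla f\|_{L^q(B)}$ (using Poincar\'e on the mean-zero $f$ to absorb $\|f\|_{L^q(B)}$) and multiplication by a cutoff $\eta\in C_0^\infty(2B)$ with $\eta\equiv 1$ on $B$ and $|\nabla\eta|\lesssim 1/r$, the function $\eta\tilde f$ serves as an admissible competitor for $C_q(N_B(u),2B)$. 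Here one uses the standard fact that the Sobolev $q$-capacity may be computed using $W^{1,q}_0(2B)$ functions equal to $1$ quasi-everywhere on the target set, which is precisely where the quasicontinuity hypothesis enters. A direct expansion, combined with Poincar\'e on $f$, yields
\[
C_q(N_B(u),2B) \leq C\int_{2B}|\nabla(\eta\tilde f)|^q\,dx \leq \frac{C}{|u_B|^q}\int_B|\nabla u|^q\,dx,
\]
i.e.\ $|u_B|^q\leq C\,C_q(N_B(u),2B)^{-1}\int_B|\nabla u|^q\,dx$.

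Finally, combining the two inequalities through $\|u\|_{L^{\kappa q}(B)}\leq \|u-u_B\|_{L^{\kappa q}(B)}+|u_B||B|^{1/(\kappa q)}$ and absorbing the first term by means of the universal upper bound $C_q(N_B(u),2B)\leq C_q(\overline B,2B)= c r^{n-q}$ supplied by \eqref{cap ball}, one recovers the claimed inequality after dividing by $|B|^{1/(\kappa q)}$; scale invariance makes the dependence on $r$ cancel correctly. The main obstacle is the capacity step: one must carefully justify that $\eta\tilde f$ is genuinely admissible in the infimum defining $C_q$, i.e.\ that a density-and-quasicontinuity argument reduces admissibility from the $C_0^\infty(2B)$ functions with $g\equiv 1$ pointwise on $N_B(u)$ appearing in the definition to $W^{1,q}_0(2B)$ functions with $g=1$ quasi-everywhere---a classical result from nonlinear potential theory that uses exactly the $q$-quasicontinuity of $u$.
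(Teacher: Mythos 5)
The paper does not prove this lemma at all; it is invoked as a classical capacitary Sobolev--Poincar\'e inequality from nonlinear potential theory, with the sources \cite{KK,Ma} cited just above in connection with \eqref{cap ball}. Your argument reconstructs the standard proof of that result correctly: split $u$ into its oscillation about the mean (controlled in each regime of $q$ by the ordinary Sobolev--Poincar\'e, Trudinger, or Morrey inequality) plus the constant $u_B$, bound $|u_B|$ by exhibiting $\eta\tilde f$, built from $1-u/u_B$ via Sobolev extension and a cutoff, as a competitor for $C_q(N_B(u),2B)$, and reconcile the two contributions using the monotonicity bound $C_q(N_B(u),2B)\leq C_q(\overline B,2B)=c\,r^{n-q}$ supplied by \eqref{cap ball}. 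You also correctly isolate the one place where $q$-quasicontinuity is actually needed --- the equivalence between the $C_0^\infty$ infimum defining $C_q$ in the paper and the infimum over $W^{1,q}_0(2B)$ functions equal to $1$ quasi-everywhere on $N_B(u)$ --- which is indeed the only nontrivial potential-theoretic input.
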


In the particular case when $\Om$ is a Reifenberg flat domain, we have the following result.
\begin{lem}
	\label{SP inequality}
	Let $\Om$ is a $(\delta, R_0)$ Reifenberg domain. Suppose that $1<q<\vc$ and that $u$ is a $q$-quasicontinuous function in $W^{1,q}(\Om_r(x_0))$, where $x_0\in \partial\Om$ and $0<r<R_0$. Then
	\begin{equation}\label{SP-inequality 1}
	\Big(\fint_{\Om_r(x_0)}|u|^{\kappa q}dx\Big)^{\f{1}{\kappa q}}\leq cr\Big(\fint_{B_r(x_0)}|\nabla \bar{u}|^qdx\Big)^{1/q},
	\end{equation}
	where $c = c(n, q) > 0$ and $\kappa = n/(n - q)$ if $1 <q < n$ and $\kappa = 2$ if $q\geq n$, and $\bar{u}$ is the zero extension of $u$ from $\Om_r(x_0)$ to $B_r(x_0)$.
	
	In particularly, we have
	\begin{equation}\label{SP-inequality 2}
	\Big(\fint_{\Om_r(x_0)}|u|^{q}dx\Big)^{\f{1}{q}}\leq cr\Big(\fint_{B_r(x_0)}|\nabla \bar{u}|^qdx\Big)^{1/q}.
	\end{equation}
\end{lem}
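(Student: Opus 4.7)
The plan is to obtain the Reifenberg version by applying the earlier capacity-based Sobolev--Poincar\'e inequality (the first Lemma \ref{SP inequality}) to the zero extension $\bar u$ on the full ball $B_r(x_0)$, and then estimating the $q$-capacity of its zero set from below via the Reifenberg flatness of $\Omega$ at the boundary point $x_0$.

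First, I would set $B=B_r(x_0)$ and apply the previous lemma to $\bar u\in W^{1,q}(B)$, noting that $\bar u$ is $q$-quasicontinuous on $B$ (since it is zero on the exterior of $\Omega$ and coincides with the quasicontinuous representative of $u$ inside). This yields
\begin{equation*}
\Big(\fint_{B_r(x_0)}|\bar u|^{\kappa q}dx\Big)^{1/(\kappa q)}
\leq c\Big(\frac{1}{C_q(N_{B}(\bar u),2B)}\int_{B_r(x_0)}|\nabla \bar u|^{q}dx\Big)^{1/q}.
\end{equation*}
Since $\bar u\equiv 0$ on $B_r(x_0)\setminus \Omega$, the zero set $N_B(\bar u)$ contains $B_r(x_0)\setminus\Omega$.

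Next, I would use Definition \ref{defn2}: in the Reifenberg coordinates centered at $x_0$, we have $B_r\cap\Omega\subset\{y_n>-\delta r\}$, so $B_r(x_0)\setminus\Omega$ contains the region $B_r\cap\{y_n<-\delta r\}$. For $\delta$ small enough, this region contains a closed ball $\overline{B_{\rho}(y_0)}$ with $\rho\sim r$ and $B_{2\rho}(y_0)\subset 2B_r(x_0)$. By monotonicity of capacity and the scaling identity \eqref{cap ball},
\begin{equation*}
C_q(N_B(\bar u),2B)\;\geq\; C_q(\overline{B_\rho(y_0)}, B_{2\rho}(y_0))\;=\;c\,\rho^{\,n-q}\;\geq\;c\,r^{\,n-q}.
\end{equation*}
Substituting this bound and rewriting $\int_{B_r(x_0)}=|B_r(x_0)|\fint_{B_r(x_0)}$ produces the factor $r$ on the right-hand side. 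Finally, Remark \ref{rem1}(c) gives $|B_r(x_0)|\leq C|\Omega_r(x_0)|$, which lets me replace the left-hand average over $B_r(x_0)$ by the average over $\Omega_r(x_0)$, establishing \eqref{SP-inequality 1}. The second inequality \eqref{SP-inequality 2} then follows immediately from \eqref{SP-inequality 1} and Jensen's inequality, since $\kappa\geq 1$ forces $(\fint|u|^q)^{1/q}\leq (\fint|u|^{\kappa q})^{1/(\kappa q)}$.

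The main obstacle is the capacity lower bound: one must verify that Reifenberg flatness really does leave a ball-shaped hole of radius comparable to $r$ outside $\Omega$, so that the classical capacity-of-a-ball estimate \eqref{cap ball} is applicable. Once $\delta$ is chosen small enough to guarantee that $B_r\cap\{y_n<-\delta r\}$ contains a ball of radius, say, $r/4$, the rest is bookkeeping. All constants depend only on $n,q,\delta$, matching the claim of the lemma.
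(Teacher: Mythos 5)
Your overall strategy matches the paper's own (very terse) proof exactly: extend $u$ by zero, apply the capacity-form Sobolev--Poincar\'e inequality on $B_r(x_0)$, use Reifenberg flatness to locate a ball of radius $\sim r$ in the complement of $\Omega$ inside the zero set, and invoke the capacity-of-a-ball formula \eqref{cap ball} to bound $C_q(N_B(\bar u),2B)$ from below by $cr^{n-q}$. The passage to \eqref{SP-inequality 2} via Jensen/H\"older is also the paper's argument.

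There is, however, one concrete slip in the capacity step. You write that $\overline{B_\rho(y_0)}\subset N_B(\bar u)$ and $B_{2\rho}(y_0)\subset 2B$, and conclude by ``monotonicity'' that $C_q(N_B(\bar u),2B)\geq C_q(\overline{B_\rho(y_0)},B_{2\rho}(y_0))$. But capacity is increasing in the compact set and \emph{decreasing} in the outer domain: enlarging the reference domain admits more test functions and hence lowers the infimum. So from $\overline{B_\rho(y_0)}\subset N_B(\bar u)$ you get $C_q(N_B(\bar u),2B)\geq C_q(\overline{B_\rho(y_0)},2B)$, while from $B_{2\rho}(y_0)\subset 2B$ you get $C_q(\overline{B_\rho(y_0)},2B)\leq C_q(\overline{B_\rho(y_0)},B_{2\rho}(y_0))$ --- the \emph{wrong} direction. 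The correct move is the opposite containment for the outer domain: since $\rho\sim r$ and $|x_0-y_0|<r$, one has $2B=B_{2r}(x_0)\subset B_{K\rho}(y_0)$ for a fixed $K$ depending only on the ratio (e.g.\ $K=12$ if $\rho=r/4$), whence $C_q(\overline{B_\rho(y_0)},2B)\geq C_q(\overline{B_\rho(y_0)},B_{K\rho}(y_0))=c(n,q,K)\rho^{n-q}\geq c\,r^{n-q}$, the scaling identity being the straightforward generalization of \eqref{cap ball} to non-doubled concentric balls. With this correction the rest of your bookkeeping (factoring out $|B_r|\sim r^n$ to produce the factor $r$, and passing from the $B_r(x_0)$-average to the $\Omega_r(x_0)$-average via Remark \ref{rem1}(c)) goes through as stated.
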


\begin{proof}
	The inequality \eqref{SP-inequality 1} follows immediately from the definition of a $(\delta, R_0)$ Reifenberg domain, \eqref{cap ball} and Lemma \ref{SP inequality}.  The inequality \eqref{SP-inequality 2} follows from  \eqref{SP-inequality 1} and H\"older's inequality.
\end{proof}

\section{Interior estimates}
For $z_0=(x_0,t_0)\in \Om_T$, $0<R<R_0/4$ and $\lambda\geq 1$ satisfying $B_{4R}\equiv B_{4R}(x_0)\subset \Om$, we set 
\begin{equation}\label{defn-QR}
Q^\lambda_{4R}\equiv Q_{4R}^\lambda(z_0)=B_{4R}(x_0)\times I^{\lambda}_{4R}(t_0).
\end{equation}

For the sake of simplicity, we may assume that $I^{\lambda}_{4R}(t_0)\subset (0,T)$, or equivalently, $Q^\lambda_{4R}\subset \Om_T$. The case $I^{\lambda}_{4R}(t_0)\cap (0,T)^c\neq \emptyset$ can be done in the same manner with minor modifications.\\

Assume that $u$ is a weak solution to \eqref{ParabolicProblem}. It is well-known that there exists a unique weak solution $w \in C(I^\lambda_{4R}(t_0); L^2(B_{4R}(x_0)))\cap L^p(I^\lambda_{4R}(t_0); W^{1,p}(B_{4R}(x_0)))$ to the following equation
\begin{equation}\label{AppProb1-interior}
\left\{
\begin{aligned}
&w_t-\di\, \mathbf{a}(D w,x,t)=0 \quad &\text{in}& \quad Q^\lambda_{4R},\\
&w=u \quad &\text{on}& \quad \partial_p Q^\lambda_{4R}.
\end{aligned}\right.
\end{equation}

Then we have the following estimate. See Lemma 4.1 in \cite{KM}.
\begin{lem}\label{lem1-inter}
	Let $w$ be a weak solution to the problem \eqref{AppProb1-interior}. Then for every $1\leq q<p-1+\f{1}{n+1}$, there exists $C$ so that
	\begin{equation}
	\label{eq1-lem1-inter}
	\Big(\fint_{Q^\lambda_{4R}}|D(u-w)|^qdxdt\Big)^{1/q}\leq C \left[\f{|\mu|(Q^\lambda_{4R})}{|Q^\lambda_{4R}|^{(n+1)/(n+2)}}\right]^{\f{n+2}{p+(p-1)n}}.
	\end{equation}
\end{lem}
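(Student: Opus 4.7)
The plan is to follow the classical Boccardo--Gallouët truncation strategy in the parabolic setting, adapted to the intrinsic cylinder $Q^\lambda_{4R}$. Since $u$ itself is only a SOLA, I would first work at the level of the approximations $u_k$ from \eqref{RegularizedParabolicProblem} and the corresponding solutions $w_k$ of \eqref{AppProb1-interior} with boundary datum $u_k$ in place of $u$, derive the estimate for $u_k-w_k$ uniformly in $k$, and then pass to the limit using the convergence $u_k\to u$ in $L^q(0,T;W^{1,q}_0(\Om))$ for $q<p-1+\tfrac{1}{n+1}$ recalled in the introduction (together with the corresponding convergence of $w_k$, which follows from standard stability for the homogeneous problem).

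Set $v:=u_k-w_k$ and subtract the equations; after regularizing in time with Steklov averages I would use the truncation $T_\eta(v)=\max(-\eta,\min(\eta,v))$ as a test function. The parabolic term $\int \partial_t v\, T_\eta(v)$ rewrites as the boundary trace of the primitive $\Psi_\eta(v)=\int_0^v T_\eta(s)\,ds$, which is nonnegative and vanishes on $\partial_p Q^\lambda_{4R}$ because $v=0$ there. The monotonicity assumption \eqref{eq2-functiona} then yields
\begin{equation*}
\Lambda_2\int_{\{|v|\leq \eta\}\cap Q^\lambda_{4R}}|Dv|^p\,dxdt
\;\leq\;\int_{Q^\lambda_{4R}}T_\eta(v)\,d\mu_k
\;\leq\;\eta\,|\mu|(Q^\lambda_{4R}).
\end{equation*}
Hence on the level set $\{|v|\leq\eta\}$ the gradient $Dv$ is controlled in $L^p$ by $\eta|\mu|(Q^\lambda_{4R})$.

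To promote this to a global Marcinkiewicz bound on $|Dv|$, I would split
\begin{equation*}
|\{|Dv|>\alpha\}\cap Q^\lambda_{4R}|\;\leq\;|\{|v|>\eta\}\cap Q^\lambda_{4R}|
\;+\;\alpha^{-p}\int_{\{|v|\leq\eta\}}|Dv|^p\,dxdt,
\end{equation*}
control the first piece by the parabolic Sobolev embedding applied to $T_\eta(v)\in L^p(I^\lambda_{4R};W^{1,p}_0(B_{4R}))\cap L^\infty(I^\lambda_{4R};L^2(B_{4R}))$ (which gives an $L^{p(n+2)/n}$-estimate through interpolation, the standard device for measure data problems on intrinsic cylinders), and optimise the free parameter $\eta$ in $\alpha$. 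This is the calculation that produces the sharp exponent $p-1+\tfrac{1}{n+1}$ in the Marcinkiewicz bound for $|Dv|$ and, after integrating the distribution function against $\alpha^{q-1}\,d\alpha$ for $q<p-1+\tfrac{1}{n+1}$, the $L^q$-averaged estimate with the scaling weight $|Q^\lambda_{4R}|^{(n+1)/(n+2)}$ in the denominator.

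The main obstacles I expect are two. The first is justifying that $T_\eta(v)$ is an admissible test function and that the parabolic boundary trace indeed vanishes; this requires the Steklov mollification argument mentioned in the remark after the definition of weak solution, which must be executed on the intrinsic cylinder $Q^\lambda_{4R}$ where the scaling in time is nonstandard. The second, more delicate point is the uniform-in-$k$ passage to the limit: one must ensure that the gradient comparison is stable under $u_k\to u$, using the convergence of $\mu_k$ in total variation together with the a priori $L^q$-bound on $Du_k$ for $q<p-1+\tfrac{1}{n+1}$, so that Fatou's lemma applied in the weak-$L^q$ Marcinkiewicz scale transfers the estimate from $u_k-w_k$ to $u-w$ without loss of the intrinsic scaling factor.
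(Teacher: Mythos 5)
Your proposal is correct and follows essentially the same route as the paper, which does not reprove this lemma but refers to Lemma~4.1 of Kuusi--Mingione~\cite{KM}: the Boccardo--Gallou\"et truncation $T_\eta(v)$ as test function, the resulting energy estimate $\Lambda_2\int_{\{|v|\le\eta\}}|Dv|^p\,dx\,dt\le \eta\,|\mu|(Q^\lambda_{4R})$, the level-set splitting combined with the parabolic Gagliardo--Nirenberg interpolation for $T_\eta(v)$ on the intrinsic cylinder, and the optimization in $\eta$ followed by integrating against $\alpha^{q-1}\,d\alpha$ are precisely the steps used there. Your reduction to the approximating pairs $(u_k,w_k)$ is likewise consistent with how the paper actually invokes this comparison estimate at the level of the regularized problems, e.g.\ in the proof of Proposition~\ref{prop1}.
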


Moreover, we have the following higher integrability property. 
\begin{prop}
	\label{higherInte-Prop1-inter}
	Let $w$ be a weak solution to the problem \eqref{AppProb1-interior}. Assume that 
	\begin{equation}\label{eq- Dw condition}
	\kappa^{-1}\lambda^p\leq \fint_{Q^\lambda_{R}}|Dw|^pdxdt \ \ \text{and} \ \ \fint_{Q^\lambda_{2R}}|Dw|^pdxdt\leq \kappa \lambda^p,
	\end{equation}
	for some $\kappa>1$. Then there exist $\epsilon_0>0$ such that 
	$$
	\Big(\fint_{Q^\lambda_{R}}|Dw|^{p+\epsilon_0}dxdt\Big)^{\f{1}{p+\epsilon_0}}\leq C\fint_{Q^\lambda_{2R}}|Dw|dxdt,
	$$
	where $C$ depends on $n, p, \Lambda_1, \Lambda_2$ and $\kappa$.
\end{prop}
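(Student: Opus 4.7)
The plan is a standard Gehring-type self-improvement at the intrinsic scale, built from a Caccioppoli energy estimate and a parabolic Sobolev--Poincar\'e inequality. The two-sided hypothesis \eqref{eq- Dw condition} will be used throughout to trade powers of $\lambda$ for gradient averages, and it is precisely this exchange that allows the final bound to involve only $L^1$ of $|Dw|$ on the right-hand side rather than some $L^{q_0}$ with $q_0>1$.

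First I would derive a Caccioppoli inequality on nested intrinsic cylinders $Q^\lambda_{\rho_1}\subset Q^\lambda_{\rho_2}\subset Q^\lambda_{2R}$. Testing the weak form of \eqref{AppProb1-interior} against $(w-c)\eta^p$, with $\eta$ a smooth space--time cutoff adapted to the intrinsic geometry and $c$ the mean of $w$ over $Q^\lambda_{\rho_2}$, justifying the time derivative via Steklov averages, and using \eqref{eq1-functiona}--\eqref{eq2-functiona}, one obtains
$$
\sup_{t\in I^\lambda_{\rho_1}(t_0)}\fint_{B_{\rho_1}(x_0)}|w-c|^2\,dx + \fint_{Q^\lambda_{\rho_1}}|Dw|^p\,dz \le \frac{C}{(\rho_2-\rho_1)^p}\fint_{Q^\lambda_{\rho_2}}|w-c|^p\,dz + \frac{C\lambda^{p-2}}{(\rho_2-\rho_1)^2}\fint_{Q^\lambda_{\rho_2}}|w-c|^2\,dz.
$$
The choice of the intrinsic time-length $\lambda^{2-p}\rho^2$ is exactly what makes both right-hand side terms homogeneous of degree $p$ in $\lambda$, which is the reason this geometry is forced upon us.

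Next I would apply a parabolic Sobolev--Poincar\'e inequality of Gagliardo--Nirenberg--DiBenedetto type on the intrinsic cylinders, using the sup-time $L^2$ bound supplied by Caccioppoli as auxiliary input. This allows both oscillation terms above to be dominated by $C\lambda^{p-1}\fint_{Q^\lambda_{\rho_2}}|Dw|\,dz$, up to a term of the form $\theta\sup_{t}\fint_{B_{\rho_2}}|w-c|^2$ that can be absorbed into the left-hand side of Caccioppoli through an iteration over nested radii. Using the lower bound in \eqref{eq- Dw condition} to replace $\lambda^{p-1}$ by $\bigl(\fint_{Q^\lambda_R}|Dw|^p\bigr)^{(p-1)/p}$ and Young's inequality to absorb, one arrives at the reverse H\"older inequality
$$
\left(\fint_{Q^\lambda_R}|Dw|^p\,dz\right)^{1/p} \le C \fint_{Q^\lambda_{2R}}|Dw|\,dz.
$$

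Finally, Gehring's self-improvement lemma, applied to this reverse H\"older inequality on all sub-cylinders of $Q^\lambda_{2R}$ via a Vitali-type covering argument at the intrinsic scale, produces the exponent $\epsilon_0>0$ and the claimed estimate. The main obstacle is the second step: parabolic Sobolev--Poincar\'e for $p>2$ naturally delivers $L^{q_0}$ of $|Dw|$ on the right-hand side with $q_0=np/(n+2)>1$, so pushing $q_0$ down to $1$ is precisely where the intrinsic hypothesis \eqref{eq- Dw condition} must be exploited decisively -- first to convert $\lambda^{p-1}$ back into a gradient power, and then through careful Young-type absorption to eliminate the resulting superlinear gradient factors without reintroducing the very $L^p$-norm one is trying to control on the left.
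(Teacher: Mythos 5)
The paper's own ``proof'' of this Proposition is a single citation to Corollary~4.8 of \cite{Ba}. That result -- and the analogous boundary version proved in Section~4 of this paper -- is built from two separate ingredients: first, a Gehring-type higher integrability $\big(\fint_{Q^\lambda_R}|Dw|^{p+\epsilon_0}\big)^{1/(p+\epsilon_0)}\leq C\big(\fint_{Q^\lambda_{2R}}|Dw|^p\big)^{1/p}+C\lambda$, established \emph{without} using \eqref{eq- Dw condition}, because the reverse H\"older inequality that feeds Gehring (Caccioppoli plus parabolic Sobolev--Poincar\'e, with an additive $\lambda^p$ term and an exponent $q_0<p$ on the right) holds uniformly over all intrinsic sub-cylinders of $Q^\lambda_{2R}$; and second, the bound $\lambda\lesssim\fint_{Q^\lambda_{2R}}|Dw|$, obtained from a one-parameter family of reverse H\"older inequalities with tracked blow-up constants and the Kuusi--Mingione interpolation lemma (Lemma~\ref{lem-KM}) -- this is where \eqref{eq- Dw condition} enters. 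The two pieces then combine.

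Your sketch invokes the right toolkit, but the Gehring step is in the wrong place, and this is a genuine gap. You first derive the $L^1$-to-$L^p$ reverse H\"older $\big(\fint_{Q^\lambda_R}|Dw|^p\big)^{1/p}\leq C\fint_{Q^\lambda_{2R}}|Dw|$ by using the \emph{lower} bound in \eqref{eq- Dw condition} to absorb the superlinear gradient factors, and \emph{then} propose to apply Gehring to that inequality. But Gehring's lemma requires a reverse H\"older that holds over a Vitali family of sub-cylinders $Q^\lambda_r(z)\subset Q^\lambda_{2R}$, whereas \eqref{eq- Dw condition} is a statement about the single pair $(Q^\lambda_R,Q^\lambda_{2R})$: on a small sub-cylinder $Q^\lambda_r(z)$ with $r\ll R$ the average $\fint_{Q^\lambda_r(z)}|Dw|^p$ can be far from $\lambda^p$, so the absorption that produced your $L^1$-to-$L^p$ inequality is simply unavailable there, and you are left with one inequality rather than a sub-cylinder family. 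The order has to be reversed: apply Gehring first, to the $L^{q_0}$ reverse H\"older with the additive $\lambda^p$ term (which does hold on all sub-cylinders), and only afterwards invoke both sides of \eqref{eq- Dw condition} -- the upper bound to collapse $\big(\fint_{Q^\lambda_{2R}}|Dw|^p\big)^{1/p}+\lambda$ to $C\lambda$, and the lower bound together with Lemma~\ref{lem-KM} to convert $\lambda$ into $\fint_{Q^\lambda_{2R}}|Dw|$. Relatedly, ``careful Young-type absorption'' will not by itself push $q_0=np/(n+2)>1$ down to $1$, since Young reintroduces exactly the $L^p$-norm one is trying to remove; what is needed is the interpolation/iteration mechanism with explicitly tracked blow-up constants, as carried out for the boundary case in Proposition~\ref{prop1- imply u p}.
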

\begin{proof}
	We refer to Corollary 4.8 in \cite{Ba} for the proof of the proposition.
\end{proof}
Let $w$ be a weak solution to \eqref{AppProb1-interior} satisfying \eqref{eq- Dw condition}. We now consider the following problem 
\begin{equation}\label{AppProb2-interior}
\left\{
\begin{aligned}
&v_t-\di\, \overline{\mathbf{a}}_{B_{R}}(D v,t)=0 \quad &\text{in}& \quad Q^\lambda_{R}\equiv Q^\lambda_{R}(z_0),\\
&v=w \quad &\text{on}& \quad \partial_p Q^\lambda_{R},
\end{aligned}\right.
\end{equation}
where $Q_R^\lambda$ is defined by \eqref{defn-QR}.

We then obtain the following estimate.

\begin{lem}
	\label{lem2-inter}
	Let $v$ be a weak solution to \eqref{AppProb2-interior}. Then there exist $C>0$ and $\sigma_1$ so that
	\begin{equation}
	\label{eq-lem2-inter}
	\fint_{Q_{R}^\lambda}|D(w-v)|^pdxdt\leq C[{\bf a}]_{2,R_0}^{\sigma_1}\Big(\fint_{Q_{2R}^\lambda}|Dw|  dxdt\Big)^{p}.
	\end{equation}
\end{lem}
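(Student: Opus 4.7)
The strategy is the standard ``comparison plus monotonicity'' scheme, executed against the frozen-in-space nonlinearity $\overline{\mathbf{a}}_{B_R}$.

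First I would use $\varphi = v - w$ (implemented via Steklov averaging to justify the time derivative, since $v,w$ are only $C(I;L^2)\cap L^p(I;W^{1,p})$) as a test function. Since $v=w$ on $\partial_p Q_R^\lambda$, the difference lies in the correct space. Subtracting the weak formulations of \eqref{AppProb1-interior} and \eqref{AppProb2-interior} yields
\begin{equation*}
\int_{Q_R^\lambda}\langle \mathbf{a}(Dw,x,t)-\overline{\mathbf{a}}_{B_R}(Dv,t),\,D(w-v)\rangle\,dxdt + \mathcal{T} = 0,
\end{equation*}
where $\mathcal{T}$ is the time-derivative term. By the Steklov averaging identity and the fact that $(w-v)|_{\partial_p Q_R^\lambda}=0$, one has $\mathcal{T} = \tfrac{1}{2}\int_{B_R}(w-v)^2|_{t=t_0+\lambda^{2-p}R^2}\,dx \geq 0$, so it drops to the right-hand side with the correct sign.

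Next I would split the integrand using the trivial identity
\begin{equation*}
\mathbf{a}(Dw,x,t)-\overline{\mathbf{a}}_{B_R}(Dv,t) = \bigl[\mathbf{a}(Dw,x,t)-\overline{\mathbf{a}}_{B_R}(Dw,t)\bigr] + \bigl[\overline{\mathbf{a}}_{B_R}(Dw,t)-\overline{\mathbf{a}}_{B_R}(Dv,t)\bigr].
\end{equation*}
The averaged nonlinearity $\overline{\mathbf{a}}_{B_R}(\cdot,t)$ inherits the monotonicity \eqref{eq2-functiona}, so the second bracket paired with $D(w-v)$ is bounded below by $\Lambda_2|D(w-v)|^p$. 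For the first bracket, the definition of $\Theta(\mathbf{a},B_R)$ gives the pointwise bound $|\mathbf{a}(Dw,x,t)-\overline{\mathbf{a}}_{B_R}(Dw,t)|\leq \Theta(\mathbf{a},B_R)(x,t)\,|Dw|^{p-1}$, and Young's inequality with a small parameter lets me absorb a fraction of $|D(w-v)|^p$ into the left-hand side. The outcome is
\begin{equation*}
\fint_{Q_R^\lambda}|D(w-v)|^p\,dxdt \lesi \fint_{Q_R^\lambda}\Theta(\mathbf{a},B_R)^{p/(p-1)}|Dw|^p\,dxdt.
\end{equation*}

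Finally, to convert the right-hand side into the $L^1$ average of $|Dw|$ on $Q_{2R}^\lambda$, I would apply H\"older's inequality with exponents $\tfrac{p+\epsilon_0}{p}$ and $\tfrac{p+\epsilon_0}{\epsilon_0}$, using the higher integrability exponent $\epsilon_0$ produced by Proposition \ref{higherInte-Prop1-inter}. Proposition \ref{higherInte-Prop1-inter} then controls the $|Dw|$ factor by $\fint_{Q_{2R}^\lambda}|Dw|\,dxdt$ raised to the power $p$, while the $\Theta$-factor is $\fint_{Q_R^\lambda}\Theta^{\gamma}\,dxdt$ for $\gamma = \tfrac{p(p+\epsilon_0)}{(p-1)\epsilon_0}$, which by Remark (b) following Definition \ref{defn-smallBMO} is bounded by $\delta^{\epsilon}\lesi [{\bf a}]_{2,R_0}^{\sigma_1}$ for an appropriate $\sigma_1>0$.

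The main technical obstacle is twofold: the careful justification of $v-w$ as a test function through Steklov averages (to produce the nonnegative boundary term and eliminate $\mathcal{T}$), and the proper bookkeeping of exponents so that the final H\"older step actually lands on $\int|Dw|^{p+\epsilon_0}$ rather than on $\int |Dw|^p$, since only the former enjoys a self-improving reverse-H\"older bound via Proposition \ref{higherInte-Prop1-inter}. Everything else is a direct consequence of structural hypotheses \eqref{eq1-functiona}, \eqref{eq2-functiona}, and the small-BMO condition.
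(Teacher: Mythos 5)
Your proposal is correct and follows essentially the same line as the paper's proof: monotonicity of the frozen nonlinearity via \eqref{eq2-functiona}, testing with $w-v$ to eliminate the comparison term (you are slightly more explicit than the paper about the Steklov-averaged time term dropping out with the right sign), the decomposition of $\mathbf{a}(Dw,x,t)-\overline{\mathbf{a}}_{B_R}(Dv,t)$ into an oscillation piece controlled by $\Theta$ plus a monotone piece, Young's inequality to absorb $|D(w-v)|^p$, and finally H\"older with exponent $\tfrac{p+\epsilon_0}{p}$ together with Proposition~\ref{higherInte-Prop1-inter} and the small-BMO remark to produce the factor $[{\bf a}]_{2,R_0}^{\sigma_1}\big(\fint_{Q_{2R}^\lambda}|Dw|\big)^p$.
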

\begin{proof}
	Observe that, by \eqref{eq2-functiona}, we have
	\begin{equation}\label{eq2-proof prop1 inter}
	\begin{aligned}
	\fint_{Q_{R}^\lambda}|D(w-v)|^pdxdt \leq C\fint_{Q_{R}^\lambda} \langle \overline{{\bf a}}_{B_{R}}(Dw,t)-\overline{{\bf a}}_{B_{R}}(Dv,t), Dw-Dv \rangle dxdt.
	\end{aligned}
	\end{equation}
	
	Taking $w-v$ as a test function, it can be verified that
	$$
	\begin{aligned}
	\fint_{Q_{R}^\lambda} \langle \overline{{\bf a}}_{B_{R}}(Dw,t)&-\overline{{\bf a}}_{B_{R}}(Dv,t), Dw-Dv \rangle dxdt\\
	&=\fint_{Q_{R}^\lambda} \langle \overline{{\bf a}}_{B_{R}}(Dw,t)-{{\bf a}}_{B_{R}}(Dw,x,t), Dw-Dv \rangle dxdt.
	\end{aligned}
	$$
	This, in combination with \eqref{eq2-proof prop1 inter}, yields 
	\begin{equation}\label{eq1-proof w-v}
	\begin{aligned}
	\fint_{Q_{R}^\lambda}|D(w-v)|^pdxdt&\leq C\fint_{Q_{R}^\lambda} \langle \overline{{\bf a}}_{B_{R}}(Dw,t)-{{\bf a}}_{B_{R}}(Dw,x,t), Dw-Dv \rangle dxdt\\
	&\leq C\fint_{Q_{R}^\lambda} \Theta(\mathbf{a},B_{R}) |Dw|^{p-1} |D(w-v)|  dxdt.
	\end{aligned}
	\end{equation}
	Applying Young's inequality and Proposition \ref{higherInte-Prop1-inter}, we have, for $\tau>0$,
	\begin{equation}\label{eq2-proof w-v}
	\begin{aligned}
	\fint_{Q_{R}^\lambda} &\Theta(\mathbf{a},B_{R}) |Dw|^{p-1} |D(w-v)|  dxdt\\
	&\leq \tau\fint_{Q_{R}^\lambda}|D(w-v)|^p + C(\tau)\fint_{Q_{R}^\lambda} \Theta(\mathbf{a},B_{R})^{\f{p}{p-1}} |Dw|^p  dxdt\\
	&\leq \tau\fint_{Q_{R}^\lambda}|D(w-v)|^p + C(\tau)\Big(\fint_{Q_{R}^\lambda} \Theta(\mathbf{a},B_{R})^{\f{p(p+\epsilon_0)}{(p-1)\epsilon_0}}dxdt\Big)^{\f{\epsilon_0}{p+\epsilon_0}} \Big(\fint_{Q_{R}^\lambda}|Dw|^{p+\epsilon_0}  dxdt\Big)^{\f{p}{p+\epsilon_0}}\\
	&\leq \tau\fint_{Q_{R}^\lambda}|D(w-v)|^p +C(\tau)[{\bf a}]_{2,R_0}^{\sigma_1}\Big(\fint_{Q_{2R}^\lambda}|Dw|  dxdt\Big)^{p}.
	\end{aligned}
	\end{equation}
	From \eqref{eq1-proof w-v} and \eqref{eq2-proof w-v}, by taking $\tau$ to be sufficiently small, we obtain the desired estimate.
\end{proof}

We now state the standard H\"older regularity result. See for example \cite[Chapter 8]{B}.
\begin{prop}\label{Lvc-Prop-inter}
	Let $v$ solve the equation \eqref{AppProb2-interior}. Then we have
	$$
	\|Dv\|^p_{L^\vc(Q_{R/2}^\lambda)}\leq C\fint_{Q_R^\lambda}|Dv|^pdxdt.
	$$
\end{prop}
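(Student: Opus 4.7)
The plan is to exploit the essential feature that distinguishes equation \eqref{AppProb2-interior} from \eqref{AppProb1-interior}: the averaged nonlinearity $\overline{\bf a}_{B_R}(\xi,t)$ depends only on $t$, not on the spatial variable $x$. Consequently, formally differentiating the equation in the direction $e_i$ shows that each component $D_i v$ satisfies a linear (degenerate) parabolic equation with $p$-Laplacian-type structure inherited from \eqref{eq1-functiona}--\eqref{eq2-functiona}. Rigorously, this step is carried out via difference quotients $D^h_i v(x,t) = h^{-1}(v(x+he_i,t)-v(x,t))$, whose uniform estimates (obtained by testing the equation with a suitable cutoff) let one pass to the limit $h\to 0$ and recover $Dv\in L^p_{\mathrm{loc}}$ with a Caccioppoli inequality at the level of $Dv$.

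Next I would reduce to a standard parabolic cylinder via an intrinsic rescaling. Setting
$$
\tilde v(y,s) = \f{1}{\lambda R}\, v\!\left(x_0+Ry,\, t_0+\lambda^{2-p}R^2 s\right), \qquad (y,s)\in Q_1,
$$
one checks that $\tilde v$ solves a parabolic equation of the same structural type as \eqref{AppProb2-interior} on the standard cylinder $Q_1=B_1\times(-1,1)$, and the proposed inequality becomes
$$
\|D\tilde v\|_{L^\vc(Q_{1/2})}^p \leq C\fint_{Q_1}|D\tilde v|^p\, dy\, ds.
$$
This brings the problem squarely into the framework of local $L^\vc$-gradient estimates for solutions of evolutionary $p$-Laplace type equations.

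At this point I would invoke the Moser/De Giorgi iteration scheme for the truncated level sets $(|D\tilde v|^2-k^2)_+$, tested against the Caccioppoli inequality derived in the first step: a standard iteration on a shrinking sequence of concentric intrinsic cylinders, combined with the reverse-H\"older-type improvement available in the degenerate regime $p\geq 2$, yields the pointwise bound of $|D\tilde v|$ in terms of any $L^q$-mean on the larger cylinder. Undoing the rescaling restores the claimed estimate on $Q_{R/2}^\lambda$. All of these ingredients are carried out in Chapter~8 of \cite{B}, so the proof ultimately reduces to citing that result after the two reductions above.

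The main obstacle is the $p$-Laplacian degeneracy: without the intrinsic scaling $I^\lambda_R(t_0)=(t_0-\lambda^{2-p}R^2, t_0+\lambda^{2-p}R^2)$ the Moser iteration would not close, since the nonhomogeneous scaling of the equation would destroy the self-improving estimate at each step. The $x$-independence of $\overline{\bf a}_{B_R}(\cdot,t)$ is equally essential, as it is precisely what allows one to differentiate the equation and produce a tractable evolution equation for $Dv$; this is also the reason why Lemma~\ref{lem2-inter} was proved first, to compare the original $w$ with this much more regular $v$.
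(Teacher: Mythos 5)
Your proposal correctly outlines the standard argument behind the $L^\infty$-gradient bound for frozen-coefficient degenerate parabolic equations, and it rests on exactly the same source that the paper uses: the paper states the proposition as a known regularity result and refers to \cite[Chapter 8]{B} without supplying any further proof. Your sketch of the reductions (difference quotients, intrinsic rescaling to the unit cylinder, De~Giorgi/Moser iteration on the gradient) and your observation that the $x$-independence of $\overline{\bf a}_{B_R}(\cdot,t)$ is the essential structural feature faithfully unpack what underlies that citation.
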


We have the following approximation result.

\begin{prop}
	\label{appProp3-inter}
	Let $\mu\in L^{1,\theta}(\Om_T), 1<\theta\leq n+2$. For each $\epsilon>0$ there exists $\delta>0$ so that the following holds true. Assume that $u$ is a weak solution to the problem \eqref{ParabolicProblem} satisfying
	\begin{equation}
	\label{eq-u-prop3}
	\kappa^{-1}\lambda^{p-1}\leq \fint_{Q_{R}^\lambda}|Du|^{p-1}dxdt \ \ \text{and} \ \ \fint_{Q_{4R}^\lambda}|Du|^{p-1}dxdt\leq \kappa\lambda^{p-1},  \ \ \ \text{for some $\kappa>1$,}
	\end{equation}
and 
	\begin{equation}
	\label{eq-mu-prop3}
	\f{|\mu|(Q_{4R}^\lambda)}{|Q_{4R}^\lambda|}\leq \delta\lambda^m.
	\end{equation}
	Then there exists a weak solution $v$ to the problem \eqref{AppProb2-interior} satisfying
	\begin{equation}
	\label{eq-Lvc v-prop3}
	\|Dv\|_{L^\vc(Q_{R/2}^\lambda)}\lesi \lambda,
	\end{equation}
	and
	\begin{equation}
	\label{eq-u-v-prop3-inter}
	\fint_{Q_{R}^\lambda}|D(u-v)|^{p-1}dxdt\leq (\epsilon\lambda)^{p-1}.
	\end{equation}
\end{prop}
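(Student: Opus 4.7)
The strategy is a two-step approximation: first I would replace $u$ by the weak solution $w$ of the homogeneous problem \eqref{AppProb1-interior} (killing the measure datum), and then replace $w$ by the weak solution $v$ of the $x$-averaged problem \eqref{AppProb2-interior} (removing the $x$-dependence of $\mathbf{a}$). The two approximation errors are controlled by Lemma \ref{lem1-inter} and Lemma \ref{lem2-inter} respectively; the bridge between them is provided by Proposition \ref{higherInte-Prop1-inter}, and the final $L^\infty$ bound by Proposition \ref{Lvc-Prop-inter}. A standard intrinsic rescaling to unit geometry may be used throughout to simplify the tracking of constants.

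For step (i), I would apply Lemma \ref{lem1-inter} with exponent $q=p-1$ (admissible since $n\ge 2$ implies $p-1<p-1+\tfrac{1}{n+1}$). Using the density hypothesis \eqref{eq-mu-prop3} together with $|Q^\lambda_{4R}|\sim \lambda^{2-p}R^{n+2}$ and careful tracking of the powers of $\lambda$, $R$ and $\delta$ (with $m=p-1+\tfrac{1}{\theta-1}$), one obtains
$$
\Big(\fint_{Q^\lambda_{4R}}|D(u-w)|^{p-1}\,dxdt\Big)^{\!1/(p-1)}\le \mathcal{O}(\delta)\,\lambda.
$$
Combined with \eqref{eq-u-prop3} and Minkowski's inequality (valid since $p-1\ge 1$), this transfers the $L^{p-1}$ lower and upper bounds onto $Dw$ over $Q^\lambda_R$ and $Q^\lambda_{2R}$, with a possibly enlarged constant $\kappa'$. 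The hypotheses of Proposition \ref{higherInte-Prop1-inter}, however, are stated in the $L^p$ scale rather than the $L^{p-1}$ scale; bridging this gap requires a Caccioppoli energy estimate on concentric intrinsic cylinders together with the self-improving (Gehring-type) reverse H\"older inequality for $|Dw|$ associated to the homogeneous $p$-parabolic equation, which upgrades the $L^{p-1}$ averages to comparable $L^p$ averages of size $\lambda^p$.

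With Proposition \ref{higherInte-Prop1-inter} in force, step (ii) is immediate from Lemma \ref{lem2-inter} and the small-BMO hypothesis $[\mathbf{a}]_{2,R_0}\le\delta^2$:
$$
\Big(\fint_{Q^\lambda_R}|D(w-v)|^p\,dxdt\Big)^{1/p}\le C\delta^{2\sigma_1/p}\Big(\fint_{Q^\lambda_{2R}}|Dw|\,dxdt\Big)=\mathcal{O}(\delta)\,\lambda.
$$
H\"older's inequality passes this $L^p$ bound to the $L^{p-1}$ scale, and summing with the step (i) bound yields $(\fint_{Q^\lambda_R}|D(u-v)|^{p-1}\,dxdt)^{1/(p-1)}\le \mathcal{O}(\delta)\,\lambda$. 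Choosing $\delta$ small enough that $\mathcal{O}(\delta)\le\epsilon$ produces \eqref{eq-u-v-prop3-inter}. Finally, Minkowski applied to $Dv=Dw+(Dv-Dw)$ together with the step (ii) bound gives $\fint_{Q^\lambda_R}|Dv|^p\lesssim \lambda^p$, whence Proposition \ref{Lvc-Prop-inter} applied to $v$ on $Q^\lambda_{R/2}$ delivers \eqref{eq-Lvc v-prop3}. The principal obstacle is the passage from the $L^{p-1}$ information on $|Du|$ (and thus on $|Dw|$) to the $L^p$ bounds required by Proposition \ref{higherInte-Prop1-inter}, which relies on the self-improving regularity of solutions to the homogeneous equation and must be executed so that the resulting constant $\kappa'$ is independent of $\delta$, $\lambda$ and $R$.
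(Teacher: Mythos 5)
Your proposal follows essentially the same route as the paper: comparison $u\to w$ via Lemma \ref{lem1-inter} and the Morrey-density computation to get $\fint_{Q^\lambda_{4R}}|D(u-w)|^{p-1}\le \mathcal{O}(\delta)\lambda^{p-1}$, upgrade of the $L^{p-1}$ averages of $Dw$ to $L^p$ averages, then $w\to v$ via Proposition \ref{higherInte-Prop1-inter} and Lemma \ref{lem2-inter}, with the $L^\infty$ bound coming from Proposition \ref{Lvc-Prop-inter}. The $L^{p-1}$-to-$L^p$ upgrade that you correctly flag as the crux is dispatched in the paper by simply invoking Proposition 5.5 of \cite{Ba}, which encapsulates precisely the Caccioppoli/subsolution mechanism you sketch (and which this paper carries out in detail for the boundary analogue in Proposition \ref{prop2- imply u p}).
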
 
\begin{proof}
	Since $\mu\in L^{1,\theta}$, we have
	$$
	\f{\mu(Q_{4R}^\lambda)}{|Q_{4R}^\lambda|}\leq \f{\mu(Q_{4R})}{|Q_{4R}^\lambda|}\leq \lambda^{p-2}R^{-\theta}.
	$$
	This along with \eqref{eq-mu-prop3} imply
	\begin{equation}\label{eq-mu}
	\begin{aligned}
	\left[\f{|\mu|(Q^\lambda_{4R})}{|Q^\lambda_{4R}|^{(n+1)/(n+2)}}\right]^{\f{n+2}{p+(p-1)n}}&=\left[\f{|\mu|(Q^\lambda_{4R})}{|Q^\lambda_{4R}|}\right]^{\f{n+2}{p+(p-1)n}}|Q^\lambda_{4R}|^{\f{1}{p+(p-1)n}}\\
	&=\left[\f{|\mu|(Q^\lambda_{4R})}{|Q^\lambda_{4R}|}\right]^{\f{1}{\theta}\f{n+2}{p+(p-1)n}}\left[\f{|\mu|(Q^\lambda_{4R})}{|Q^\lambda_{4R}|}\right]^{\f{\theta-1}{\theta}\f{n+2}{p+(p-1)n}}|Q^\lambda_{4R}|^{\f{1}{p+(p-1)n}}\\
	&\lesi \left[\lambda^{p-2}R^{-\theta}\right]^{\f{1}{\theta}\f{n+2}{p+(p-1)n}}\left[\delta \lambda^m\right]^{\f{\theta-1}{\theta}\f{n+2}{p+(p-1)n}}\left[R^{n+2}\lambda^{2-p}\right]^{\f{1}{p+(p-1)n}}\\
	&\lesi \delta^{\f{\theta-1}{\theta}\f{n+2}{p+(p-1)n}}\lambda.
	\end{aligned}
	\end{equation}
	This along with Lemma \ref{lem1-inter} implies that
	\begin{equation}\label{eq-proof Prop 3 interior}
	\fint_{Q_{4R}^\lambda}|D(u-w)|^{p-1}dxdt\leq \mathcal{O}(\delta)\lambda^{p-1}.
	\end{equation}
	Taking this and \eqref{eq-u-prop3} into account, we obtain
	$$
	\lambda^{p-1}\lesi \fint_{Q_{R}^\lambda}|Dw|^{p-1}dxdt, \ \ \ \fint_{Q_{4R}^\lambda}|Dw|^{p-1}dxdt\lesi \lambda^{p-1},
	$$
	provided that  $\delta$ is sufficiently small.
	
	We now apply Proposition 5.5 in \cite{Ba} to find that 
	\begin{equation}\label{eq-wp}
	\bar{\kappa}^{-1} \lambda^{p}\leq \fint_{Q_{R}^\lambda}|Dw|^{p}dxdt, \ \ \ \fint_{Q_{2R}^\lambda}|Dw|^{p}dxdt\leq \bar{\kappa} \lambda^{p},
	\end{equation}
	for some $\bar{\kappa}>1$.
	
	Then the  inequality \eqref{eq-u-v-prop3-inter} follows immediately from  \eqref{eq-proof Prop 3 interior}, Lemma \ref{lem2-inter} and the following estimate
	$$
 	 \fint_{Q^\lambda_{R}}|D(u-v)|^{p-1}dxdt\lesi \fint_{Q^\lambda_{R}}|D(u-w)|^{p-1}dxdt+\fint_{Q^\lambda_{R}}|D(w-v)|^{p-1}dxdt.
	$$
	
	On the other hand, from Proposition \ref{Lvc-Prop-inter} we have
	$$
	\|Dv\|^p_{L^\vc(Q_{R/2}^\lambda)}\lesi\fint_{Q_{R}^\lambda}|Dv|^p dxdt\lesi \fint_{Q_{R}^\lambda}|Dw|^p dxdt+\fint_{Q_{R}^\lambda}|D(w-v)|^p dxdt.
	$$
	This along with \eqref{eq-wp} and Lemma \ref{lem2-inter} yields \eqref{eq-Lvc v-prop3}.
	
\end{proof}
 \section{Boundary estimates}
 
Fix $t_0\in (0, T)$ and $x_0\in \partial\Om$, we set $z_0=(x_0,t_0)$. Let $0<R<R_0/4$ and $\lambda\geq 1$. For the sake of simplicity, we restrict ourself to consider the lateral boundary case with respect to
$$
I^\lambda_{4R}(t_0)\subset (0,T),
$$
since the initial boundary case can be done in the same manner.\\

Before coming to the main comparision estimates, we shall establish some boundary estimates on weak solutions to the homogeneous equations associated to \eqref{ParabolicProblem}.

\subsection{Some boundary estimates for homogeneous equations}
We now consider the  weak solution 
$$
w\in C(I^\lambda_{4R}(t_0); L^2(\Om_{4R}(x_0)))\cap L^p(I^\lambda_{4R}(t_0); W^{1,p}(\Om_{4R}(x_0)))
$$
to the  following equation
\begin{equation}\label{AppProb1-boundary}
\left\{
\begin{aligned}
&w_t-\di\, \mathbf{a}(D w,x,t)=0 \quad &\text{in}& \quad K^\lambda_{4R}(z_0),\\
&w=0 \quad &\text{on}& \quad \partial_wK^\lambda_{4R}(z_0).
\end{aligned}\right.
\end{equation}

 \begin{lem}
 	\label{Caccioppoli-boundary} Let $w$ be a weak solution to the problem \eqref{AppProb1-boundary}. Let $K_{\rho_1}^\lambda(\bar{z})\subset K_{\rho_2}^\lambda(\bar{z})\subset K_{4R}^\lambda(z_0)$ with $\bar{z}=(\bar{x},\bar{t})$ and $\rho_2>\rho_1>0$. Then there exists $c=c(n,p,\Lambda_1, \Lambda_2)$ so that 
 	$$
 	\int_{K^\lambda_{\rho_1}(\bar{z})}|D w|^pdxdt + \sup_{t\in I^\lambda_{\rho_1}(\bar{t})}\int_{B_{\rho_1}(\bar{x})}|w|^2 dx\leq \f{1}{\lambda^{2-p}(\rho_2^2-\rho_1^2)}\int_{K^\lambda_{\rho_2}(\bar{z})}|w|^2dxdt+\f{c}{(\rho_2-\rho_1)^p}\int_{K^\lambda_{\rho_2}(\bar{z})}|w|^pdxdt. 
 	$$
 \end{lem}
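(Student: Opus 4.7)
The plan is to run a standard parabolic Caccioppoli argument, adapted to the intrinsic time-scaling $\lambda^{2-p}\rho^2$ and to the lateral-boundary setting. First, since $w$ vanishes on $\partial_w K^\lambda_{4R}(z_0)$, the zero extension of $w$ across $\partial\Omega$ lies in $W^{1,p}(B_{\rho_2}(\bar x))$ for a.e. time; this lets me work on full parabolic balls $Q^\lambda_\rho(\bar z)$ instead of the truncated sets $K^\lambda_\rho(\bar z)$, as long as my cutoff is supported in $Q^\lambda_{\rho_2}(\bar z)$.

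Next I would choose cutoffs $\eta\in C_c^\infty(B_{\rho_2}(\bar x))$ with $\eta\equiv 1$ on $B_{\rho_1}(\bar x)$, $0\le\eta\le 1$, and $|D\eta|\le c/(\rho_2-\rho_1)$, together with a Lipschitz time cutoff $\zeta$ on $\mathbb{R}$ satisfying $\zeta\equiv 1$ on $I^\lambda_{\rho_1}(\bar t)$, $\zeta\equiv 0$ outside $I^\lambda_{\rho_2}(\bar t)$, and $|\zeta'|\le c/(\lambda^{2-p}(\rho_2^2-\rho_1^2))$; the intrinsic scaling forces this factor of $\lambda^{2-p}$, which explains its appearance on the right-hand side. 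Because $w$ lacks time regularity I would formally test the equation with $\varphi=w\eta^p\zeta^2$ via Steklov averages, exactly as in the remark after the weak-solution definition, and then pass to the limit.

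The resulting identity splits into three contributions. For the parabolic part, $\int w_t\,w\eta^p\zeta^2$ produces, after integration by parts in time, the good term $\tfrac12 \int_{B_{\rho_2}(\bar x)}|w(\cdot,t)|^2\eta^p\zeta^2\,dx$ evaluated at a suitably chosen time (which, since $\zeta(\bar t+\lambda^{2-p}\rho_2^2)=0$ and one may slide the upper limit to any $t\in I^\lambda_{\rho_1}(\bar t)$, delivers the $\sup_t\int_{B_{\rho_1}(\bar x)}|w|^2\,dx$ term), plus the error $\int |w|^2\eta^p\zeta|\zeta'|$ which is controlled by $\lambda^{p-2}(\rho_2^2-\rho_1^2)^{-1}\int_{K^\lambda_{\rho_2}(\bar z)}|w|^2$. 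For the diffusion part, $\int\langle\mathbf{a}(Dw,x,t),D(w\eta^p\zeta^2)\rangle$ produces $\int\langle\mathbf{a}(Dw,x,t),Dw\rangle\eta^p\zeta^2$, bounded below by $\Lambda_2\int|Dw|^p\eta^p\zeta^2$ via \eqref{eq2-functiona} with $\eta=0$ (note $\mathbf{a}(0,x,t)=0$ by \eqref{eq1-functiona}), together with a cross term $\int\langle\mathbf{a}(Dw,x,t),p\eta^{p-1}D\eta\rangle w\zeta^2$ estimated using \eqref{eq1-functiona} by $\int|Dw|^{p-1}\eta^{p-1}|D\eta||w|\zeta^2$.

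The cross term is then split by Young's inequality with exponents $p/(p-1)$ and $p$, yielding $\tau\int|Dw|^p\eta^p\zeta^2 + c(\tau)\int |w|^p|D\eta|^p\zeta^2$. Choosing $\tau$ small enough to absorb into $\Lambda_2\int|Dw|^p\eta^p\zeta^2$ closes the estimate and gives the claimed inequality after using $|D\eta|^p\lesssim (\rho_2-\rho_1)^{-p}$. The only real technical point is the justification of the time-derivative manipulation, which is handled by the standard Steklov-average procedure; everything else reduces to the structure conditions and Young's inequality.
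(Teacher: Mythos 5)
Your proposal is correct and follows essentially the same route as the paper: test with $w$ times a cutoff raised to the power $p$ plus a time truncation (the paper uses $\eta^p(x,t)\chi^\epsilon_{t_1}(t)$ with $\chi^\epsilon_{t_1}$ doing the job of your Steklov/``slide the upper limit'' device), integrate by parts in time, bound the diffusion term from below via \eqref{eq2-functiona} with $\mathbf{a}(0,\cdot,\cdot)=0$ from \eqref{eq1-functiona}, control the cross term with \eqref{eq1-functiona} and Young's inequality, and absorb. The only cosmetic differences are that you keep separate space and time cutoff factors and phrase the time-regularity workaround via Steklov averages rather than the paper's explicit $\chi^\epsilon_{t_1}$ truncation.
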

 \begin{proof}
 	We adapt an indea in \cite{KL} to our present situation. Fix $t_1\in I^\lambda_{\rho_1}(\bar{t})$. Let $\eta\in C_0^\vc(Q_{\rho_2}^\lambda(\bar{z}))$ such that $\eta\geq 0$, $\eta=1$ in $Q_{\rho_1}^\lambda(\bar{z})$ and
 	\begin{equation}
 	\label{eq-Deta}
 	(\rho_2-\rho_1)|D\eta|+\lambda^{2-p}(\rho^2_2-\rho^2_1)|\eta_t|\leq 100.
 	 	\end{equation}
 	For $\epsilon\in (0,1)$ we define the function $\chi^\epsilon_{t_1}\in C_c^\vc([\epsilon/2,t_1-\epsilon/2])$ with
 	$$
 	\chi^\epsilon_{t_1}(t)=1 \ \ \text{in $[\epsilon,t_1-\epsilon]$, and $\left|[\chi^\epsilon_{t_1}(t)]'\right|\leq 2/\epsilon$}.
 	$$
 	We set $\varphi^\epsilon(x,t)=\eta^p(x,t)w(x,t)\chi^\epsilon_{t_1}(t)$. Taking $\varphi^\epsilon$ as a test function, we obtain
 	\begin{equation}
 	\label{eq-I12}
 	J^\epsilon_1 + J^\epsilon_2:=-\int_{K_{\rho_2}^\lambda(\bar{z})}w\varphi^\epsilon_t dz + \int_{K_{\rho_2}^\lambda(\bar{z})}{\bf a}(Dw,x,t)\cdot D \varphi^\epsilon dz =0.
 	 	\end{equation}
 	By integration by part, we have
 	$$
 	\begin{aligned}
 	J^\epsilon_1&=-\f{1}{2}\int_{K_{\rho_2}^\lambda(\bar{z})}w^2 (\eta^p\chi^\epsilon_{t_1})_t dz\\
 	&= -\f{p}{2}\int_{K_{\rho_2}^\lambda(\bar{z})}w^2 \eta^{p-1}\eta_t\chi^\epsilon_{t_1} dz-\f{1}{2}\int_{K_{\rho_2}^\lambda(\bar{z})}w^2 \eta^{p}(\chi^\epsilon_{t_1})_t dz,
 	\end{aligned}
 	$$
 	which implies 
 	$$
 	J_1^\epsilon \to -\f{p}{2}\int_{B_{\rho_2}(\bar{x})\times (0,t_1)}w^2 \eta^{p-1}\eta_t dz+\f{1}{2}\int_{B_{\rho_2}(\bar{x})}w(x,t_1)^2 \eta^{p}(x,t_1)dx \ \ \ \text{as $\epsilon\to 0$}.
 	$$
 	On the other hand, we have
 	$$
 	J_2^\epsilon \to  \int_{B_{\rho_2}(\bar{x})\times (0,t_1)}[{\bf a}(Dw,x,t)\cdot Dw]\eta^pdz+p\int_{B_{\rho_2}(\bar{x})\times (0,t_1)}[{\bf a}(Dw,x,t)\cdot D\eta]\eta^{p-1}wdz \ \ \ \text{as $\epsilon\to 0$}.
 	$$
 	Taking \eqref{eq-I12} and these two estimates above into account we find that
 	$$
 	\begin{aligned}
 	\int_{B_{\rho_2}(\bar{x})\times (0,t_1)}&[{\bf a}(Dw,x,t)\cdot Dw]\eta^pdz+\f{1}{2}\int_{B_{\rho_2}}w(x,t_1)^2 \eta^{p}(x,t_1)dx\\
 	&\leq \f{p}{2}\int_{B_{\rho_2}(\bar{x})\times (0,t_1)}w^2 \eta^{p-1}|\eta_t| dz+p\int_{B_{\rho_2}(\bar{x})\times (0,t_1)}\left|[{\bf a}(Dw,x,t)\cdot D\eta]\right|\eta^{p-1}wdz.
 	 	\end{aligned}
 	$$
 	This together with \eqref{eq1-functiona}, \eqref{eq2-functiona} and \eqref{eq-Deta} implies that
 	$$
 	\begin{aligned}
 	\int_{B_{\rho_2}(\bar{x})\times (0,t_1)}|Dw|^p\eta^pdz&+\f{1}{2}\int_{B_{\rho_2}(\bar{x})}w(x,t_1)^2 \eta^{p}(x,t_1)dx\\
 	&\lesi \f{1}{\lambda^{2-p}(\rho_2^2-\rho_1^2)}\int_{K_{\rho_2}^\lambda(\bar{z})}w^2 dz+p\int_{B_{\rho_2}(\bar{x})\times (0,t_1)}|Dw|^{p-1}|D\eta|\eta^{p-1}|w|dz.
 	\end{aligned}
 	$$
 	Applying Young's inequality we deduce that, for $\tau>0$,
 	$$
 	\begin{aligned}
 	\int_{B_{\rho_2}(\bar{x})\times (0,t_1)}&|Dw|^p\eta^pdz+\f{1}{2}\int_{B_{\rho_2}(\bar{x})}w(x,t_1)^2 \eta^{p}(x,t_1)dx\\
 	&\lesi \f{1}{\lambda^{2-p}(\rho_2^2-\rho_1^2)}\int_{K_{\rho_2}^\lambda(\bar{z})}w^2 dz+\tau \int_{B_{\rho_2}(\bar{x})\times (0,t_1)}|Dw|^p\eta^pdz +c(\tau)\int_{B_{\rho_2}(\bar{x})\times (0,t_1)}|D\eta|^p |w|^pdz\\
 	&\lesi \f{1}{\lambda^{2-p}(\rho_2^2-\rho_1^2)}\int_{K_{\rho_2}^\lambda(\bar{z})}w^2 dz+\tau \int_{B_{\rho_2}(\bar{x})\times (0,t_1)}|Dw|^p\eta^pdz +\f{c(\tau)}{(\rho_2-\rho_1)^p}\int_{B_{\rho_2}(\bar{x})\times (0,t_1)}|w|^pdz\\
 	\end{aligned}
 	$$
 	By choosing $\tau$ to be sufficiently small, we end up with
 	$$
 	\begin{aligned}
 	\int_{B_{\rho_2}(\bar{x})\times (0,t_1)}&|Dw|^p\eta^pdz+\f{1}{2}\int_{B_{\rho_2}(\bar{x})}w(x,t_1)^2 \eta^{p}(x,t_1)dx\\
 	&\lesi \f{1}{\lambda^{2-p}(\rho_2^2-\rho_1^2)}\int_{K_{\rho_2}^\lambda(\bar{z})}w^2 dz+ \f{1}{(\rho_2-\rho_1)^p}\int_{K_{\rho_2}^\lambda(\bar{z})}|w|^pdz.
 	\end{aligned}
 	$$
 	This deduces the desired estimate.
 \end{proof}
 
 We now give a useful result which will be used in the sequel.
 \begin{lem}
 	\label{lem-subslolution} Let $w$ be a weak solution to the equation
\eqref{AppProb1-boundary}. 	Then for $\theta\in (0,1)$ and $K_{\rho,\sigma}(z_0)\subset K^\lambda_{4R}(z_0)$ with $\rho, \sigma>0$   we have
 	\begin{equation}\label{eq-subsolution}
 	\sup_{K_{\theta \rho,\theta \sigma}(z_0)} |w|\leq c\left(\f{1}{(1-\theta)}\right)^{n+p}\f{
 		\sigma}{\rho^p}\fint_{K_{\rho,\sigma}(z_0)}|w|^{p-1}dz + \Big(\f{\rho^p}{\sigma}\Big)^{\f{1}{p-2}}.
 	\end{equation}
 \end{lem}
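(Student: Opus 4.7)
The estimate is the standard DiBenedetto-type $L^{p-1}$-to-$L^\infty$ sup bound for subsolutions of a degenerate parabolic operator, and I would prove it by Moser iteration adapted to the intrinsic scaling. The additive correction $(\rho^p/\sigma)^{1/(p-2)}$ is precisely the natural scale that compensates for the absence of scaling invariance when $p>2$.

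The starting point is a Caccioppoli inequality on a general space-time box $K_{\rho_1,s_1}(\bar z)\subset K_{\rho_2,s_2}(\bar z)$. The test-function argument of Lemma \ref{Caccioppoli-boundary} applies verbatim with $\eta$ supported in $Q_{\rho_2,s_2}(\bar z)$: the factor $\lambda^{2-p}(\rho_2^2-\rho_1^2)$ is simply replaced by $s_2-s_1$. Applying this to the shifted quantity $v:=|w|+k$ with level $k\sim(\rho^p/\sigma)^{1/(p-2)}$ (so that the $L^2$ and $L^p$ terms become commensurate at the natural scale), and combining with the parabolic Ladyzhenskaya interpolation
$$
\int_{K_{\rho,s}}|w|^{p(1+p/n)}\,dz \;\leq\; C\Bigl(\sup_t\int_{B_\rho}|w|^p\,dx\Bigr)^{p/n}\!\int_{K_{\rho,s}}|Dw|^p\,dz,
$$
yields a self-improving reverse Hölder inequality with Sobolev gain $\kappa=p/n$. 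I would then iterate on the geometric sequences $\rho_j=\theta\rho+(1-\theta)\rho\,2^{-j}$, $s_j=\theta\sigma+(1-\theta)\sigma\,2^{-j}$ with exponents $q_j=(p-1)(1+\kappa)^j$, and sum the resulting series to obtain
$$
\sup_{K_{\theta\rho,\theta\sigma}(z_0)}v \;\leq\; C(1-\theta)^{-(n+p)}\Bigl(\fint_{K_{\rho,\sigma}(z_0)}v^{p-1}\,dz\Bigr)^{1/(p-1)},
$$
the exponent $n+p$ tracking the standard bookkeeping of the $(\rho_j-\rho_{j+1})^{-p}$ and $(s_j-s_{j+1})^{-1}$ factors. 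Finally, the elementary inequality $X^{1/(p-1)}\leq (\sigma/\rho^p)X+(\rho^p/\sigma)^{1/(p-2)}$, valid for all $X\geq 0$ (since $ab^{p-2}=1$ with $a=\sigma/\rho^p$, $b=(\rho^p/\sigma)^{1/(p-2)}$ gives $X^{1/(p-1)}\leq\max(aX,b)$), applied to $X=\fint v^{p-1}\,dz$ recasts the bound into the exact form stated.

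The main obstacle is managing the non-homogeneity of the $p$-Laplace parabolic operator when $p>2$: the Caccioppoli estimate naturally produces a mixed $L^2$-in-time/$L^p$-in-gradient quantity, whereas the target bound is phrased in terms of an $L^{p-1}$ average. Bridging these exponents forces the shift by the intrinsic level $(\rho^p/\sigma)^{1/(p-2)}$, and careful bookkeeping of the constants through the Moser chain is required so that the $(1-\theta)^{-(n+p)}$ dependence emerges cleanly after summing the geometric series rather than blowing up.
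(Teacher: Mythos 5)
The paper does not actually prove this lemma: it records that $|w|$ is a nonnegative subsolution (citing DiBenedetto, Lemma 1.1, p.~19), then invokes Theorem 4.1 of DiBenedetto's monograph (pp.~122--123) for the interior sup bound, and asserts that the boundary version near a Reifenberg flat boundary follows ``by similar argument with some minor modifications.'' Your sketch reconstructs the Moser iteration underlying that cited theorem---same Caccioppoli step, same parabolic Sobolev interpolation, same intrinsic shift by $(\rho^p/\sigma)^{1/(p-2)}$, same Young-type cleanup---so you are following the same route, supplying what the paper outsources to a citation.

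Two gaps deserve attention. First and most importantly, the lemma is a \emph{boundary} estimate: $K_{\rho,\sigma}(z_0)=Q_{\rho,\sigma}(z_0)\cap\Om_T$ meets $\partial\Om$, and $w$ vanishes on the lateral boundary $\partial_w K^\lambda_{4R}(z_0)$. That Dirichlet condition is precisely what makes test functions of the form $\eta^p|w|^{\beta}\chi$ admissible on cylinders that cross $\partial\Om$ (equivalently, what makes the zero extension of $|w|$ a nonnegative subsolution on the full cylinder), and it is exactly the ``minor modification'' the paper alludes to; your sketch never uses it and, as written, proves only the interior statement. Second, the parabolic Ladyzhenskaya interpolation you quote requires control of $\sup_t\int|w|^p$ on the right-hand side, whereas the time-derivative term in the Caccioppoli estimate for the degenerate operator produces $\sup_t\int|w|^2$, not $\sup_t\int|w|^p$. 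The shift $v=|w|+k$ only gives the pointwise inequality $v^2\leq(\sigma/\rho^p)\,v^p$, which goes the wrong way for promoting $L^2$-in-time to $L^p$-in-time. To use the $m=p$ interpolation one must instead write $\sup_t\int v^p\leq(\sup v)^{p-2}\sup_t\int v^2$ and absorb the resulting subcritical power of $\sup v$ through the iteration---DiBenedetto's actual argument---or switch to the $m=2$ interpolation with gain $(n+2)/n$. Either way, the clean reverse H\"older inequality with gain $1+p/n$ does not close as you write it. The final elementary inequality $X^{1/(p-1)}\leq(\sigma/\rho^p)X+(\rho^p/\sigma)^{1/(p-2)}$ is correct and is indeed the right device to recast the output in the stated form.
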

\begin{proof}
	Since $w$ is a weak solution to \eqref{AppProb1-boundary}, $|w|$ is a nonnegative subsolution to the equation \eqref{AppProb1-boundary}. See for example Lemma 1.1 in \cite[p. 19]{B}. Recall that a sub-solution is a function such that the left-hand side of the weak formula of \eqref{AppProb1-boundary} is negative, for all positive test functions.
	
	The estimate \eqref{eq-subsolution} was proved in Theorem 4.1 in \cite[pp.122--123]{B} for the interior case. The estimate is still true near the boundary of a Reifenberg domain by similar argument  with some minor modifications. Hence, we skip the proof of \eqref{eq-subsolution} here and leave it to interested readers. 
\end{proof}
 
 \begin{prop}
 	\label{prop1- imply u p}
 	Let $w$ be a weak solution to the problem \eqref{AppProb1-boundary} satisfying the estimates
 	 	\begin{equation}\label{eq-lambda bound for w}
 	\kappa^{-1}\lambda^{p}\leq \fint_{K^\lambda_{R}(z_0)}|Dw|^{p}dxdt \ \ \text{and} \ \ \fint_{K^\lambda_{2R}(z_0)}|Dw|^{p}dxdt \leq \kappa\lambda^{p},
 	\end{equation}
 	for some $\kappa\geq 1$.
 	
 	Then there exist $1\leq q<p$, $c=c(n,p,\Lambda_1,\Lambda_2, \kappa)$ and $\sigma=\sigma(n,p)$ so that 
 	$$
 	\Big(\fint_{K_{r_1}^\lambda(z_0)}|Dw|^pdxdt\Big)^{1/p}\leq c\Big(\f{2R}{r_2-r_1}\Big)^\sigma \Big(\fint_{K_{r_2}^\lambda(z_0)}|Dw|^qdxdt\Big)^{1/q},
 	$$
 	for all $R\leq  r_1<r_2\leq 2R$.
 \end{prop}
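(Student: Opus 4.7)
My plan is the standard Caccioppoli + Sobolev--Poincar\'e + iteration scheme, adapted to the intrinsic parabolic scaling and the Reifenberg flat boundary. First, for any pair $R\leq \tau<s\leq 2R$, I apply Lemma \ref{Caccioppoli-boundary} on $K^\lambda_\tau(z_0)\su K^\lambda_s(z_0)\su K^\lambda_{4R}(z_0)$ to obtain
\[
\int_{K^\lambda_\tau}|Dw|^p\,dxdt + \sup_{t\in I^\lambda_\tau(t_0)}\int_{\Om_\tau(x_0)}|w|^2\,dx\ \lesi\ \f{\lambda^{p-2}}{s^2-\tau^2}\int_{K^\lambda_s}|w|^2\,dxdt + \f{1}{(s-\tau)^p}\int_{K^\lambda_s}|w|^p\,dxdt.
\]
The first term on the right is controlled by $\f{Cs^2}{s^2-\tau^2}\sup_{I^\lambda_s}\int_{\Om_s}|w|^2$ after writing $\int_{K^\lambda_s}|w|^2 \leq |I^\lambda_s|\sup_t\int|w|^2$ and noting that $|I^\lambda_s| = 2\lambda^{2-p}s^2$ cancels the $\lambda^{p-2}$ prefactor.

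Next, since $w$ vanishes on $\partial_w K^\lambda_{4R}(z_0)$ and $\Om$ is $(\delta,R_0)$-Reifenberg flat, the zero extension $\bar w(\cdot,t)$ from $\Om_s(x_0)$ to $B_s(x_0)$ is admissible in Lemma \ref{SP inequality}. Combining the resulting spatial bound $(\fint_{\Om_s}|w|^{q^\ast})^{1/q^\ast}\leq Cs(\fint_{B_s}|D\bar w|^q)^{1/q}$ with the $L^\infty_tL^2_x$ control supplied by the Caccioppoli step, via Gagliardo--Nirenberg interpolation in time I obtain, for the parabolic Sobolev exponent $q:=np/(n+2)\in[1,p)$,
\[
\int_{K^\lambda_s}|w|^p\,dxdt\ \lesi\ s^q\Big(\sup_{t\in I^\lambda_s}\int_{\Om_s(x_0)}|w|^2\,dx\Big)^{(p-q)/2}\int_{K^\lambda_s}|Dw|^q\,dxdt.
\]

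Writing $\Phi(r):=\sup_{t\in I^\lambda_r(t_0)}\int_{\Om_r(x_0)}|w|^2$ and substituting into the Caccioppoli estimate, I arrive at the recursive inequality
\[
\int_{K^\lambda_\tau}|Dw|^p+\Phi(\tau)\ \lesi\ \f{s^2}{s^2-\tau^2}\Phi(s)+\f{s^q}{(s-\tau)^p}\Phi(s)^{(p-q)/2}\int_{K^\lambda_s}|Dw|^q\,dxdt.
\]
Young's inequality applied to the last term splits it as $\epsilon\Phi(s)$ (to be absorbed across the iteration) plus a pure power of $\int|Dw|^q$; then the Giaquinta--Giusti iteration lemma applied to the monotone quantity $r\mapsto \int_{K^\lambda_r}|Dw|^p+\Phi(r)$ on $[r_1,r_2]$ removes the $\Phi$-terms and produces the loss factor $(2R/(r_2-r_1))^\sigma$ from the accumulated $(s-\tau)^{-p}$ constants. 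Finally, dividing by $|K^\lambda_{r_1}|$ to pass to averages, together with the hypothesis $\fint_{K^\lambda_{2R}}|Dw|^p\leq\kappa\lambda^p$, absorbs the remaining $\lambda$-dependence into the constant $c(n,p,\Lambda_1,\Lambda_2,\kappa)$.

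The delicate point is the Young/iteration closure. The naive Young exponents on $\Phi(s)^{(p-q)/2}|Dw|^q$ produce $|Dw|^{pn/(n+2-p)}$ rather than $|Dw|^q$, so I plan to pick $q$ strictly larger than $np/(n+2)$ (still less than $p$) so that the conjugate exponent lands cleanly on $|Dw|^p$, which is then controlled at the next iteration by Caccioppoli itself; and to use the monotonicity of $\Phi$ in $r$ across the iteration rather than trying to absorb at a single scale. This interplay between the intrinsic time-scaling $|I^\lambda_r|=2\lambda^{2-p}r^2$ and the spatial Sobolev exponent $q^\ast=nq/(n-q)$ is the feature that makes the parabolic case genuinely more intricate than the elliptic Gehring argument.
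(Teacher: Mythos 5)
Your Caccioppoli and Sobolev--Poincar\'e ingredients are the right ones, but the iteration scheme does not close, and this is where the paper does something genuinely different from what you propose. In your recursive inequality the first right-hand term carries the coefficient $s^2/(s^2-\tau^2)$ in front of $\Phi(s)$, which is always bigger than $1$ and blows up as $\tau\to s$; Lemma~\ref{HF'sLemma} (or any hole-filling variant) requires a coefficient $\theta_1<1$ on the ``same-type'' term, so the $\Phi$-iteration cannot absorb it. Your Young split of $\Phi(s)^{(p-q)/2}\int|Dw|^q$ also does not close: with $q=np/(n+2)$ the conjugate power lands on $|Dw|^{np/(n+2-p)}>|Dw|^p$, and the proposed fix of enlarging $q$ breaks the parabolic Gagliardo--Nirenberg balance --- the identity $\int_{K}|w|^p\lesi s^q\,\Phi^{q/n}\int_{K}|Dw|^q$ only holds for precisely $q=np/(n+2)$ (this is forced by requiring the Sobolev term to appear with exponent one); in fact if you demand the conjugate Young exponent to land exactly on $|Dw|^p$ the algebra forces $q=p$, a contradiction. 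You also do not address the case $p\geq n+2$, where $q=np/(n+2)\geq n$ makes $q^*=nq/(n-q)$ unusable; the paper treats that case separately with $q=p-1$ and the Sobolev exponent $\kappa=2$.

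The paper avoids iterating $\Phi$ altogether by exploiting the two-sided $\lambda$-bounds \eqref{eq-lambda bound for w} in a way you do not use. First, $I_1$ is estimated by H\"older and Young as $I_1\leq\tau\lambda^p+c(\tau)I_2$, and the small piece $\tau\lambda^p$ is absorbed via the \emph{lower} bound $\fint_{K^\lambda_{r_1}}|Dw|^p\geq c\,\kappa^{-1}\lambda^p$ (valid since $r_1\geq R$ and $\Om$ is Reifenberg flat). Second, rather than leaving $\sup_{t}\int_{\Om_{r_3}}|w|^2$ abstract, the paper applies Caccioppoli a \emph{second} time on the pair $K^\lambda_{r_3}\subset K^\lambda_{r_2}$, followed by Sobolev--Poincar\'e \eqref{SP-inequality 2} and the \emph{upper} bound $\fint_{K^\lambda_{2R}}|Dw|^p\leq\kappa\lambda^p$, to get the clean a priori estimate $\sup_{t\in I^\lambda_{r_3}}\int_{\Om_{r_3}}|w|^2\lesi r_2^{n+2}\lambda^2\big(\f{2R}{r_2-r_1}\big)^p$. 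Inserting this turns $I_2$ into $\lambda^{2p/(n+2)}(\f{2R}{r_2-r_1})^{p+qp/n}\fint|Dw|^q$, at which point one more Young step against $\lambda^p$, absorbed by $I_2\geq C\lambda^p$, gives the reverse-H\"older inequality of the proposition. To fix your argument you would need to bound $\Phi$ a priori by $\lambda^2$ (times geometric factors) in this way, not carry it through an iteration.
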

 \begin{proof}
 	For the sake of simplicity, we shall write, respectively, $K_r^\lambda, \Om_r$ for $K_r^\lambda(z_0), \Om_r(x_0)$ for all $r>0$. Set $r_3=r_1 +(r_2-r_1)/2$. Then from Lemma \ref{Caccioppoli-boundary}, we have
 	$$
 	\begin{aligned}
 	\fint_{K_{r_1}^\lambda}|Dw|^pdxdt&\lesi \f{1}{\lambda^{2-p}(r_3^2-r_1^2)}\fint_{K^\lambda_{r_3}}|w|^2dxdt+\f{1}{(r_3-r_1)^p}\fint_{K^\lambda_{r_3}}|w|^pdxdt\\
 	&\lesi \f{1}{\lambda^{2-p}(r_3-r_1)^2}\fint_{K^\lambda_{r_3}}|w|^2dxdt+\f{1}{(r_3-r_1)^p}\fint_{K^\lambda_{r_3}}|w|^pdxdt\\
 	&\sim \f{1}{\lambda^{2-p}(r_2-r_1)^2}\fint_{K^\lambda_{r_3}}|w|^2dxdt+\f{1}{(r_2-r_1)^p}\fint_{K^\lambda_{r_3}}|w|^pdxdt\\
 	&:=I_1+I_2.
 	\end{aligned}
 	$$
 	By H\"older's inequality, for $\tau>0$ we have
 	$$
 	\begin{aligned}
 	I_1&\leq \lambda^{p-2}\Big(\f{1}{(r_2-r_1)^p}\fint_{K^\lambda_{r_3}}|w|^pdxdt\Big)^{2/p}\\
 	&\leq \tau \lambda^p + c(\tau)I_2
 	\end{aligned}
 	$$
 	where in the last inquality we used Young's inequality.
 	
 	From this and \eqref{eq-lambda bound for w}, by taking $\tau$ to be sufficiently small, we find that
 	$$
 	\fint_{K_{r_1}^\lambda}|Dw|^pdxdt\lesi I_2.
 	$$
 	Hence, it suffices to prove that  
 	\begin{equation}\label{eq-I2}
 	I_2\lesi \Big(\f{2R}{r_2-r_1}\Big)^{p\sigma} \Big(\fint_{K_{r_2}^\lambda}|Dw|^qdxdt\Big)^{p/q}.
 	\end{equation}
 	Indeed, we now consider two cases: $2\leq p<n+2$ and $p\geq n+2$.\\

 	\noindent{\bf Case 1: $2\leq p<n+2$.} By H\"older's inequality, we have
 	$$
 	\fint_{\Om_{r_3}}|w|^pdx\leq \Big(\fint_{\Om_{r_3}}|w|^2dx\Big)^{q/n}\Big(\fint_{\Om_{r_3}}|w|^{q^*}dx\Big)^{q/q^*},
 	$$
 	where $q=pn/(n+2)<\min\{n,p\}$ and $q^*=nq/(n-q)$. 
 	
Then applying Sobolev-Poincar\'e's inequalities \eqref{SP-inequality 1}, we have
 	$$
 	 \Big(\fint_{\Om_{r_3}}|w|^{q^*}dx\Big)^{q/q^*}\lesi r_3^{q}\fint_{\Om_{r_3}}|Dw|^{q}dx.
 	$$
 	
 	Hence,
 	$$
 	\begin{aligned}
 		\fint_{\Om_{r_3}}|w|^pdx&\leq r_3^{q}\Big(\fint_{\Om_{r_3}}|w|^2dx\Big)^{q/n}\Big(\fint_{\Om_{r_3}}|Dw|^{q}dx\Big)\\
 		&\sim \Big(\int_{\Om_{r_3}}|w|^2dx\Big)^{q/n}\Big(\fint_{\Om_{r_3}}|Dw|^{q}dx\Big).
 	\end{aligned}
 	$$
 	This implies that 
 	\begin{equation}\label{eq1-I2}
 	\begin{aligned}
 	I_2&\lesi \f{1}{(r_2-r_1)^p}\Big(\fint_{K_{r_3}^\lambda}|Dw|^{q}dz\Big)\Big(\sup_{t\in I^\lambda_{r_3}}\int_{\Om_{r_3}}|w|^2dx\Big)^{q/n}\\
 	&\lesi r_2^{-p}\Big(\f{R}{r_2-r_1}\Big)^p\Big(\fint_{K_{r_3}^\lambda}|Dw|^{q}dz\Big)\Big(\sup_{t\in I^\lambda_{r_3}}\int_{\Om_{r_3}}|w|^2dx\Big)^{q/n}.
 	\end{aligned}
 	\end{equation}
 	
 	On the other hand, by Lemma \ref{Caccioppoli-boundary} and H\"older's inequality, we have
 	$$
 	\begin{aligned}
 	\sup_{t\in I^\lambda_{r_3}}\int_{\Om_{r_3}}|w|^2dx &\lesi \f{1}{\lambda^{2-p}(r_2^2-r_3^2)}\int_{K^\lambda_{r_2}}|w|^2dxdt+\f{c}{(r_2-r_3)^p}\int_{K^\lambda_{r_2}}|w|^pdxdt\\
 	&\lesi \f{1}{\lambda^{2-p}(r_2-r_3)^2}\int_{K^\lambda_{r_2}}|w|^2dxdt+\f{1}{(r_2-r_3)^p}\int_{K^\lambda_{r_3}}|w|^pdxdt\\
 	&\sim \f{1}{\lambda^{2-p}(r_2-r_1)^2}\int_{K^\lambda_{r_2}}|w|^2dxdt+\f{1}{(r_2-r_1)^p}\int_{K^\lambda_{r_2}}|w|^pdxdt\\
 	&\lesi r_2^{n+2}\Big(\f{1}{(r_2-r_1)^p}\fint_{K^\lambda_{r_2}}|w|^pdxdt\Big)^{2/p}+\f{\lambda^{2-p}r_2^{n+2}}{(r_2-r_1)^p}\fint_{K^\lambda_{r_2}}|w|^pdxdt.
 	\end{aligned}
 	$$
 	Applying Sobolev-Poincar\'e's inequality \eqref{SP-inequality 2} and \eqref{eq-lambda bound for w}, we obtain further
 	$$
 	\begin{aligned}
 	\sup_{t\in I^\lambda_{r_3}}\int_{\Om_{r_3}}|w|^2dx &\lesi r_2^{n+2}\Big(\f{r_2^p}{(r_2-r_1)^p}\fint_{K^\lambda_{r_3}}|Dw|^pdxdt\Big)^{2/p}+\lambda^{2-p}r_2^{n+2}\f{r_2^p}{(r_2-r_1)^p}\fint_{K^\lambda_{r_2}}|Dw|^pdxdt\\
 	&\lesi r_2^{n+2}\lambda^2\Big(\f{2R}{r_2-r_1}\Big)^p.
 	\end{aligned}
 	$$
 	Inserting this into \eqref{eq1-I2}, and then using Young's inequality we obtain, for $\tau>0$,
 	$$
 	\begin{aligned}
 	I_2&\lesi r_2^{(n+2)q/n-p}\lambda^{2q/n}\Big(\f{2R}{r_2-r_1}\Big)^{p+qp/n}\fint_{K_{r_3}^\lambda}|Dw|^{q}dz\\
 	&=\Big(\f{2R}{r_2-r_1}\Big)^{p+qp/n}\lambda^{\f{2p}{n+2}}\fint_{K_{r_3}^\lambda}|Dw|^{q}dz\\
 	&\leq \tau \lambda^p + c(\tau)\Big(\f{2R}{r_2-r_1}\Big)^{\f{p^2(n+q)}{pq}}\Big(\fint_{K_{r_3}^\lambda}|Dw|^{q}dz\Big)^{p/q}.
 	\end{aligned}
 	$$
 	This together with the fact that $I_2\geq C\lambda^p$ implies that
 	$$
 	I_2 \leq c(\tau)\Big(\f{2R}{r_2-r_1}\Big)^{\f{n+2}{n}+\f{p^2}{n}}\Big(\fint_{K_{r_3}^\lambda}|Dw|^{q}dz\Big)^{p/q}
 	$$
 	provided that $\tau$ is sufficiently small.
 	
 	\medskip
 	
 	\noindent{\bf Case 2: $p\geq n+2$.} By H\"older's inequality, we have
 	$$
 	\fint_{\Om_{r_3}}|w|^pdx\leq \Big(\fint_{\Om_{r_3}}|w|^2dx\Big)^{1/2}\Big(\fint_{\Om_{r_3}}|w|^{2q}dx\Big)^{1/2}
 	$$
where $q=p-1>n$. 

Then applying Sobolev-Poincar\'e's inequalities \eqref{SP-inequality 1}, we have
 	$$
 	\Big(\fint_{\Om_{r_3}}|w|^{2q}dx\Big)^{1/2}= \Big(\fint_{\Om_{r_3}}|w|^{2q}dx\Big)^{\f{q}{2q}}\lesi r_3^{q}\fint_{\Om_{r_3}}|Dw|^{q}dx.
 	$$

 	Hence,
 	$$
 	\begin{aligned}
 	\fint_{\Om_{r_3}}|w|^pdx&\leq r_3^{q}\Big(\fint_{\Om_{r_3}}|w|^2dx\Big)^{1/2}\Big(\fint_{\Om_{r_3}}|Dw|^{q}dx\Big)\\
 	&\sim r_3^{q-n/2}\Big(\int_{\Om_{r_3}}|w|^2dx\Big)^{1/2}\Big(\fint_{\Om_{r_3}}|Dw|^{q}dx\Big).
 	\end{aligned}
 	$$
 	Therefore, 
 	\begin{equation}\label{eq1-I2-case2}
 	\begin{aligned}
 	I_2&\lesi \f{r_3^{q-n/2}}{(r_2-r_1)^p}\Big(\fint_{K_{r_3}^\lambda}|Dw|^{q}dz\Big)\Big(\sup_{t\in I^\lambda_{r_3}}\int_{\Om_{r_3}}|w|^2dx\Big)^{1/2}\\
 	&\lesi r_2^{-1-n/2}\Big(\f{R}{r_2-r_1}\Big)^p\Big(\fint_{K_{r_3}^\lambda}|Dw|^{q}dz\Big)\Big(\sup_{t\in I^\lambda_{r_3}}\int_{\Om_{r_3}}|w|^2dx\Big)^{1/2}.
 	\end{aligned}
 	\end{equation}
 	
 	In Case 1, we proved that
 	$$
 	\begin{aligned}
 	\sup_{t\in I^\lambda_{r_3}}\int_{\Om_{r_3}}|w|^2dx 
 	&\lesi r_2^{n+2}\lambda^2\Big(\f{2R}{r_2-r_1}\Big)^p.
 	\end{aligned}
 	$$
 	Inserting this into \eqref{eq1-I2-case2}, and then using Young's inequality we obtain, for $\tau>0$,
 	$$
 	\begin{aligned}
 	I_2&\lesi r_2^{(n+2)/2 -1-n/2}\lambda\Big(\f{2R}{r_2-r_1}\Big)^{3p/2}\fint_{K_{r_3}^\lambda}|Dw|^{q}dz\\
 	&=\lambda\Big(\f{2R}{r_2-r_1}\Big)^{3p/2}\fint_{K_{r_3}^\lambda}|Dw|^{q}dz\\
 	&\leq \tau \lambda^p + c(\tau)\Big(\f{2R}{r_2-r_1}\Big)^{\f{3p^2}{2q}}\Big(\fint_{K_{r_3}^\lambda}|Dw|^{q}dz\Big)^{\f{p}{q}}.
 	\end{aligned}
 	$$
 	This together with the fact that $I_2\geq C\lambda^p$ implies that
 	$$
 	I_2 \lesi c(\tau)\Big(\f{2R}{r_2-r_1}\Big)^{\f{3p^2}{2(p-1)}}\Big(\fint_{K_{r_3}^\lambda}|Dw|^{q}dz\Big)^{\f{p}{q}}
 	$$
 	provided that $\tau$ is sufficiently small.
 	
 	This completes our proof.
 \end{proof}
 
 We now recall the following result in \cite[Lemma 5.1]{KM1}.
 
 \begin{lem}
 	\label{lem-KM}
 	Let $1<q<p<\vc$ and $\sigma\geq 0$, and let $\{U_{\theta}: 0<\theta\leq 1\}$ be a family of open sets in $\mathbb{R}^{n+1}$ with property $U_{\theta_1}\subset U_{\theta_2}\subset U_1\equiv U$ whenever $0<\theta_1\leq \theta_2<1$. If $f\in L^q(U)$ is a non-negative function satisfying 
 	$$
 	\Big(\fint_{U_{\theta_1}}f^p dxdt\Big)^{1/p}\leq \f{c_0}{(\theta_2-\theta_1)^\sigma}\Big(\fint_{U_{\theta_2}}f^q dxdt\Big)^{1/q},
 	$$
 	for all $1/2\leq \theta_1<\theta_2\leq 1$, then there exists $c=c(c_0,\sigma,p,q)$ so that
 	$$
 	\Big(\fint_{U_{\theta}}f^p dxdt\Big)^{1/p}\leq\f{c}{(1-\theta)^{\f{\sigma q(p-1)}{p-q}}}\fint_{U}f dxdt.
 	$$
 \end{lem}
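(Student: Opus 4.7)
The plan is to reduce the hypothesis to a standard Giaquinta--Giusti type iteration by first performing a H\"older interpolation between $L^1$ and $L^p$. Set $\Phi(\theta):=\big(\fint_{U_\theta}f^p\,dxdt\big)^{1/p}$ and $A:=\fint_U f\,dxdt$. Since $1<q<p$, one can pick $\alpha\in(0,1)$ so that $\f{1}{q}=\f{1-\alpha}{1}+\f{\alpha}{p}$; explicitly $\alpha=\f{p(q-1)}{q(p-1)}$ and $1-\alpha=\f{p-q}{q(p-1)}$. H\"older's inequality applied to the probability measure $dxdt/|U_{\theta_2}|$ then gives
\begin{equation*}
\Big(\fint_{U_{\theta_2}}f^q\,dxdt\Big)^{1/q}\leq\Big(\fint_{U_{\theta_2}}f\,dxdt\Big)^{1-\alpha}\Phi(\theta_2)^{\alpha},
\end{equation*}
so the assumed inequality becomes
\begin{equation*}
\Phi(\theta_1)\leq\f{c_0}{(\theta_2-\theta_1)^\sigma}\Big(\fint_{U_{\theta_2}}f\,dxdt\Big)^{1-\alpha}\Phi(\theta_2)^{\alpha}, \qquad 1/2\leq\theta_1<\theta_2\leq 1.
\end{equation*}

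Next I would bound $\fint_{U_{\theta_2}}f$ by $A$. In the intended application the family $\{U_\theta\}$ consists of nested parabolic cylinders with $|U_{\theta_2}|\geq c|U|$ whenever $\theta_2\geq 1/2$, so $\fint_{U_{\theta_2}}f\,dxdt\leq c'A$. Applying Young's inequality with conjugate exponents $1/\alpha$ and $1/(1-\alpha)$ to separate the two factors on the right, and absorbing a small multiple of $\Phi(\theta_2)$, yields
\begin{equation*}
\Phi(\theta_1)\leq\f{1}{2}\Phi(\theta_2)+\f{C}{(\theta_2-\theta_1)^{\sigma/(1-\alpha)}}\,A, \qquad 1/2\leq\theta_1<\theta_2\leq 1,
\end{equation*}
where $C=C(c_0,\sigma,p,q)$.

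Finally I would invoke the classical iteration lemma (see e.g.\ \cite{Gia}): any bounded $\Phi:[\theta,1-\epsilon]\to\mathbb{R}^+$ satisfying such a hole--filling inequality with prefactor $1/2<1$ obeys
\begin{equation*}
\Phi(\theta)\leq\f{c}{(1-\epsilon-\theta)^{\sigma/(1-\alpha)}}\,A.
\end{equation*}
Boundedness of $\Phi$ on each $[1/2,1-\epsilon]$ follows directly from the original hypothesis together with $f\in L^q(U)$, which forces $f\in L^p(U_{1-\epsilon})$. Letting $\epsilon\to 0^+$ and recalling that $\sigma/(1-\alpha)=\sigma q(p-1)/(p-q)$ delivers the claimed estimate. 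The only delicate point in the scheme is the passage $\fint_{U_{\theta_2}}f\lesi A$, which relies on the implicit comparability $|U_{\theta_2}|\sim |U|$ for $\theta_2\geq 1/2$; once this is in hand, the rest is a routine interpolation plus iteration argument.
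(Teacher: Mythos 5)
The paper does not prove this lemma; it is recalled verbatim from Kuusi--Mingione \cite[Lemma 5.1]{KM1}, so there is no in-paper argument to compare against. Your proof is correct and follows the standard route for such reverse-H\"older-to-$L^1$ self-improvements: you interpolate the $L^q$-mean between the $L^1$- and $L^p$-means via log-convexity of $L^r$-norms over a probability space, apply Young's inequality with exponents $1/\alpha$ and $1/(1-\alpha)$ to obtain the hole-filling inequality $\Phi(\theta_1)\leq\tfrac12\Phi(\theta_2)+CA(\theta_2-\theta_1)^{-\sigma/(1-\alpha)}$, and then invoke the iteration lemma --- which is in fact exactly the paper's Lemma~\ref{HF'sLemma}. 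The arithmetic $\sigma/(1-\alpha)=\sigma q(p-1)/(p-q)$ checks out, and the a~priori boundedness of $\Phi$ on $[1/2,1-\epsilon]$ needed for the iteration is correctly deduced from the hypothesis with $\theta_2=1$ and $f\in L^q(U)$.

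You are also right that the only non-formal step is $\fint_{U_{\theta_2}}f\lesssim\fint_U f$. Nesting and $f\geq0$ give $\int_{U_{\theta_2}}f\leq\int_U f$, so one additionally needs the uniform measure comparability $|U_{\theta_2}|\gtrsim|U|$ for $\theta_2\geq1/2$; without it the conclusion is false (concentrate $f$ on a very small $U_{1/2}$ --- the hypothesis imposes no obstruction, yet the left side blows up relative to $\fint_U f$). This comparability is an implicit hypothesis of the lemma as stated; it is built into the Kuusi--Mingione formulation and holds in the paper's sole application (from Proposition~\ref{prop1- imply u p} to Lemma~\ref{lem- imply u p}, with $U_\theta=K^\lambda_{2\theta R}(z_0)$, $\theta\in[1/2,1]$), where it follows from the Reifenberg measure-density estimate \eqref{eq1-Reifenberg domain}. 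So the gap you identify lies in the lemma's statement rather than in your argument, and your proof makes the missing clause explicit.
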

As a direct consequence of Proposition \ref{prop1- imply u p} and Lemma \ref{lem-KM}, we deduce the following result.
 \begin{lem}
 	\label{lem- imply u p}
 	Let $w$ be a weak solution to the problem \eqref{AppProb1-boundary} satisfying the estimates
 	\begin{equation*}
 	\kappa^{-1}\lambda^{p}\leq \fint_{K^\lambda_{R}(z_0)}|Dw|^{p}dxdt \ \ \text{and} \ \ \fint_{K^\lambda_{2R}(z_0)}|Dw|^{p}dxdt \leq \kappa\lambda^{p},
 	\end{equation*}
 	for some $\kappa\geq 1$.
 	
 	Then we have 
 	$$
 	\Big(\fint_{K_{R}^\lambda(z_0)}|Dw|^pdxdt\Big)^{1/p}\lesi \fint_{K_{2R}^\lambda(z_0)}|Dw|dxdt.
 	$$
 \end{lem}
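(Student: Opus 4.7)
The plan is to apply Proposition \ref{prop1- imply u p} to obtain a reverse-H\"older-type inequality on a nested family of cylinders, and then self-improve the right-hand exponent from $q<p$ down to $1$ via the iteration in Lemma \ref{lem-KM}.

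First, I would introduce the family $U_\theta := K^\lambda_{2R\theta}(z_0)$ for $\theta\in (0,1]$, so that $U_{1/2}=K^\lambda_R(z_0)$, $U_1=K^\lambda_{2R}(z_0)$, and $U_{\theta_1}\subset U_{\theta_2}$ whenever $\theta_1\leq \theta_2$; this is precisely the monotonicity required by Lemma \ref{lem-KM}. Setting $r_i := 2R\theta_i$, the range $1/2\leq \theta_1<\theta_2\leq 1$ matches the range $R\leq r_1<r_2\leq 2R$ in Proposition \ref{prop1- imply u p}, and $\f{2R}{r_2-r_1}=\f{1}{\theta_2-\theta_1}$. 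Since the hypothesis \eqref{eq-lambda bound for w} of Proposition \ref{prop1- imply u p} is exactly the hypothesis of the present lemma, Proposition \ref{prop1- imply u p} supplies constants $1\leq q<p$, $\sigma\geq 0$ and $c_0$ such that, with $f:=|Dw|$,
\[
\Big(\fint_{U_{\theta_1}}f^p\,dxdt\Big)^{1/p}\leq \f{c_0}{(\theta_2-\theta_1)^\sigma}\Big(\fint_{U_{\theta_2}}f^q\,dxdt\Big)^{1/q}
\]
holds for all $1/2\leq \theta_1<\theta_2\leq 1$, which is exactly the hypothesis of Lemma \ref{lem-KM}.

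Finally, I would apply Lemma \ref{lem-KM} to the family $\{U_\theta\}$ and specialize its conclusion to $\theta=1/2$. Because $1-1/2$ is a fixed absolute constant, the factor $(1-\theta)^{-\sigma q(p-1)/(p-q)}$ is absorbed into the implicit constant, leaving $\big(\fint_{K^\lambda_R(z_0)}|Dw|^p\,dxdt\big)^{1/p}\lesi \fint_{K^\lambda_{2R}(z_0)}|Dw|\,dxdt$, which is the desired inequality. There is no real obstacle here; everything has been prepared by the two preceding results, and the only care required is the parameter matching between the $(r_1,r_2)$ scales of Proposition \ref{prop1- imply u p} and the $(\theta_1,\theta_2)$ parametrization used in Lemma \ref{lem-KM}, which is handled by the linear change $r_i=2R\theta_i$.
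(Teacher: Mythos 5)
Your proof is correct and follows exactly the route the paper intends: the paper states this lemma as a ``direct consequence'' of Proposition \ref{prop1- imply u p} and Lemma \ref{lem-KM} without further details, and your parametrization $U_\theta = K^\lambda_{2R\theta}(z_0)$ together with the substitution $r_i=2R\theta_i$ is precisely the bookkeeping needed to make that deduction explicit. (One cosmetic point: Lemma \ref{lem-KM} is stated for $1<q<p$ while Proposition \ref{prop1- imply u p} nominally allows $q=1$, but inspecting its proof shows the exponent $q$ produced there satisfies $q>1$ in both cases, so the application is licit.)
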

 
\begin{prop}
	\label{prop2- imply u p}
	Let $w$ be a weak solution to the problem \eqref{AppProb1-boundary} satisfying the estimates
	\begin{equation}\label{eq1-lambda bound for w}
	\f{1}{\kappa_1}\lambda^{p-1}\leq \fint_{K^\lambda_R(z_0)}|Dw|^{p-1}dxdt \ \ \text{and} \ \ \fint_{K^\lambda_{4R}(z_0)}|Dw|^{p-1}dxdt\leq \kappa_1\lambda^{p-1},
	\end{equation}
	for some $\kappa_2\geq 1$ and $\lambda>1$. Then we have
	\begin{equation}\label{eq2-lambda bound for w}
	\f{1}{\kappa_2}\lambda^{p}\leq \fint_{K^\lambda_{R}(z_0)}|Dw|^{p}dxdt \ \ \text{and} \ \ \fint_{K^\lambda_{2R}(z_0)}|Dw|^{p}dxdt\leq \kappa_2\lambda^{p}.
	\end{equation}
	\end{prop}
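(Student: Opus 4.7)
The plan is to split the claim in two. The lower bound in \eqref{eq2-lambda bound for w} is almost immediate from H\"older's inequality applied to the first half of \eqref{eq1-lambda bound for w}: since
\[
\kappa_1^{-1}\lambda^{p-1} \leq \fint_{K^\lambda_{R}(z_0)} |Dw|^{p-1}\, dz \leq \Big(\fint_{K^\lambda_{R}(z_0)} |Dw|^{p}\, dz\Big)^{(p-1)/p},
\]
we obtain $\fint_{K^\lambda_{R}(z_0)} |Dw|^p\, dz \geq \kappa_1^{-p/(p-1)}\lambda^p$. All the real work sits in the upper bound, which I would derive by first producing a pointwise sup estimate $\sup_{K^\lambda_{3R}(z_0)}|w|\lesi \lambda R$ and then plugging it into the Caccioppoli inequality.

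For the sup estimate, the idea is to apply Lemma \ref{lem-subslolution} with $\rho = 4R$, $\sigma = 16\lambda^{2-p}R^2$ (so that $K_{\rho,\sigma}(z_0) = K^\lambda_{4R}(z_0)$) and $\theta = 3/4$ (so that $K_{\theta\rho,\theta\sigma}(z_0) \supset K^\lambda_{3R}(z_0)$). With these choices $\sigma/\rho^p \sim \lambda^{2-p}R^{2-p}$ and $(\rho^p/\sigma)^{1/(p-2)} \sim \lambda R$, giving
\[
\sup_{K^\lambda_{3R}(z_0)} |w| \lesi \lambda^{2-p}R^{2-p}\fint_{K^\lambda_{4R}(z_0)}|w|^{p-1}\, dz + \lambda R.
\]
I would then dominate $\fint|w|^{p-1}$ by the Sobolev--Poincar\'e inequality \eqref{SP-inequality 2}, applied slice by slice in $t$ (using $w(\cdot,t) = 0$ on $\partial\Om\cap B_{4R}(x_0)$, together with \eqref{eq1-Reifenberg domain} to pass between $|\Om_{4R}(x_0)|$ and $|B_{4R}(x_0)|$), and integrate in $t$, so that the upper bound in \eqref{eq1-lambda bound for w} yields $\fint_{K^\lambda_{4R}(z_0)}|w|^{p-1}\,dz \lesi R^{p-1}\lambda^{p-1}$. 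Reinserted, the first term becomes $\lesi \lambda R$ as well, giving the required sup estimate.

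To close, I would apply Lemma \ref{Caccioppoli-boundary} with $\bar z = z_0$, $\rho_1 = 2R$, $\rho_2 = 3R$:
\[
\int_{K^\lambda_{2R}(z_0)} |Dw|^p\, dz \lesi \frac{1}{\lambda^{2-p}R^2}\int_{K^\lambda_{3R}(z_0)}|w|^2\, dz + \frac{1}{R^p}\int_{K^\lambda_{3R}(z_0)} |w|^p\, dz.
\]
Substituting the sup bound $|w|\lesi \lambda R$, both terms are $\lesi \lambda^p|K^\lambda_{3R}(z_0)|$ thanks to the identities $\lambda^{p-2}(\lambda R)^2/R^2 = \lambda^p$ and $(\lambda R)^p/R^p = \lambda^p$; dividing by $|K^\lambda_{2R}(z_0)|\sim|K^\lambda_{3R}(z_0)|$ (again by \eqref{eq1-Reifenberg domain}) delivers the upper bound in \eqref{eq2-lambda bound for w}. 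The delicate point I expect to verify carefully is that the additive remainder $(\rho^p/\sigma)^{1/(p-2)}$ from Lemma \ref{lem-subslolution} is exactly $\lesi \lambda R$ for our intrinsic scaling, so that no $\lambda$-unfriendly term survives and every power of $\lambda$ balances to give the final $\lambda^p$ on the right.
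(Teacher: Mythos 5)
Your proposal is correct and follows essentially the same line as the paper's own proof: lower bound via H\"older, a pointwise bound $\sup_{K^\lambda_{3R}}|w|\lesssim \lambda R$ obtained from Lemma \ref{lem-subslolution} (with the identical choice $\theta=3/4$, $\rho=4R$, $\sigma=\lambda^{2-p}(4R)^2$) combined with Sobolev--Poincar\'e \eqref{SP-inequality 2}, and then Caccioppoli on $K^\lambda_{2R}\subset K^\lambda_{3R}$ to close. The only cosmetic difference is the order of operations: the paper applies Caccioppoli first and uses H\"older/Young to fold the $L^2$ term into $\lambda^p$ plus the $L^p$ term before invoking the sup bound, whereas you establish the sup bound first and insert it into both Caccioppoli terms directly; both routes land on $\lambda^p$.
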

\begin{proof}
	By H\"older's inequality, we have
	$$
	\fint_{K^\lambda_{R}(z_0)}|Dw|^{p}dxdt\geq C\lambda^{p}.
	$$
	It remains to prove the second inequality in \eqref{eq2-lambda bound for w}. Indeed, from Lemma \ref{Caccioppoli-boundary} we have
	$$
	\fint_{K^\lambda_{2R}(z_0)}|D w|^pdxdt \leq \f{c}{\lambda^{2-p} R^2}\fint_{K^\lambda_{3R}(z_0)}|w|^2dxdt+\f{c}{R^p}\fint_{K^\lambda_{3R}(z_0)}|w|^pdxdt. 
	$$
	Applying H\"older's inequality and Young's inequality, we deduce
	$$
	\begin{aligned}
	\fint_{K^\lambda_{2R}(z_0)}|D w|^pdxdt &\leq \f{c}{\lambda^{2-p} R^2}\Big(\fint_{K^\lambda_{3R}(z_0)}|w|^pdxdt\Big)^{2/p}+\f{c}{R^p}\fint_{K^\lambda_{3R}(z_0)}|w|^pdxdt\\
	&\lesi \lambda^p+\f{1}{R^p}\fint_{K^\lambda_{3R}(z_0)}|w|^pdxdt\\
	&\lesi \lambda^p +\f{1}{R^p}\sup_{K^\lambda_{3R}(z_0)}|w|^p.
	\end{aligned}
	$$
	Hence, by using Lemma \ref{lem-subslolution} with $\theta=3/4, \rho=4R$ and $\sigma = \lambda^{2-p}(4R)^2$, we obtain
	$$
	\sup_{K^\lambda_{3R}(z_0)}|w|\lesi \f{\lambda^{2-p}}{R^{p-2}}\fint_{Q^\lambda_{4R}(z_0)}|w|^{p-1}dxdt+ R\lambda.
	$$
	By Sobolev-Poincar\'e's inequality \eqref{SP-inequality 2}, we further obtain
	$$
	\sup_{K^\lambda_{3R}(z_0)}|w|\lesi R\lambda^{2-p}\fint_{Q^\lambda_{4R}(z_0)}|Dw|^{p-1}dxdt+ R\lambda\lesi R\lambda.
	$$
	Hence,
	$$
	\fint_{K^\lambda_{2R}(z_0)}|D w|^pdxdt \lesi \lambda^p.
	$$
	This completes our proof.
\end{proof}

\begin{prop}
	\label{higherInte-Prop1-inter}
	Let $w$ be a weak solution to the problem \eqref{AppProb1-boundary}. Assume that 
	\begin{equation}\label{eq-Dup higher inter}
	\kappa^{-1} \lambda^p\leq \fint_{K^\lambda_{R}(z_0)}|Dw|^pdxdt \ \ \text{and} \ \ \fint_{K^\lambda_{2R}(z_0)}|Dw|^pdxdt\leq \kappa \lambda^p,
	\end{equation}
	for some $\kappa>1$. Then there exists $\epsilon_0>0$ so that 
	$$
	\Big(\fint_{K^\lambda_{R}(z_0)}|Dw|^{p+\epsilon_0}dxdt\Big)^{\f{1}{p+\epsilon_0}}\leq C\fint_{K^\lambda_{2R}(z_0)}|Dw|dxdt.
	$$
\end{prop}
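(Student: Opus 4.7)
The statement is the boundary counterpart of the interior higher-integrability result earlier in Section~3, and the plan is to follow the same Gehring-type strategy: combine the reverse Hölder inequality with gap supplied by Proposition~\ref{prop1- imply u p} with the classical self-improvement lemma of Gehring, and then collapse the right-hand side down to an $L^1$ average using Lemma~\ref{lem-KM}, exactly as in the passage from Proposition~\ref{prop1- imply u p} to Lemma~\ref{lem- imply u p}. The entire boundary geometry (Reifenberg flatness, boundary Sobolev--Poincaré) has already been absorbed into Proposition~\ref{prop1- imply u p}, so from this point on the argument is purely measure-theoretic.

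The hypothesis \eqref{eq-Dup higher inter} is exactly the hypothesis of Proposition~\ref{prop1- imply u p} with the same $\kappa$. Applying it, we get some $1\le q<p$ and $\sigma>0$, depending only on $n,p,\Lambda_1,\Lambda_2,\kappa$, such that for all $R\le r_1<r_2\le 2R$,
$$
\Big(\fint_{K_{r_1}^\lambda(z_0)}|Dw|^p\,dxdt\Big)^{1/p}
\le c\Big(\f{2R}{r_2-r_1}\Big)^\sigma
\Big(\fint_{K_{r_2}^\lambda(z_0)}|Dw|^q\,dxdt\Big)^{1/q}.
$$
Next I would invoke Gehring's lemma in its formulation for a family of nested sets (see e.g.\ Giaquinta--Martinazzi, or the version used in \cite{Ba}): the reverse Hölder inequality above automatically self-improves, producing $\epsilon_0=\epsilon_0(n,p,\Lambda_1,\Lambda_2,\kappa)>0$ and $\sigma'>0$ such that
$$
\Big(\fint_{K_{r_1}^\lambda(z_0)}|Dw|^{p+\epsilon_0}\,dxdt\Big)^{\f{1}{p+\epsilon_0}}
\le c\Big(\f{2R}{r_2-r_1}\Big)^{\sigma'}
\Big(\fint_{K_{r_2}^\lambda(z_0)}|Dw|^q\,dxdt\Big)^{1/q}
$$
for all $R\le r_1<r_2\le 2R$.

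Finally, since $q<p+\epsilon_0$, this upgraded reverse Hölder inequality has precisely the form required by Lemma~\ref{lem-KM} (applied to $f=|Dw|$ with exponents $p+\epsilon_0$ and $q$ on the family $\{K_{r}^\lambda(z_0):R\le r\le 2R\}$). Invoking that lemma exactly as in the derivation of Lemma~\ref{lem- imply u p} replaces the $L^q$ average on the right by $\fint_{K_{2R}^\lambda(z_0)}|Dw|\,dxdt$ while preserving $L^{p+\epsilon_0}$ on $K_R^\lambda(z_0)$ on the left, which is the claim. The only nontrivial ingredient is the Gehring step, but since the polynomial blow-up of the constant as $r_2-r_1\to 0$ is exactly of the type Gehring handles, this is standard; the main bookkeeping issue is merely checking that all constants depend on the hypothesized $\kappa$ and not on $R$ or $\lambda$, which follows because every estimate above is scale-invariant in the intrinsic cylinders $K_r^\lambda(z_0)$.
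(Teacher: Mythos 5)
Your proposal has a genuine gap at the ``Gehring step''.  Proposition~\ref{prop1- imply u p} gives a reverse H\"older inequality with a larger exponent on the \emph{smaller} domain, but only for the single nested chain $\{K_r^\lambda(z_0):R\le r\le 2R\}$ centred at the fixed point $z_0$.  Gehring's lemma does not apply to this kind of input: to self-improve $L^p$ to $L^{p+\epsilon_0}$ one needs a reverse H\"older inequality that holds on \emph{all} sufficiently small intrinsic cylinders inside the domain (so that a Vitali/Calder\'on--Zygmund covering can be run), not just on concentric cylinders around one base point.  Indeed, an inequality of the type
$$
\Big(\fint_{K_{r_1}^\lambda(z_0)}|f|^p\Big)^{1/p}\le \frac{c}{(r_2-r_1)^\sigma}\Big(\fint_{K_{r_2}^\lambda(z_0)}|f|^q\Big)^{1/q}
$$
merely bounds the $L^p$ norm by the $L^q$ norm on a fixed pair of domains once you take $r_1=R$, $r_2=2R$; it carries no information about any improved integrability exponent.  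This is precisely why the paper has the two separate conclusions Lemma~\ref{lem- imply u p} ($L^p$ on the left) and Proposition~\ref{higherInte-Prop1-inter} ($L^{p+\epsilon_0}$ on the left): Lemma~\ref{lem-KM}, which you invoke, cannot raise the exponent on the left-hand side, and neither can a nested-family ``Gehring''.

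The paper's proof takes a different route: it introduces the intrinsic rescaling $\bar{w}(x,t)=\frac{w(x_0+Rx,\,t_0+\lambda^{2-p}R^2t)}{R\lambda}$, which reduces the problem to a normalized solution on a unit-scale domain, and then directly invokes the full boundary higher-integrability theorem (Theorem~4.7 of~\cite{P2}), whose proof does run the genuine Gehring machinery on \emph{all} intrinsic cylinders in the Reifenberg flat domain.  Rescaling back, the hypotheses \eqref{eq-Dup higher inter} together with Lemma~\ref{lem- imply u p} then convert the $L^p$ average on the right into $\lambda$, and $\lambda$ into the $L^1$ average.  If you want a self-contained argument, you would have to prove a reverse H\"older inequality on arbitrary sub-cylinders (interior, boundary, and intermediate ones), combining Lemma~\ref{Caccioppoli-boundary} with the boundary Sobolev--Poincar\'e inequality \eqref{SP-inequality 1}, and then carry out the covering argument; Proposition~\ref{prop1- imply u p} alone does not furnish that input.
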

\begin{proof}
We now consider
	the rescaled maps
	\begin{equation}\label{Scaledmap1}
	\left\{
	\begin{aligned}
	&\bar{w}(x,t)= \f{u(x_0+R_ix,t_0+\lambda^{2-p}R^2t)}{R\lambda},\\
	&\bar{{\bf a}}_i(\xi,x,t)= \f{{\bf a}(\lambda \xi,x_0+Rx,t_0+\lambda^{2-p}R^2t)}{\lambda^{p-1}}.
	\end{aligned}\right.
	\end{equation}
	Then arguing similarly to the proof of Theorem 4.7 in \cite{P2}, we obtain
		\begin{equation}\label{eq1-prop2.2}
	\Big(\fint_{K_{1}}|D\bar{w}|^{p+\epsilon_0}dxdt\Big)^{\f{1}{p+\epsilon_0}}\leq C\Big(\fint_{K_{2}}|D\bar{w}|^{p}dxdt\Big)^{\sigma}
	\end{equation}
		where $\sigma = (2+\epsilon_0)/(2(p+\epsilon_0))$.
		
	Rescaling back in \eqref{eq1-prop2.2} we get that
	$$
	\Big(\fint_{K_{R}^\lambda}|Dw|^{p+\epsilon_0}dxdt\Big)^{\f{1}{p+\epsilon_0}}\leq C\lambda^{1-\sigma p}\Big(\fint_{K_{2R}^\lambda}|Dw|^{p}dxdt\Big)^{\sigma}.
	$$
 	This together with \eqref{eq-Dup higher inter} implies the desired estimate.
\end{proof}

We now give some comparision estimates for the weak solutions to \eqref{ParabolicProblem}.

\subsection{Comparision estimates}

Assume that $u$ is a weak solution to the problem \eqref{ParabolicProblem}. We consider the following equation
\begin{equation}\label{AppProb1b-boundary}
\left\{
\begin{aligned}
&w_t-\di\, \mathbf{a}(D w,x,t)=0 \quad &\text{in}& \quad K^\lambda_{4R}(z_0),\\
&w=u \quad &\text{on}& \quad \partial_p K^\lambda_{4R}(z_0).
\end{aligned}\right.
\end{equation}
It is well-known that $w$ exists and unique.

Arguing similarly to the proof of Lemma 4.1 in \cite{KM}, we can prove the following estimate.
\begin{lem}\label{lem1-boundary}
	Let $w$ be a weak solution to the problem  \eqref{AppProb1b-boundary}. Then for every $1\leq q<p-1+\f{1}{n+1}$, there exists $C$ so that
	\begin{equation}
		\label{eq1-lem1-boundary}
		\Big(\fint_{K^\lambda_{4R}(z_0)}|D(u-w)|^qdxdt\Big)^{1/q}\leq C\left[\f{|\mu|(K^\lambda_{4R}(z_0))}{|K^\lambda_{4R}(z_0)|^{(n+1)/(n+2)}}\right]^{\f{n+2}{p+(p-1)n}}.
	\end{equation}
\end{lem}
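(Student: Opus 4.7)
The plan is to adapt the interior Boccardo--Gallou\"et--Kuusi--Mingione argument of \cite[Lemma 4.1]{KM} to the intrinsic boundary cylinder $K^\lambda_{4R}(z_0)$, exploiting that the difference $v:=u-w$ vanishes on the entire parabolic boundary $\partial_p K^\lambda_{4R}(z_0)$: on the lateral portion $\partial_w K^\lambda_{4R}(z_0)$ both $u$ and $w$ vanish, while on the remaining pieces the equality $w=u$ is imposed in \eqref{AppProb1b-boundary}. This total vanishing makes $v$ and its truncations admissible test functions without any spatial cut-off, and reduces the boundary case to the same chain of estimates as the interior one.

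First I would subtract \eqref{AppProb1b-boundary} from \eqref{ParabolicProblem} to obtain
$$
v_t - \di[\mathbf{a}(Du,x,t)-\mathbf{a}(Dw,x,t)] = \mu \quad \text{in } K^\lambda_{4R}(z_0),
$$
and, after Steklov averaging in time, test against the truncation $T_k(v):=\max\{-k,\min\{v,k\}\}$. The monotonicity \eqref{eq2-functiona} together with the vanishing of $v$ on $\partial_p K^\lambda_{4R}(z_0)$ yields the energy inequality
$$
\int_{\{|v|<k\}\cap K^\lambda_{4R}(z_0)}|Dv|^p \, dx\,dt \;\leq\; C\,k\,|\mu|(K^\lambda_{4R}(z_0)),
$$
together with the companion $L^\infty_tL^2_x$-control $\sup_t\int |T_k(v)|^2 dx \lesi k\,|\mu|(K^\lambda_{4R}(z_0))$ produced by the same computation. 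Combining these two bounds through a parabolic Gagliardo--Nirenberg interpolation across the $(n+2)$-dimensional cylinder gives a level-set bound for $|\{|v|>k\}\cap K^\lambda_{4R}(z_0)|$. Writing
$$
\{|Dv|>s\}\cap K^\lambda_{4R}(z_0) \;\su\; \{|Dv|>s,\,|v|<k\}\cup\{|v|\geq k\},
$$
applying Chebyshev on the first piece and the level-set bound on the second, and optimizing in $k=k(s)$, produces $|Dv|\in \mathcal{M}^{p-1+\f{1}{n+1}}(K^\lambda_{4R}(z_0))$; integrating the distribution function for any $1\leq q<p-1+\f{1}{n+1}$ and regrouping the intrinsic volume $|K^\lambda_{4R}(z_0)|\sim \lambda^{2-p}R^{n+2}$ delivers both the exponent $\f{n+2}{p+(p-1)n}$ and the normalization $|K^\lambda_{4R}(z_0)|^{(n+1)/(n+2)}$ appearing on the right-hand side of \eqref{eq1-lem1-boundary}.

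The only point at which the boundary geometry genuinely enters is the parabolic Sobolev step inside the level-set estimate. The interior proof of \cite{KM} uses the sharp Sobolev embedding on Euclidean balls, but here the spatial cross-section is $\Om_{4R}(x_0)$ rather than $B_{4R}(x_0)$; I would therefore apply Lemma \ref{SP inequality} in its Reifenberg version to the zero extension $\bar v(\cdot,t)$ of $v(\cdot,t)$ to $B_{4R}(x_0)$. This extension is admissible precisely because $v(\cdot,t)\equiv 0$ on $\partial\Om\cap B_{4R}(x_0)$, and it is through this inequality that the $(\delta,R_0)$-Reifenberg flatness hypothesis actually enters the argument. Once this substitution is made, the remainder of the Kuusi--Mingione chain of inequalities is insensitive to interior versus boundary cylinders, and \eqref{eq1-lem1-boundary} follows.
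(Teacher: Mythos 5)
Your proposal is essentially the approach the paper has in mind: the paper's own ``proof'' of this lemma is a one-line citation to \cite[Lemma 4.1]{KM}, and you are correctly reconstructing how that interior Boccardo--Gallou\"et type truncation argument carries over to the intrinsic boundary cylinder. The key observation driving everything --- that $v=u-w$ vanishes on the \emph{entire} parabolic boundary $\partial_p K^\lambda_{4R}(z_0)$ (on $\partial_w K^\lambda_{4R}(z_0)$ because both $u$ and $w$ vanish there, elsewhere because $w=u$ is imposed) so that truncations $T_k(v)$ are admissible test functions without cut-offs --- is correct, and the chain of estimates you describe (energy bound via monotonicity, $L^\infty_tL^2_x$ bound, parabolic interpolation, level-set decomposition, optimization in $k$) is the standard [KM] machinery and produces the correct Marcinkiewicz exponent $p-1+\frac{1}{n+1}$ and the normalization $|K^\lambda_{4R}|^{(n+1)/(n+2)}$.

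There is one small but substantive inaccuracy in your final paragraph. You claim that ``it is through this inequality that the $(\delta,R_0)$-Reifenberg flatness hypothesis actually enters the argument,'' invoking the capacity-based Sobolev--Poincar\'e inequality (Lemma 2.6). But for this particular comparison lemma Reifenberg flatness is not needed at all. Precisely because $v(\cdot,t)$ vanishes (in the trace sense) on the \emph{whole} boundary of the spatial cross-section $\Om_{4R}(x_0)$ --- including the spherical part $\partial B_{4R}(x_0)\cap\Om$, not just the piece on $\partial\Om$ --- the zero extension $\bar v(\cdot,t)$ lies in $W^{1,p}_0(B_{4R}(x_0))$, and the classical Sobolev inequality for functions with zero boundary values applies with a constant independent of the domain. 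The Reifenberg-flat Sobolev--Poincar\'e inequality of Lemma 2.6 (which requires that the zero set of $\bar v$ have comparable capacity to a ball, and is where the flatness hypothesis really does enter) is needed in the paper for estimates on $w$ or $u$ themselves, which vanish only on $\partial_w K$ and not on the spherical cylinder; it is not needed for $v=u-w$. Your proof would still go through if you used Lemma 2.6, since the hypotheses are satisfied a fortiori, but attributing the Reifenberg requirement to this step misidentifies where the geometric assumption actually bites in the paper.
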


We now assume that $0<\delta<1/50$. Since $x_0\in \partial\Om$, there exists a new coordinate system whose variables are still denoted by $(x_1,\ldots,x_n)$ such that in this coordinate system the origin is some interior point of $\Om$, $x_0=(0,\ldots,0,-\f{\delta R}{2(1-\delta)})$ and
\begin{equation}\label{eq1-new coordinate}
B_{R/2}^+ \subset B_{R/2}\cap \Om\subset B_{R/2}\cap \{x: x_n>-3\delta R\}.
\end{equation}
Note that due to $\delta\in (0,1/50)$, we further obtain 
\begin{equation}\label{eq2-new coordinate}
B_{3R/8}\subset B_{R/4}(x_0)\subset B_{R/2}\subset B_R(x_0).
\end{equation}

Let $w$ be a weak solution to \eqref{AppProb1b-boundary} satisfying 
\begin{equation}
\label{eq- Dw condition boundary}
\f{1}{\kappa_2}\lambda^{p}\leq \fint_{K^\lambda_{R}(z_0)}|Dw|^{p}dxdt \ \ \text{and} \ \ \fint_{K^\lambda_{2R}(z_0)}|Dw|^{p}dxdt\leq \kappa_2\lambda^{p}.
\end{equation}

We now consider the following problem (in the new coordinate system)
\begin{equation}\label{AppProb2-boundary}
\left\{
\begin{aligned}
&h_t-\di\, \overline{\mathbf{a}}_{B_{R/2}}(D h,t)=0 \quad &\text{in}& \quad K_{R/2}^\lambda(0,t_0),\\
&h=w \quad &\text{on}& \quad \partial_p K_{R/2}^\lambda(0,t_0).
\end{aligned}\right.
\end{equation}
Using the argument as in the proof of Lemma \ref{lem2-inter} and the fact that $B_R\subset B_{2R}(x_0)$ we obtain the following estimate.
\begin{lem}
	\label{lem2-boundary}
	Let $h$ be a weak solution to \eqref{AppProb2-boundary}. Then there exist $C>0$ and $\sigma_2$ so that
	\begin{equation}
	\label{eq-lem2-boundary}
	\fint_{K_{R/2}^\lambda(0,t_0)}|D(w-h)|^pdxdt\leq C[{\bf a}]_{2,R_0}^{\sigma_2}\Big(\fint_{K_{2R}^\lambda(z_0)}|Dw|  dxdt\Big)^{p}.
	\end{equation}
\end{lem}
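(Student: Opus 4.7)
The plan is to mimic the interior proof of Lemma \ref{lem2-inter} with appropriate modifications to account for the boundary geometry. First, the coercivity condition \eqref{eq2-functiona} applied to the frozen nonlinearity $\overline{{\bf a}}_{B_{R/2}}$ will give
\[
\fint_{K_{R/2}^\lambda(0,t_0)}|D(w-h)|^p\,dxdt\lesi \fint_{K_{R/2}^\lambda(0,t_0)} \langle \overline{{\bf a}}_{B_{R/2}}(Dw,t)-\overline{{\bf a}}_{B_{R/2}}(Dh,t), D(w-h)\rangle\,dxdt.
\]
Next I would take $\varphi=w-h$ as a test function in the weak formulations of \eqref{AppProb1b-boundary} and \eqref{AppProb2-boundary}, which is admissible since $w-h$ vanishes on $\partial_p K_{R/2}^\lambda(0,t_0)$; the lateral boundary piece on $\partial_w K_{R/2}^\lambda(0,t_0)$ is handled because both $w$ and $h$ inherit the zero trace in the region $B_{R/2}\setminus \Omega$ after zero extension (using the inclusion $B_{R/2}^+\subset B_{R/2}\cap\Omega$). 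Subtracting the two weak formulations (with the usual Steklov averaging to justify using $w-h$ as a test function, as in Remark 1.2) yields the identity
\[
\fint_{K_{R/2}^\lambda(0,t_0)} \langle \overline{{\bf a}}_{B_{R/2}}(Dw,t)-\overline{{\bf a}}_{B_{R/2}}(Dh,t), D(w-h)\rangle\,dxdt
= \fint_{K_{R/2}^\lambda(0,t_0)} \langle \overline{{\bf a}}_{B_{R/2}}(Dw,t)-{\bf a}(Dw,x,t), D(w-h)\rangle\,dxdt,
\]
so that the quantity to estimate is bounded by $\fint_{K_{R/2}^\lambda(0,t_0)} \Theta({\bf a},B_{R/2})\,|Dw|^{p-1}|D(w-h)|\,dxdt$.

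Having reduced the problem to an integral involving $\Theta({\bf a},B_{R/2})$, I would apply Young's inequality with parameter $\tau>0$ to split off a $\tau\fint |D(w-h)|^p$ term to be absorbed on the left, leaving a remainder of the form $C(\tau)\fint \Theta({\bf a},B_{R/2})^{p/(p-1)}|Dw|^p$. Then H\"older with the dual exponents $\tfrac{p+\epsilon_0}{p}$ and $\tfrac{p+\epsilon_0}{\epsilon_0}$ (where $\epsilon_0$ is the higher integrability exponent) separates $|Dw|^{p+\epsilon_0}$ from $\Theta^{p(p+\epsilon_0)/((p-1)\epsilon_0)}$. The boundary higher integrability proposition (the second Proposition \ref{higherInte-Prop1-inter}) combined with the inclusion $K_{R/2}^\lambda(0,t_0)\subset K_R^\lambda(z_0)$ from \eqref{eq2-new coordinate} and the Reifenberg volume comparison \eqref{eq1-Reifenberg domain} gives
\[
\Big(\fint_{K_{R/2}^\lambda(0,t_0)}|Dw|^{p+\epsilon_0}dxdt\Big)^{\f{1}{p+\epsilon_0}}\lesi \fint_{K^\lambda_{2R}(z_0)}|Dw|\,dxdt,
\]
where the hypothesis \eqref{eq-Dup higher inter} is met thanks to \eqref{eq- Dw condition boundary}. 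The remaining $\Theta$-factor is controlled by Remark 2.2(b): $\fint \Theta({\bf a},B_{R/2})^{\gamma}dxdt\lesi [{\bf a}]_{2,R_0}^{\epsilon}$ for a suitable $\epsilon=\epsilon(\gamma)>0$, and again the Reifenberg volume comparison lets us replace the average over $K_{R/2}^\lambda(0,t_0)$ by the one over $Q_{(R/2,\lambda^{2-p}(R/2)^2)}(0,t_0)$ required in the BMO definition.

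Combining these estimates and absorbing the $\tau\fint|D(w-h)|^p$ term on the left yields
\[
\fint_{K_{R/2}^\lambda(0,t_0)}|D(w-h)|^p\,dxdt\lesi [{\bf a}]_{2,R_0}^{\sigma_2}\Big(\fint_{K^\lambda_{2R}(z_0)}|Dw|\,dxdt\Big)^p
\]
for a suitable exponent $\sigma_2>0$. The main obstacle I expect is bookkeeping the domain changes: the comparison nonlinearity is averaged over the ball $B_{R/2}$ centered at the origin in the new coordinate system, while $w$ is controlled on $K_{2R}^\lambda(z_0)$ in the original coordinates. Ensuring that (i) $K_{R/2}^\lambda(0,t_0)$ sits inside $K_R^\lambda(z_0)\subset K_{2R}^\lambda(z_0)$ with comparable measures and that (ii) $w-h$ extends by zero outside $\Omega$ to remain a valid test function is where the Reifenberg flatness is essential and must be invoked carefully.
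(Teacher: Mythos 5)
Your proposal is correct and matches the paper's intended approach: the paper's own proof is a one-line reference back to the argument of Lemma \ref{lem2-inter} (plus the observation that $B_{R/2}\subset B_R(x_0)\subset B_{2R}(x_0)$), and you execute exactly that plan, correctly identifying the boundary-specific points that need care — admissibility of $w-h$ as a test function via the zero lateral boundary values, the boundary version of the higher-integrability proposition, the change of coordinates and cylinders, and the Reifenberg volume comparison \eqref{eq1-Reifenberg domain} used to pass between averages over $K$-type and $Q$-type sets.
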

The main different from the interior case is that due to the lack of smoothness condition on the boundary of $\Om$, we can not expect that the $L^\vc$-norm of $Dh$ is finite near the boundary. To handle this trouble, we consider its associated problem.
\begin{equation}\label{AppProb3-boundary}
\left\{
\begin{aligned}
&v_t-\di\, \overline{\mathbf{a}}_{B_{R/2}}(D v,t)=0 \quad &\text{in}& \quad (Q_{R/2}^\lambda)^+(0,t_0),\\
&v=0 \quad &\text{on}& \quad Q_{R/2}^\lambda(0,t_0)\cap \{z=(x',x_n,t): x_n=0\}.
\end{aligned}\right.
\end{equation}

 \begin{prop}
 	\label{approxPropBoundary-1}
 	Let $\mu\in L^{1,\theta}(\Om_T), 1<\theta\leq n+2$. For each $\epsilon>0$ there exists $\delta>0$ so that the following holds true. Assume that $u$ is a weak solution to the problem \eqref{ParabolicProblem} satisfying
 	\begin{equation}
 	\label{eq-u-prop3-boundary}
 	\kappa^{-1}\lambda^{p-1}\leq \fint_{K_{R}^\lambda(z_0)}|Du|^{p-1}dxdt, \ \ \ \fint_{K_{4R}^\lambda(z_0)}|Du|^{p-1}dxdt\leq \kappa\lambda^{p-1},  \ \ \ \text{for some $\kappa>1$,}
 	\end{equation}
 	and 
 	\begin{equation}
 	\label{eq-mu-prop-boundary3}
 	\f{|\mu|(K_{4R}^\lambda(z_0))}{|K_{4R}^\lambda(z_0)|}\leq \delta\lambda^m.
 	\end{equation}
 	Then there exists a weak solution $v$ to the problem \eqref{AppProb3-boundary} satisfying
 	\begin{equation}
 	\label{eq-Lvc v-prop3-boundary}
 	\|D\bar{v}\|_{L^\vc(Q_{R/8}^\lambda(z_0))}\lesi \lambda,
 	\end{equation}
 	and
 	\begin{equation}
 	\label{eq-u-v-prop3}
 	\fint_{K_{R/4}^\lambda(z_0)}|D(u-\bar{v})|^{p-1}dxdt\leq (\epsilon\lambda)^{p-1}
 	\end{equation}
 	where $\bar{v}$ is the zero extension of $v$ to $Q_{R/2}^\lambda(0,t_0)\supset Q_{R/4}^\lambda(z_0)$.
 \end{prop}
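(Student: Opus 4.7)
The plan is to mirror the three--step chain of comparisons used in the interior case (Proposition \ref{appProp3-inter}), namely $u \to w \to h \to \bar v$, but to replace the interior regularity arguments by their Reifenberg flat boundary analogues developed in the previous subsection. The main new ingredient is the passage from the ``bent'' cylinder $K^\lambda_{R/2}(0,t_0)$ to the flat half--cylinder $(Q^\lambda_{R/2})^+(0,t_0)$, which is the only place the flatness of $\partial\Omega$ is used essentially.

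\textbf{Step 1 ($u \to w$).} First I would repeat the Morrey scaling calculation of \eqref{eq-mu}, using \eqref{eq-mu-prop-boundary3} and the Morrey bound $|\mu|(Q^\lambda_{4R}(z_0))/|Q^\lambda_{4R}(z_0)| \lesssim \lambda^{p-2} R^{-\theta}$, to obtain
$$\left[\frac{|\mu|(K^\lambda_{4R}(z_0))}{|K^\lambda_{4R}(z_0)|^{(n+1)/(n+2)}}\right]^{\frac{n+2}{p+(p-1)n}} \lesssim \mathcal{O}(\delta)\,\lambda.$$
Lemma \ref{lem1-boundary} then gives $\fint_{K^\lambda_{4R}(z_0)}|D(u-w)|^{p-1}\,dxdt \leq \mathcal{O}(\delta)\lambda^{p-1}$. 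Combining with \eqref{eq-u-prop3-boundary}, for $\delta$ small enough I would recover
$$\kappa_1^{-1}\lambda^{p-1} \leq \fint_{K^\lambda_{R}(z_0)}|Dw|^{p-1}\,dxdt,\qquad \fint_{K^\lambda_{4R}(z_0)}|Dw|^{p-1}\,dxdt \leq \kappa_1\lambda^{p-1}.$$

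\textbf{Step 2 ($w \to h$).} Proposition \ref{prop2- imply u p} upgrades the $(p-1)$--bounds for $Dw$ to two--sided $p$--power bounds of the form \eqref{eq- Dw condition boundary}, which is precisely the hypothesis required by Lemma \ref{lem2-boundary}. That lemma together with the smallness of $[\mathbf{a}]_{2,R_0}\leq \delta$ immediately yields
$$\fint_{K^\lambda_{R/2}(0,t_0)}|D(w-h)|^p\,dxdt \leq \mathcal{O}(\delta)\lambda^p,$$
and in particular the same estimate with the exponent $p-1$ by H\"older.

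\textbf{Step 3 ($h \to \bar v$).} This is the step I expect to be the main obstacle, because here the lack of smoothness of $\partial\Omega$ enters. In the new coordinates, \eqref{eq1-new coordinate} places $\partial\Omega \cap B_{R/2}$ inside a $3\delta R$--neighbourhood of $\{x_n=0\}$, so the symmetric difference of the domains of $h$ and $v$ is a thin strip of measure $\lesssim \delta |Q^\lambda_{R/2}|$. On this strip $h$ vanishes in the trace sense on $\partial \Omega$, while $v$ vanishes on $\{x_n=0\}$; using the Sobolev--Poincar\'e inequality \eqref{SP-inequality 1} and the sub--solution bound of Lemma \ref{lem-subslolution} one controls $h$ by $\mathcal{O}(\delta)R\lambda$ in this strip. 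Taking $h - \bar v$ as a test function in the weak formulation of \eqref{AppProb2-boundary} and \eqref{AppProb3-boundary} (with $\bar v$ extended by zero) and using the monotonicity \eqref{eq2-functiona} of $\overline{\mathbf{a}}_{B_{R/2}}$ together with the higher integrability of $Dh$ inherited from Proposition \ref{higherInte-Prop1-inter} gives
$$\fint_{K^\lambda_{R/4}(z_0)}|D(h-\bar v)|^{p-1}\,dxdt \leq \mathcal{O}(\delta)\lambda^{p-1}.$$

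\textbf{Conclusion.} The triangle inequality $|D(u-\bar v)|\leq |D(u-w)|+|D(w-h)|+|D(h-\bar v)|$ combined with Steps 1--3 yields \eqref{eq-u-v-prop3} once $\delta$ is chosen small enough in terms of $\epsilon$. For the Lipschitz bound \eqref{eq-Lvc v-prop3-boundary}, the boundary Lipschitz estimate for homogeneous, frozen, $p$--parabolic equations on flat half--cylinders (the boundary analogue of Proposition \ref{Lvc-Prop-inter}, cf.\ \cite[Ch.~8]{B}) gives
$$\|Dv\|_{L^\infty((Q^\lambda_{R/8})^+(0,t_0))}^{p} \lesssim \fint_{(Q^\lambda_{R/4})^+(0,t_0)}|Dv|^p\,dxdt,$$
and the right--hand side is bounded by $\lambda^p$ using the $p$--energy of $w$ from Step 2 and the comparisons from Steps 2--3. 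Since $\bar v\equiv 0$ outside $(Q^\lambda_{R/2})^+(0,t_0)$ and $Q^\lambda_{R/8}(z_0) \subset Q^\lambda_{R/2}(0,t_0)$, this provides the global bound on $Q^\lambda_{R/8}(z_0)$, completing the proof.
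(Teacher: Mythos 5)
Your Steps 1 and 2 match the paper exactly, and the overall skeleton $u\to w\to h\to\bar v$ plus the closing triangle inequality is right. The gap is in Step 3, and it is genuine.

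First, the problem \eqref{AppProb3-boundary} is \emph{under-determined}: only the flat boundary condition $v=0$ on $\{x_n=0\}$ is specified, with no initial data and no lateral boundary data. You never say which solution $v$ you are choosing, so there is nothing to compare $h$ against. The paper resolves this by a compactness/contradiction argument: rescaling to $R=8,\lambda=1$, assuming for contradiction that no good $v$ exists, taking a sequence of Reifenberg domains $\Omega_k$ with flatness $\to 0$ and solutions $h^k$ with uniformly bounded energy, and using the Aubin--Lions lemma to extract a strong $L^p$-limit $h^0$ which solves the flat half-cylinder problem \eqref{AppProb3-boundary}. The choice $v=h^0$ then contradicts the assumption. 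This step is exactly where the Reifenberg flatness is spent, and it produces the crucial $L^p$-closeness $\fint_{Q_4^+}|h-v|^p\leq\epsilon^p$ --- a \emph{zeroth-order} comparison, not a gradient one. Your proposal skips this mechanism entirely.

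Second, your proposed monotonicity argument --- ``taking $h-\bar v$ as a test function'' and invoking \eqref{eq2-functiona} --- does not apply, because $h-\bar v$ does not vanish on the parabolic boundary of $K^\lambda_{R/2}(0,t_0)$: the boundary values of $h$ are imposed on $\partial_p K^\lambda_{R/2}$, while $v$ is only constrained on $\{x_n=0\}$. In the interior Lemma \ref{lem2-inter} this trick works precisely because $v$ is the Dirichlet-matched solution. Here one must instead localize: the paper shows that $\bar v$ solves a perturbed equation
$\bar v_t-\mathrm{div}\,\overline{\mathbf a}_{B_4}(D\bar v,t)=D_{x_n}[\overline{\mathbf a}^n_{B_4}(D\bar v(x',0,t))\chi_{\{x_n<0\}}]$,
runs a Caccioppoli estimate for $h-\bar v$ on a \emph{smaller} cylinder $K_3$, and bounds the right-hand side by (i) the $L^p$-smallness of $h-\bar v$ from compactness and (ii) the thin-strip term $\fint_{K_3}|D\bar v(x',0,t)\chi_{\{x_n<0\}}|^p\leq\mathcal O(\delta)$, which is small because $D\bar v$ is bounded (Lipschitz estimate up to the \emph{flat} boundary) while the strip has measure $\mathcal O(\delta)$. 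Your strip bound on $|h|$ by $\mathcal O(\delta)R\lambda$ is also not available, since $Dh$ is \emph{not} in $L^\infty$ near the rough Reifenberg boundary --- the paper explicitly flags this as the obstruction that forces the introduction of $v$ in the first place. Without the compactness step and the subsequent Caccioppoli argument, the claimed gradient estimate $\fint|D(h-\bar v)|^{p-1}\leq\mathcal O(\delta)\lambda^{p-1}$ does not follow.
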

 \begin{proof}
 	Similarly to \eqref{eq-mu}, we have
 	$$
 	\left[\f{|\mu|(K^\lambda_{4R}(z_0))}{|K^\lambda_{4R}(z_0)|^{(n+1)/(n+2)}}\right]^{\f{n+2}{p+(p-1)n}}\lesi \mathcal{O}(\delta)\lambda.
    $$ 
This along with Lemma \ref{lem1-boundary} implies that
\begin{equation}\label{eq-proof Prop 3 boundary}
\fint_{K_{4R}^\lambda(z_0)}|D(u-w)|^{p-1}dxdt\leq \mathcal{O}(\delta)\lambda^{p-1}.
\end{equation}
From this inequality and \eqref{eq-u-prop3-boundary}, we find that 
$$
\lambda^{p-1}\lesi \fint_{K_{R}^\lambda(z_0)}|Dw|^{p-1}dxdt, \ \ \ \fint_{K_{4R}^\lambda(z_0)}|Dw|^{p-1}dxdt\lesi \lambda^{p-1}
$$
provided that $\delta$ is sufficiently small.

Applying Proposition \ref{prop2- imply u p}, we obtain
\begin{equation}\label{eq- wp}
\kappa^{-1}\lambda^{p}\leq \fint_{K^\lambda_{R}(z_0)}|Dw|^{p}dxdt, \ \ \ \fint_{K^\lambda_{2R}(z_0)}|Dw|^{p}dxdt\leq \kappa\lambda^{p}
\end{equation}
for some $\kappa>1$.

This together with Lemma  \ref{lem2-boundary} implies that if $h$ is a solution to \eqref{AppProb2-boundary}, then it also solves
 \begin{equation}\label{AppProb2s-boundary}
 \left\{
 \begin{aligned}
 &h_t-\di\, \overline{\mathbf{a}}_{B_{R/2}}(D h,t)=0 \quad &\text{in}& \quad K_{R/2}^\lambda(0,t_0),\\
 &h=0 \quad &\text{on}& \quad \partial_w K_{R/2}^\lambda(0,t_0),
 \end{aligned}\right.
 \end{equation}
 with
 $$
 \fint_{K_{R/2}^\lambda(0,t_0)}|h|^pdz \lesi \fint_{K_{R/2}^\lambda(0,t_0)}|h-w|^pdz+\fint_{K_{R/2}^\lambda(0,t_0)}|w|^pdz\lesi \lambda^p.
 $$
 We first show that there exists a weak solution $v$ to the problem \eqref{AppProb3-boundary} such that 
\begin{equation}\label{eq1a-prop3}
 \|D\bar{v}\|_{L^\vc(Q_{R/4}^\lambda(0,t_0))}\lesi \lambda,
\end{equation}
and
\begin{equation}\label{eq1b-prop3}
\fint_{K_{3R/8}^\lambda(0,t_0)}|D(h-\bar{v})|^{p}dxdt\leq (\epsilon\lambda)^{p}
\end{equation}
where $\bar{v}$ is the zero extension of $v$ to $Q_{R/2}^\lambda(0,t_0)$.

Once \eqref{eq1a-prop3} and \eqref{eq1b-prop3} are proved, the desired estimates follow immediately. Indeed, assume that \eqref{eq1a-prop3} and \eqref{eq1b-prop3} hold true. Since $K_{R/4}^\lambda(z_0)\subset K_{R/2}^\lambda(0,t_0)\subset K_{R}^\lambda(z_0)$, we have
$$
\begin{aligned}
\fint_{K_{R/4}^\lambda(z_0)}|D(u-v)|^{p-1}dxdt\lesi& \fint_{K_{R/4}^\lambda(z_0)}|D(u-w)|^{p-1}dxdt+\fint_{K_{R/4}^\lambda(z_0)}|D(w-h)|^{p-1}dxdt\\
&+\fint_{K_{R/4}^\lambda(z_0)}|D(h-v)|^{p-1}dxdt\\
\lesi& \fint_{K_{R/4}^\lambda(z_0)}|D(u-w)|^{p-1}dxdt+\fint_{K_{R/2}^\lambda(0,t_0)}|D(w-h)|^{p-1}dxdt\\
&+\fint_{K_{R/2}^\lambda(0,t_0)}|D(h-v)|^{p-1}dxdt.
\end{aligned}
$$
At this stage, applying \eqref{eq1b-prop3}, \eqref{eq-proof Prop 3 boundary} and \eqref{eq-lem2-boundary}, we get \eqref{eq-u-v-prop3}.

The estimate \eqref{eq-Lvc v-prop3-boundary} follows immediately from \eqref{eq1a-prop3} and the following 
$$
\|D\bar{v}\|_{L^\vc(Q_{3R/8}^\lambda(z_0))}\leq \|D\bar{v}\|_{L^\vc(Q_{R/4}^\lambda(0,t_0))} \ \ \text{(due to \eqref{eq2-new coordinate})}. 
$$

Hence, to complete the proof, we need only to prove \eqref{eq1a-prop3} and \eqref{eq1b-prop3}.

\noindent\underline{Proof of \eqref{eq1a-prop3} and \eqref{eq1b-prop3}:}
 
By using suitable scaled maps, it suffices to prove inequalities above for $\lambda=1$ and $R=8$, that is, if $h$ is a solution to \eqref{AppProb2s-boundary} with $\lambda=1, R=8$, then there exists a weak solution $v$ to the problem \eqref{AppProb3-boundary} with $\lambda=1, R=8$ such that 
\begin{equation}\label{eq1- normalized}
\|D\bar{v}\|_{L^\vc(Q_{2}(0,t_0))}\lesi 1,
\end{equation}
and
\begin{equation}\label{eq2- normalized}
\fint_{K_3(0,t_0)}|D(h-v)|^{p}dxdt\leq \epsilon^p.
\end{equation}

To do this, we first prove that 
\begin{equation}\label{eq-lvc v}
\|D\bar{v}\|_{L^\vc(Q_{2}(0,t_0))}\lesi 1,
\end{equation}
and
\begin{equation}\label{eq-h-v}
\fint_{Q^+_4(0,t_0)}|h-v|^{p}dxdt\leq \epsilon^p.
\end{equation}

Indeed, we assume, to the contrary, that there exist an $\epsilon>0$, a sequence of domains $\{\Omega_k\}$ such that
 \begin{equation}\label{geometricconditionOmegak}
 B_4^+\subset \Omega^k_4\subset \left\{x\in B_4: x_n>-\f{16}{k}\right\},
 \end{equation}
 and a  sequence of functions $\{h^k\}$ which solves the problem 
 \begin{equation}\label{AppProb2sk-boundary}
 \left\{
 \begin{aligned}
 &h^k_t-\di\, \overline{\mathbf{a}}_{B_4}(D h^k,t)=0 \quad &\text{in}& \quad K^k_{4}(0,t_0):=(\Om^k\cap B_4)\times (t_0-4^2, t_0+4^2)\\
 &h^k=0 \quad &\text{on}& \quad \partial_w K^k_{4}(0,t_0).
 \end{aligned}\right.
 \end{equation}
satisfying
 \begin{equation}
 \label{eq h-ApproxPro1 Ok}
 \fint_{K^k_4(0,t_0)}|D h^k|^p\lesi 1.
 \end{equation}
 But, we have
 \begin{equation}
 \label{eq2 v-ApproxPro1 Ok}
 \fint_{Q_4^+(0,t_0)}|h^k-v|^p> \epsilon,
 \end{equation}
 for any weak solution $v$ to the problem \eqref{AppProb3-boundary} with
 \begin{equation}
 \label{eq1 vk-ApproxPro1}
 \fint_{Q_4^+(0,t_0)}|Dv|^p\lesi 1.
 \end{equation} 	
 From \eqref{geometricconditionOmegak}, \eqref{eq h-ApproxPro1 Ok}, \eqref{eq1-functiona} and Poincar\'e's inequality, we have
 $$
 \fint_{Q_4^+(0,t_0)}|Dh^k|^pdxdt \leq \fint_{K^k_4(0,t_0)}|Dh^k|^pdxdt\leq \fint_{K^k_4(0,t_0)}|Dh^k|^pdxdt\lesi 1, 
 $$
 and
 $$
 \begin{aligned}
 \|h^k_t\|_{L^{p'}(t_0-4^2, t_0+4^2; W^{-1,p'}(B_4^+))}&=\|\di\, \overline{\mathbf{a}}_{B_4}(D h^k,t)\|_{L^{p'}(t_0-4^2, t_0+4^2; W^{-1,p'}(B_4^+))}\\
 	&\leq \|\overline{\mathbf{a}}_{B_4}(D h^k,t)\|_{L^{p'}(t_0-4^2, t_0+4^2; L^{p'}(B_4^+))}\\
 	&\leq \|(Dh^k)^{p-1}\|_{L^{p'}(t_0-4^2, t_0+4^2; L^{p'}(B_4^+))}\\
 	&\lesi  \Big(\int_{K^k_4(0,t_0)}|D h^k|^p\Big)^{\f{p-1}{p}}\lesi 1.
 \end{aligned}		
 $$
 Therefore, by Aubin-Lions Lemma in \cite[Chapter 3]{Sh}, there exists $h^0$ with $h^0\in L^p(t_0-4^2,t_0+4^2; W^{1,p}(B_4^+))$ and  $h^0_t\in L^{p'}(t_0-4^2,t_0+4^2; W^{-1,p'}(B_4^+))$ such that there exists a subsequence of $\{h^k\}$, which is still denoted by $\{h^k\}$, satisfying
 $$
 h^k\to h^0, \ \ \text{strongly in $L^p(t_0-4^2,t_0+4^2; L^p(B_4^+))$},
 $$
 $$
 Dh^k\to Dh^0, \ \ \text{weakly in $L^p(t_0-4^2,t_0+4^2; L^p(B_4^+))$},
 $$
and
 $$
 h_t^k\to h_t^0, \ \ \text{weakly in  $L^{p'}(t_0-4^2,t_0+4^2; W^{-1,p'}(B_4^+))$}.
 $$
 As a direct consequence, we have
 $$
 \int_{Q_4^+(0,t_0)}|Dh^0|^pdxdt\lesi \liminf_{k} \int_{Q_4^+(0,t_0)}|Dh^k|^pdxdt\lesi 1.
 $$
From \eqref{geometricconditionOmegak}, we have 
$$
h^0=0 \quad \text{on} \quad Q_4\cap \{x: x_n=0\}\times (t_0-4^2, t_0+4^2).
$$
Therefore, $h^0$ solves
 $$
 \left\{
 \begin{aligned}
 &h^0_t-\di \,\overline{\mathbf{a}}_{B_4}(D h^0,t)=0 \quad &\text{in}& \quad Q_4^+(0,t_0),\\
 &h^0=0 \quad &\text{on}& \quad Q_4\cap \{x: x_n=0\}\times (t_0-4^2, t_0+4^2).
 \end{aligned}\right.
 $$
  This contradicts to \eqref{eq2 v-ApproxPro1 Ok} by taking $v=h_0$ and $k$ sufficiently large. Hence, \eqref{eq-lvc v} and \eqref{eq-h-v} are proved.
  
  \medskip
  
  We now turn to prove \eqref{eq1- normalized} and \eqref{eq2- normalized}.	Let $\bar{v}$ be a zero extension of $v$ to $Q_4(z_0)$. Then it can be verified that $\bar{v}$ solves 
	$$
	\bar{v}_t-\di\, \overline{\mathbf{a}}_{B_4}(D \bar{v},t)=D_{x_n}\left[\overline{\mathbf{a}}^n_{B_4}(D\bar{v}(x',0,t))\chi_{\{x:x_n<0\}}\right] \ \ \text{in $Q_4(0,t_0)$},
	$$
	where $x=(x',x_n)$ and $\mathbf{a}=(\mathbf{a}^1,\ldots,\mathbf{a}^n)$.
	
	Therefore, $h-\bar{v}$ solve
	$$
	(h-\bar{v})_t-\di\, \overline{\mathbf{a}}_{B_4}(D (h-\bar{v}),t)= -D_{x_n}\left[\overline{\mathbf{a}}^n_{B_4}(D\bar{v}(x',0,t))\chi_{\{x:x_n<0\}}\right]\quad \text{in} \quad K_4(0,t_0).
	$$
	By a standard argument as in the proof of Lemma \ref{Caccioppoli-boundary}, we can show that
	\begin{equation}\label{eq1-proof-prop3.15}
		\begin{aligned}
	\fint_{K_3(0,t_0)}&|D(h-\bar{v})|^pdxdt\\
	&\lesi\fint_{K_4(0,t_0)}|h-\bar{v}|^pdxdt+\fint_{K_4(0,t_0)}|h-\bar{v}|^2dxdt+\fint_{K_4(0,t_0)}|D\bar{v}(x',0,t)\chi_{\{x:x_n<0\}}|^pdxdt.
	\end{aligned}
	\end{equation}
	Using \eqref{eq-h-v}, we discover that 
	\begin{equation}\label{eq2-proof-prop3.15}
	\int_{K_3(0,t_0)}|h-\bar{v}|^pdxdt\leq C\int_{Q^+_4(0,t_0)}|h-\bar{v}|^pdxdt+\int_{K_4(0,t_0)\backslash Q^+_4(0,t_0)}|h|^pdxdt\leq C(\epsilon_1 +\mathcal{O}(\delta)).
	\end{equation}
	It is not difficult to see that 
	\begin{equation}\label{eq3-proof-prop3.15}
	\fint_{K_4(0,t_0)}|h-\bar{v}|^2dxdt\leq C(\epsilon_1 +\mathcal{O}(\delta)).
	\end{equation}
	Moreover, by \eqref{eq1-new coordinate}, we have
	\begin{equation}\label{eq4-proof-prop3.15}
	\begin{aligned}
	\fint_{K_3(0,t_0)}|D\bar{v}(x',0,t)\chi_{\{x:x_n<0\}}|^pdxdt&\leq \fint_{K_3(0,t_0)\cap \{x: -12\delta<x_n\leq 0\}\times (t_0-3^3,t_0)}|D\bar{v}(x',0,t)|^pdxdt\\
	&\leq \mathcal{O}(\delta).
	\end{aligned}
	\end{equation}
	Taking the estimates \eqref{eq1-proof-prop3.15}, \eqref{eq2-proof-prop3.15}, \eqref{eq3-proof-prop3.15} and \eqref{eq4-proof-prop3.15} into account, we imply \eqref{eq2- normalized}.
	
	The assertion \eqref{eq1- normalized} follows immediately from \eqref{eq2- normalized}:
	$$
	\begin{aligned}
	\|v\|^p_{L^\vc(Q_2^+(0,t_0))}\leq& C\fint_{Q_4^+(0,t_0)}|Dv|^p\\
	\lesi&\fint_{Q_4^+(0,t_0)} |D(h-\bar{v})|^pdz+\fint_{Q_4^+(0,t_0)}|D(h-w)|^pdz+\fint_{Q_{4}^+(0,t))} |Dw^pdz\\
	\lesi&\fint_{Q_4^+(0,t_0)} |D(h-\bar{v})|^pdz+\fint_{Q_{4}^+(0,t))}|D(h-w)|^pdz+\fint_{K_{8}^\lambda(z_0))} |Dw|^pdz\\
	\lesi& 1,
	\end{aligned}
	$$
	where in the first inequality we used the H\"older estimate of $v$ near  the flat boundary in \cite{L}.
	
	This completes our proof.
	
\end{proof}
    	
\section{The global Marcinkiewicz estimates}

This section is devoted to the proof of Theorem \ref{mainthm1}.

\medskip

Let $\mu\in L^{1,\theta}(\Om_T)$ with $ 1<\theta\leq n+2$ and $u$ be a SOLA to \eqref{ParabolicProblem}. We assume that $0<\delta<\f{1}{50}$. Fix $1\leq s_1<s_2\leq 2$, $R<\min\{R_0/4, 1/4\}$ and $z_0\in \Om_T$.  We set
\begin{equation}\label{lambda0}
\lambda_0:=\fint_{K_{2R}(z_0)}|Du|^{p-1}dz +\left[\f{1}{\delta}\f{|\mu|(K_{2R}(z_0))}{|K_{2R}(z_0)|}\right]^{\f{p-1}{m}}+1,
\end{equation}
where $m=p-1+\f{1}{\theta-1}$.

For $\lambda>0$, we now define the level set
$$
E_{s_1}(\lambda)=\{z\in K_{s_1R}(z_0): |Du(z)|>\lambda\}.
$$

For $z\in E_{s_1}(\lambda)$, we define $$
G_z(r)= \fint_{K_{r}^\lambda(z)}|Du|^{p-1}dz +\left[\f{1}{\delta}\f{|\mu|(K^\lambda_{r}(z))}{|K^\lambda_{r}(z)|}\right]^{\f{p-1}{m}}.
$$
By Lebesgue's differentiation theorem, we have
\begin{equation}\label{eq Gz0}
\lim_{r\to 0}G_z(r)=|Du(z)|^{p-1}>\lambda^{p-1}.
\end{equation}
Note that for $\f{(s_2-s_1)R}{10^5}<r\leq (s_2-s_1)R$,  $z\in E_{s_1}(\lambda)$ and $\lambda>1$, we have $K_r^\lambda(z)\subset K_{2R}(z_0)$. Hence, for a such $r$ one gets that
\begin{equation}\label{eq1-Gz}
\begin{aligned}
G_z(r)&=\fint_{K^\lambda_{r}(z)}|Du|^{p-1}dz +\left[\f{1}{\delta}\f{|\mu|(K^\lambda_{r}(z))}{|K^\lambda_{r}(z)|}\right]^{\f{p-1}{m}}\\	
&\leq \f{|K_{2R}(z_0)|}{|K_{r}^\lambda(z)|}\fint_{K_{2R}(z_0)}|Du|^{p-1}dz + \left[\f{1}{\delta}\f{|\mu|(K_{2R}(z_0))}{|K^\lambda_{r}(z)|}\right]^{\f{p-1}{m}}\\
&\leq \f{|K_{2R}(z_0)|}{|K_{r}^\lambda(z)|}\fint_{K_{2R}(z_0)}|Du|^{p-1}dz + \left[\f{|K_{2R}(z_0)|}{|K^\lambda_{r}(z)|}\right]^{\f{p-1}{m}}\left[\f{1}{\delta}\f{|\mu|(K_{2R}(z_0))}{|K^\lambda_{2R}(z_0)|}\right]^{\f{p-1}{m}}\\
&\leq \f{|K_{2R}(z_0)|}{|K_{r}^\lambda(z)|}\left\{\fint_{K_{2R}(z_0)}|Du|^{p-1}dz + \left[\f{1}{\delta}\f{|\mu|(K_{2R}(z_0))}{|K^\lambda_{2R}(z_0)|}\right]^{\f{p-1}{m}}\right\}\\
&\leq \f{|K_{2R}(z_0)|}{|K_{r}^\lambda(z)|}\lambda_0\\
&\leq \f{|Q_{2R}(z_0)|}{|Q_{r}^\lambda(z)|}\f{|Q_{r}^\lambda(z)|}{|K_{r}^\lambda(z)|}\lambda_0\\
&\leq 4^n\f{(2R)^{n+2}}{\lambda^{2-p}r^{n+2}}\lambda_0.
\end{aligned}
\end{equation}

We now fix
\begin{equation}\label{C0}
\lambda>4^n\Big(\f{2\times 10^5}{s_2-s_1}\Big)^{n+2}\lambda_0=\tilde{C}_0 \lambda_0.
\end{equation}
Then from \eqref{eq1-Gz}, we obtain
$$
G_z(r)<\lambda^{p-1}, \ \ \ \text{for all $r\in [10^{-5}(s_2-s_1)R,(s_2-s_1)R]$}.
$$
This together with \eqref{eq Gz0} implies that for each $z\in E_{s_1}(\lambda)$ there exists $0<r_z<10^{-5}(s_2-s_1)R$ so that 
$$
G_z(r_z)=\lambda^{p-1}, \ \ \ \text{and $G_z(r)<\lambda^{p-1}$ for all $r\in (r_z, (s_2-s_1)R)$}.
$$
We now apply Vitali's covering lemma to obtain the following result directly.

\begin{lem}
	\label{coveringlemma}
	There exists a countable disjoint family $\{K_{r_i}^\lambda(z_i)\}_{i\in \mathcal{I}}$ with  $r_i<\f{(s_2-s_1)R}{10^5}$ and $z_i=(x_i,t_i)\in E_{s_1}(\lambda)$ such that:
\begin{enumerate}[{\rm (a)}]
	\item $E_{s_1}(\lambda)\subset \bigcup_{i}K_{5r_i}^\lambda(z_i)$;
	\item $G_{z_i}(r_i)=\lambda^{p-1}$,  and $G_{z_i}(r)<\lambda^{p-1}$ for all $r\in (r_i, (s_2-s_1)R)$.
\end{enumerate}
\end{lem}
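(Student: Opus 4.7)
The plan is to apply the standard Vitali covering lemma to the family of parabolic cylinders
$$\mathcal{F}=\{K_{r_z}^{\lambda}(z):z\in E_{s_1}(\lambda)\},$$
where $r_z\in(0,10^{-5}(s_2-s_1)R)$ is the stopping radius constructed immediately above the statement. The key preliminary remark is that since $\lambda$ is fixed throughout this argument, every member of $\mathcal{F}$ shares the same parabolic scaling: the time-width of $K_r^\lambda(z)$ is $2\lambda^{2-p}r^2$, so these cylinders behave as balls in a fixed parabolic quasi-metric, and the usual five-times enlargement form of the Vitali lemma applies verbatim.

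First I would use the uniform upper bound $\sup_{z\in E_{s_1}(\lambda)}r_z\le 10^{-5}(s_2-s_1)R<\infty$ to run the standard greedy selection. Setting $M_0=\sup_z r_z$, I pick $z_1\in E_{s_1}(\lambda)$ with $r_{z_1}>M_0/2$; having chosen disjoint cylinders $K_{r_1}^\lambda(z_1),\ldots,K_{r_j}^\lambda(z_j)$, I let $M_j$ be the supremum of $r_z$ over all $z\in E_{s_1}(\lambda)$ whose cylinder is disjoint from all previously chosen ones, and pick $z_{j+1}$ with $r_{z_{j+1}}>M_j/2$. The procedure yields an at-most-countable disjoint subfamily $\{K_{r_i}^\lambda(z_i)\}_{i\in\mathcal{I}}$.

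To verify (a) I argue in the usual way: for any $z\in E_{s_1}(\lambda)$, either its cylinder is one of the $K_{r_i}^\lambda(z_i)$ already chosen, or by maximality of the greedy procedure it must intersect some $K_{r_i}^\lambda(z_i)$ with $r_i\ge r_z/2$. A direct check in the parabolic metric then shows $K_{r_z}^\lambda(z)\subset K_{5r_i}^\lambda(z_i)$, so in particular $z\in K_{5r_i}^\lambda(z_i)$. Property (b) is inherited verbatim from the construction of each $r_{z_i}$: by definition it is the value at which $G_{z_i}$ first attains $\lambda^{p-1}$, and $G_{z_i}(r)<\lambda^{p-1}$ throughout $(r_{z_i},(s_2-s_1)R)$.

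There is no genuine obstacle here — this is a routine Vitali covering argument — but the two points worth flagging are that the uniform upper bound $r_z<10^{-5}(s_2-s_1)R$, established via \eqref{eq1-Gz} and the choice \eqref{C0}, is precisely what permits the greedy selection to begin; and that the fixed value of $\lambda$ is what makes the map $r\mapsto K_r^\lambda(z)$ a genuine rescaling, so that the five-times enlargement factor is available with no dependence on $\lambda$.
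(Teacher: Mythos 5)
Your proposal is correct and follows the same route as the paper, which simply invokes Vitali's covering lemma "directly" after establishing the existence of the stopping radius $r_z$; you have merely unfolded the standard greedy-selection proof of that lemma in the fixed parabolic quasi-metric (noting correctly that for fixed $\lambda$ the map $r\mapsto Q_r^\lambda(z)$ is a genuine dilation, so the $5$-times enlargement is available with constant independent of $\lambda$). The only thing worth adding is that when verifying the inclusion $K_{r_z}^\lambda(z)\subset K_{5r_i}^\lambda(z_i)$ one checks the spatial and temporal slots separately: the spatial one is the usual Euclidean computation, and the temporal one uses $2r_z^2+r_i^2\le 9r_i^2<25r_i^2$ once $r_z\le 2r_i$, which your argument implicitly contains.
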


For each $i$,	from Lemma \ref{coveringlemma} we have
	$$
	\begin{aligned}
	\lambda^{p-1}&=\fint_{K^\lambda_{r_i}(z_i)}|Du|^{p-1}dz +\left[\f{1}{\delta}\f{|\mu|(K^\lambda_{r_i}(z_i))}{|K^\lambda_{r_i}(z_i)|}\right]^{\f{p-1}{m}}.
	\end{aligned}
	$$
	This implies that
	$$
	\f{\lambda^{p-1}}{2}\leq  \fint_{K_{r_i}^\lambda(z_i)}|Du|^{p-1}dxdt \ \ \ \ \text{or} \ \ \ \ \f{\lambda^{p-1}}{2}\leq  \left[\f{1}{\delta}\f{|\mu|(K^\lambda_{r_i}(z_i))}{|K^\lambda_{r_i}(z_i)|}\right]^{\f{p-1}{m}}.
	$$
	This is equivalent to
	\begin{equation}\label{eq-I}
	K_{r_i}^\lambda(z_i)\leq \f{2}{\lambda^{p-1}} \int_{K_{r_i}^\lambda(z_i)}|Du|^{p-1}dxdt,
	\end{equation}
	or
	\begin{equation}\label{eq-J}
	\lambda^m |K^\lambda_{r_i}(z_i)|\leq\f{2^{\f{m}{p-1}}}{\delta}|\mu|(K^\lambda_{r_i}(z_i)).
	\end{equation}
	We now set 
	$$
	\mathcal{M}=\{i: \ \ \text{\eqref{eq-I} holds true}\}, \ \ \ \mathcal{N}=\{i: \ \ \text{\eqref{eq-J} holds true}\}.
	$$
	Then, $\mathcal{I}=\mathcal{M}\cup \mathcal{N}$.
	
	We have the following estimate.
	\begin{prop}
		\label{prop1}
		For each $i\in \mathcal{M}$ we have
		\begin{equation}\label{eq1-wKilambda}
		\begin{aligned}
		|K_{r_i}^\lambda(z_i)|
		&\lesi |K_{r_i}^\lambda(z_i)\cap E_{s_2}(\lambda/4)|.
		\end{aligned}
		\end{equation}
	\end{prop}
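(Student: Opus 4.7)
The plan is to run a standard ``comparison then density'' argument: for each $i\in\mathcal{M}$ I would produce, via the approximation results of Sections~3 or~4, a comparison function $v$ whose gradient is essentially bounded by $\lambda$, and then combine the pointwise bound $|Dv|\lesi\lambda$ with the averaged bound $\fint_{K_{r_i}^\lambda(z_i)}|D(u-v)|^{p-1}\le(\epsilon_1\lambda)^{p-1}$ to force a definite fraction of $K_{r_i}^\lambda(z_i)$ to lie in $E_{s_2}(\lambda/4)$.

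To extract $v$ I would distinguish two cases. If $\mathrm{dist}(x_i,\partial\Omega)>8r_i$, I would apply Proposition~\ref{appProp3-inter} at $z_i$ with radius $R=2r_i$; the hypotheses $\kappa^{-1}\lambda^{p-1}\le\fint_{K_R^\lambda}|Du|^{p-1}\le\kappa\lambda^{p-1}$ come from $i\in\mathcal{M}$ (lower bound, up to a volume-comparison loss of $2^{-(n+2)}$) and the stopping property $G_{z_i}(r)<\lambda^{p-1}$ for $r\in(r_i,(s_2-s_1)R)$ (upper bound), while the $\mu$-smallness $|\mu|(K_{4R}^\lambda)/|K_{4R}^\lambda|\le\delta\lambda^m$ also follows from the same stopping property applied at radius $8r_i$. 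If instead $\mathrm{dist}(x_i,\partial\Omega)\le 8r_i$, I would pick $y_i\in\partial\Omega$ with $|y_i-x_i|=\mathrm{dist}(x_i,\partial\Omega)$, set $\tilde z_i=(y_i,t_i)$, and apply Proposition~\ref{approxPropBoundary-1} at $\tilde z_i$ with radius $R=72r_i$, which is chosen so that $K_{r_i}^\lambda(z_i)\subset Q_{R/8}^\lambda(\tilde z_i)$ (where the $L^\infty$-bound on $D\bar v$ is valid) and so that the enlarged cube $K_{4R}^\lambda(\tilde z_i)$ still lies inside the region $G_{z_i}(r)<\lambda^{p-1}$; the volume ratios are handled by Remark~\ref{rem1}(c). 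In either case I end up with $v$ (or its zero extension $\bar v$) satisfying $\|Dv\|_{L^\infty(K_{r_i}^\lambda(z_i))}\le C_1\lambda$ and $\fint_{K_{r_i}^\lambda(z_i)}|D(u-v)|^{p-1}\,dz\le(\epsilon_1\lambda)^{p-1}$, provided $\delta$ is chosen small enough in terms of $\epsilon_1$.

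With this in hand, the rest is a level-set estimate. The defining inequality of $\mathcal{M}$ gives $\int_{K_{r_i}^\lambda(z_i)}|Du|^{p-1}\,dz\ge\frac{\lambda^{p-1}}{2}|K_{r_i}^\lambda(z_i)|$. Splitting along $E_{s_2}(\lambda/4)$, and noting that $K_{r_i}^\lambda(z_i)\subset K_{s_2R}(z_0)$ because $r_i<(s_2-s_1)R/10^5$ and $z_i\in K_{s_1R}(z_0)$, the contribution of the complement is at most $(\lambda/4)^{p-1}|K_{r_i}^\lambda(z_i)|$; subtracting and using $\tfrac12-4^{1-p}\ge\tfrac14$ for $p\ge 2$ yields
\begin{equation*}
\int_{K_{r_i}^\lambda(z_i)\cap E_{s_2}(\lambda/4)}|Du|^{p-1}\,dz\;\ge\;\tfrac14\lambda^{p-1}\,|K_{r_i}^\lambda(z_i)|.
\end{equation*}
On the other hand, from $|Du|^{p-1}\le 2^{p-2}\bigl(|D(u-v)|^{p-1}+|Dv|^{p-1}\bigr)$, the $L^\infty$-bound on $Dv$, and the comparison estimate, the left-hand side is bounded above by $C\epsilon_1^{p-1}\lambda^{p-1}|K_{r_i}^\lambda(z_i)|+C_1^{p-1}\lambda^{p-1}|K_{r_i}^\lambda(z_i)\cap E_{s_2}(\lambda/4)|$. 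A single smallness choice of $\epsilon_1$ (and hence of $\delta$) absorbs the first term into the lower bound and produces \eqref{eq1-wKilambda}.

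The main obstacle is the boundary bookkeeping: one has to pick the radius in Proposition~\ref{approxPropBoundary-1} large enough so that the $L^\infty$-bound on $D\bar v$ covers all of $K_{r_i}^\lambda(z_i)$ (since that bound only lives on $Q_{R/8}^\lambda(\tilde z_i)$), yet small enough that the stopping-time inequality $G_{z_i}(r)<\lambda^{p-1}$ still provides the required upper bounds on the enlarged cube $K_{4R}^\lambda(\tilde z_i)$. The generous safety factor $10^{-5}$ hidden in the choice of $r_i$ in Lemma~\ref{coveringlemma} is exactly what makes both constraints compatible. Once the geometry is arranged, the rest is algebra and a single smallness requirement on $\delta$ depending only on $n$, $p$, $\Lambda_1$, $\Lambda_2$.
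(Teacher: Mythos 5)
Your proposal takes a genuinely different route from the paper's. The paper's proof of Proposition~\ref{prop1} never invokes the $L^\infty$ bound on the comparison function $v$; instead it establishes a higher-integrability estimate $\fint_{K_{r_i}^\lambda(z_i)}|Du_k|^\nu\,dz\lesssim\lambda^\nu$ for a fixed exponent $\nu\in(p-1,\,p-1+\tfrac{1}{n+1})$, obtained by comparing $u_k$ only with the intermediate function $w_k^i$ (Lemma~\ref{lem1-boundary} plus the analogue of~\eqref{eq- wp}), and then applies H\"older's inequality with exponent $\nu/(p-1)>1$ to convert the bound on $\int_{K\cap E_{k,s_2}(\lambda/4)}|Du_k|^{p-1}$ into a positive power of the density ratio $|K\cap E_{k,s_2}(\lambda/4)|/|K|$. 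You instead run the full chain of comparisons $u\to w\to h\to v$ so as to use $\|Dv\|_{L^\infty}\lesssim\lambda$, and split $|Du|^{p-1}\lesssim|D(u-v)|^{p-1}+|Dv|^{p-1}$ on $K\cap E_{s_2}(\lambda/4)$. Your route costs more machinery (the whole of Propositions~\ref{appProp3-inter} and~\ref{approxPropBoundary-1}, which the paper reserves for Proposition~\ref{prop2}), but the closing algebra is simpler and the geometric bookkeeping you sketch is consistent.

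There is, however, a genuine gap. You apply Propositions~\ref{appProp3-inter} and~\ref{approxPropBoundary-1} directly to $u$, but both are stated for \emph{weak} solutions of \eqref{ParabolicProblem}. The SOLA $u$ is in general not a weak solution: it lies only in $L^q(0,T;W^{1,q}_0(\Omega))$ for $q<p-1+\tfrac{1}{n+1}<p$, so in particular the comparison Lemmas~\ref{lem1-inter} and~\ref{lem1-boundary} do not apply to $u$ itself. The paper flags this explicitly in its proof of Proposition~\ref{prop1} (``Since $u$ is not a weak solution, we can not apply the estimates results in Section 3 and Section 4 directly'') and carries out the whole argument on the regularized weak solutions $u_k$: the strong $L^{p-1}$ convergence $Du_k\to Du$ is used to transfer the stopping-time hypotheses from $u$ to $u_k$ for $k$ large, the comparison is performed against $w_k^i$ (equivalently $v_k^i$), the level set $E_{k,s_2}(\lambda/4)$ is defined with $Du_k$, and only at the very end is the limit $k\to\infty$ taken. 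To repair your argument you should do the same: replace $u$ by $u_k$, $\mu$ by $\mu_k$, and $E_{s_2}(\lambda/4)$ by $E_{k,s_2}(\lambda/4)$ throughout, obtain the density estimate $|K_{r_i}^\lambda(z_i)|\lesssim|K_{r_i}^\lambda(z_i)\cap E_{k,s_2}(\lambda/4)|$ with a constant independent of $k$, and then pass to the limit.
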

	\begin{proof}
		Let $u_k$ be a weak solution to the problem \eqref{RegularizedParabolicProblem} for each $k\in \mathbb{N}$. Since $u_k\to u$ in $L^{p-1}(0, T; W^{1,p-1}_0(\Om))$, from \eqref{eq-I} there exists $k_1$ such that for all $k\geq k_1$,
		$$
		K_{r_i}^\lambda(z_i)\leq \f{3}{\lambda^{p-1}} \int_{K_{r_i}^\lambda(z_i)}|Du_k|^{p-1}dxdt.
		$$
		For each $k\in \mathbb{N}$ and $s>0$, we define $E_{k,s}(\lambda)=\{z\in K_{sR}(z_0): |Du_k(z)|>\lambda\}$. Due to $K_{r_i}^\lambda(z_i)\subset K_{s_2R}(z_0)$, we have
	$$
	\begin{aligned}
	|K_{r_i}^\lambda(z_i)|	&\leq \f{3}{\lambda^{p-1}}\int_{K_{r_i}^\lambda(z_i)\backslash E_{k,s_2}(\lambda/4)}|Du_k|^{p-1}dxdt+\f{3}{\lambda^{p-1}}\int_{K_{r_i}^\lambda(z_i)\cap E_{k,s_2}(\lambda/4)}|Du_k|^{p-1}dxdt\\
	& \leq \f{|K_{r_i}^\lambda(z_i)|}{4^{p-2}}+ \f{3}{\lambda^{p-1}}\int_{K_{r_i}^\lambda(z_i)\cap E_{k,s_2}(\lambda/4)}|Du_k|^{p-1}dxdt.
	\end{aligned}
	$$
	This implies 
	\begin{equation}\label{eq-Kwi}
	|K_{r_i}^\lambda(z_i)|\lesi \f{1}{\lambda^{p-1}}\int_{K_{r_i}^\lambda(z_i)\cap E_{k,s_2}(\lambda/4)}|Du_k|^{p-1}dxdt.
	\end{equation}
	Note that from the definitions of $r_i$, the index set $\mathcal{M}$ and the fact that $u_k\to u$ in $L^{p-1}(0, T; W^{1,p-1}_0(\Om))$, there exists $k_2$ such that for all $k\geq k_2$ we have
	$$
	\f{\lambda^{p-1}}{3}\leq \fint_{K_{r_i}^\lambda(z_i)}|Du_k|^{p-1}dxdt, \ \ \ \fint_{K_{4r_i}^\lambda(z_i)}|Du_k|^{p-1}dxdt< 3\lambda^{p-1},
	$$
	and
	$$
	\f{|\mu_k|(K_{4r_i}^\lambda(z_i))}{|K_{4r_i}^\lambda(z_i)|}\leq \delta\lambda^m.
	$$
		
	By Holder's inequality, for a fixed $\nu\in (p-1, p-1+\f{1}{n+1})$ we have
	\begin{equation}\label{eq1-Du}
	\begin{aligned}
	\Big(\f{1}{|K_{r_i}^\lambda(z_i)|}\int_{K_{r_i}^\lambda(z_i)\cap E_{k,s_2}(\lambda/4)}&|Du_k|^{p-1}dxdt\Big)^{\f{1}{p-1}}\\
	&\leq \Big(\f{1}{|K_{r_i}^\lambda|}\int_{K_{r_i}^\lambda(z_i)\cap E_{k,s_2}(\lambda/4)}|Du_k|^{\nu}dxdt\Big)^{\f{1}{\nu}}
	\Big(\f{|K_{r_i}^\lambda(z_i)\cap E_{k,s_2}(\lambda/4)|}{|K_{r_i}^\lambda(z_i)|}\Big)^{1-\f{1}{\nu}},
	\end{aligned}
	\end{equation}
	Since $u$ is not a weak solution, we can not apply the estimates results in Section 3 and Section 4 directly. However, we can apply the estimates results to estimate $u$ via an approximation scheme.
	 
	For each $k$ and $i$, consider the following equation
	$$\left\{
	\begin{aligned}
		&(w_k^i)_t-\di\, \mathbf{a}(D w_k^i,x,t)=0 \quad &\text{in}& \quad Q^\lambda_{4r_i},\\
		&w_k^i=u_k \quad &\text{on}& \quad \partial_p Q^\lambda_{4r_i}.
	\end{aligned}\right.
	$$
	At this stage, arguing similarly to \eqref{eq-proof Prop 3 boundary}, we have
	\begin{equation}\label{eq-unu}
	\fint_{K_{4r_i}^\lambda(z_i)}|D(u_k-w_k^i)|^{\nu}dxdt\leq \mathcal{O}(\delta)\lambda^{\nu}.
	\end{equation}
	On the other hand, the argument used in the proof of \eqref{eq- wp} also implies that 
	$$
	c^{-1}\lambda^{p}\leq \fint_{K^\lambda_{r_i}(z_i)}|Dw_k^i|^{p}dxdt, \ \ \ \fint_{K^\lambda_{2r_i}(z_i)}|Dw_k^i|^{p}dxdt\leq c\lambda^{p},
	$$
	for some $c\geq 1$.
	
	As a consequence,
	$$
	\fint_{K^\lambda_{r_i}(z_i)}|Dw_k^i|^{\nu}dxdt\leq c\lambda^{\nu}.
	$$
	This along with \eqref{eq-unu} yields
	$$
	\fint_{K^\lambda_{r_i}(z_i)}|Du_k|^{\nu}dxdt\lesi \lambda^{\nu}
	$$
	provided that $\delta$ is sufficiently small.
	
	Inserting this into \eqref{eq1-Du}, we get that
	$$
	\begin{aligned}
	\Big(\f{1}{|K_{r_i}^\lambda(z_i)|}\int_{K_{r_i}^\lambda(z_i)\cap E_{k,s_2}(\lambda/4)}|Du_k|^{p-1}dxdt\Big)^{\f{1}{p-1}}
	&\leq c\lambda
	\Big(\f{|K_{r_i}^\lambda(z_i)\cap E_{k,s_2}(\lambda/4)|}{|K_{r_i}^\lambda(z_i)|}\Big)^{1-\f{1}{\nu}},
	\end{aligned}
	$$
	or equivalently,
		\begin{equation}\label{eq2-Du}
		\begin{aligned}
		\int_{K_{r_i}^\lambda(z_i)\cap E_{k,s_2}(\lambda/4)}|Du_k|^{p-1}dxdt
		&\leq c\lambda^{p-1}|K_{r_i}^\lambda(z_i)|
		\Big(\f{|K_{r_i}^\lambda(z_i)\cap E_{k,s_2}(\lambda/4)|}{|K_{r_i}^\lambda(z_i)|}\Big)^{\f{(p-1)(\nu-1)}{\nu}}.
		\end{aligned}
		\end{equation}
		This, in combination with \eqref{eq-Kwi}, gives that
		$$
		|K_{r_i}^\lambda(z)|\leq c|K_{r_i}^\lambda(z_i)|
		\Big(\f{|K_{r_i}^\lambda(z_i)\cap E_{k,s_2}(\lambda/4)|}{|K_{r_i}^\lambda(z_i)|}\Big)^{\f{(p-1)(\nu-1)}{\nu}}.
		$$
Therefore
		$$
		|K_{r_i}^\lambda(z_i)|
		\lesi |K_{r_i}^\lambda(z_i)\cap E_{k,s_2}(\lambda/4)|.
		$$
		Letting $k\to \vc$, we get 	\eqref{eq1-wKilambda} immediately.
	\end{proof}

\begin{prop}
	\label{prop2}
	There exists $N_0>1$ so that for any $\lambda>\tilde{C}_0\lambda_0$ we have 
	\begin{equation}\label{eq-ENlambda}
	\begin{aligned}
	\sum_{i\in \mathcal{M}}|E_{s_1}(N_0\lambda)\cap K^\lambda_{5r_i}(z_i)|\leq &\epsilon E_{s_2}(\lambda/4).
	\end{aligned}
	\end{equation}
	As a consequence, we have
	\begin{equation}\label{eq2-ENlambda}
	|E_{s_1}(N_0\lambda)|\leq \epsilon E_{s_2}(\lambda/4)+c\lambda^{-m}|\mu|(K_{s_2R}(z_0)).
	\end{equation}
\end{prop}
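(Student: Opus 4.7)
The plan is to run the standard ``good versus bad'' level-set argument on top of the Vitali covering of Lemma \ref{coveringlemma}. For each good index $i\in\mathcal{M}$, the stopping-time equality $G_{z_i}(r_i)=\lambda^{p-1}$ together with $G_{z_i}(r)<\lambda^{p-1}$ for $r\in (r_i,(s_2-s_1)R)$ gives, at an appropriate multiple of $r_i$, a two-sided comparability $\fint |Du|^{p-1}\sim \lambda^{p-1}$ and the smallness $|\mu|(K)/|K|\leq\delta\lambda^m$. These are precisely the hypotheses (\ref{eq-u-prop3})--(\ref{eq-mu-prop3}) of Proposition \ref{appProp3-inter} in the interior case (when $B_{5r_i}(x_i)\Subset\Omega$) and (\ref{eq-u-prop3-boundary})--(\ref{eq-mu-prop-boundary3}) of Proposition \ref{approxPropBoundary-1} in the boundary case (when $B_{5r_i}(x_i)$ meets $\partial\Omega$, in which case we recenter at a nearby boundary point $\tilde z_i\in\partial\Omega\times\mathbb{R}$ so that $K^\lambda_{5r_i}(z_i)$ is contained in the small cylinder where the proposition gives $L^\infty$ control on the comparison gradient). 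As in the proof of Proposition \ref{prop1}, we apply these comparison results to $u_k$ and pass to the limit $k\to\infty$ at the end.

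With the scale chosen so that the $L^\infty$-estimate holds on a cylinder containing $K^\lambda_{5r_i}(z_i)$, we obtain, for any prescribed $\epsilon_1>0$, a comparison $v^i$ with $\|Dv^i\|_{L^\infty(K^\lambda_{5r_i}(z_i))}\leq C_*\lambda$ and $\fint_{K^\lambda_{5r_i}(z_i)}|D(u-v^i)|^{p-1}dz\leq (\epsilon_1\lambda)^{p-1}$, provided $\delta$ is small enough. Fix $N_0:=2C_*$. On the set $E_{s_1}(N_0\lambda)\cap K^\lambda_{5r_i}(z_i)$ we then have $|D(u-v^i)|>C_*\lambda$, so Chebyshev's inequality gives
$$|E_{s_1}(N_0\lambda)\cap K^\lambda_{5r_i}(z_i)|\leq \frac{1}{(C_*\lambda)^{p-1}}\int_{K^\lambda_{5r_i}(z_i)}|D(u-v^i)|^{p-1}\,dz \leq C\Big(\tfrac{\epsilon_1}{C_*}\Big)^{p-1}|K^\lambda_{5r_i}(z_i)|.$$
By the Reifenberg doubling property (\ref{eq1-Reifenberg domain}) we have $|K^\lambda_{5r_i}(z_i)|\lesssim |K^\lambda_{r_i}(z_i)|$, and Proposition \ref{prop1} gives $|K^\lambda_{r_i}(z_i)|\lesssim |K^\lambda_{r_i}(z_i)\cap E_{s_2}(\lambda/4)|$. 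Summing over $i\in\mathcal{M}$ and using the disjointness of $\{K^\lambda_{r_i}(z_i)\}$ and their containment in $K_{s_2R}(z_0)$, we obtain
$$\sum_{i\in\mathcal{M}}|E_{s_1}(N_0\lambda)\cap K^\lambda_{5r_i}(z_i)|\leq C\epsilon_1^{p-1}|E_{s_2}(\lambda/4)|,$$
and choosing $\epsilon_1$ so that $C\epsilon_1^{p-1}\leq\epsilon$ proves (\ref{eq-ENlambda}).

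For the consequence (\ref{eq2-ENlambda}), we cover $E_{s_1}(N_0\lambda)\subset\bigcup_i K^\lambda_{5r_i}(z_i)$ by Lemma \ref{coveringlemma}(a) and split $\mathcal{I}=\mathcal{M}\cup\mathcal{N}$. The $\mathcal{M}$-part is (\ref{eq-ENlambda}). For $i\in\mathcal{N}$, the defining inequality (\ref{eq-J}) gives
$$|K^\lambda_{5r_i}(z_i)|\lesssim |K^\lambda_{r_i}(z_i)|\leq \frac{2^{m/(p-1)}}{\delta\lambda^m}|\mu|(K^\lambda_{r_i}(z_i)),$$
and summing by disjointness with $\bigcup_i K^\lambda_{r_i}(z_i)\subset K_{s_2R}(z_0)$ yields $\sum_{i\in\mathcal{N}}|K^\lambda_{5r_i}(z_i)|\leq c\lambda^{-m}|\mu|(K_{s_2R}(z_0))$.

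The main obstacle will be combining the two technical issues simultaneously: matching the scales so that the $L^\infty$-control regions of Propositions \ref{appProp3-inter} and \ref{approxPropBoundary-1} (which are given on $Q^\lambda_{R/2}$ and $Q^\lambda_{R/8}$ respectively) actually cover the enlarged covering cylinder $K^\lambda_{5r_i}(z_i)$ after possibly re-centering at a boundary point, and carrying out the entire Chebyshev comparison through the SOLA approximation $u_k\to u$, exactly as templated in the proof of Proposition \ref{prop1}, so that all hypotheses of the comparison propositions can be verified on the $u_k$ before passing to the limit.
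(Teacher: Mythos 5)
Your proposal follows essentially the same route as the paper: split $\mathcal{M}$ into interior and boundary indices, rescale so that Propositions \ref{appProp3-inter} and \ref{approxPropBoundary-1} deliver an $L^\infty$-controlled comparison gradient on a cylinder containing $K^\lambda_{5r_i}(z_i)$ (the paper takes $R=10r_i$ interiorly, $R=120r_i$ around a nearby boundary point $\bar z_i$, exactly as you anticipate), carry the estimates through the approximating sequence $u_k$, run Chebyshev against $D(u_k-v_k^i)$ with $N_0$ a multiple of the $L^\infty$ constants, use Proposition \ref{prop1} and disjointness to land on $|E_{s_2}(\lambda/4)|$, and handle $\mathcal{N}$ via \eqref{eq-J}. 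The reasoning and the way you track constants and scales match the paper's argument.
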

\begin{proof}
We now set
$$
\mathcal{M}_1:=\{i: B_{40r_i}^\lambda(x_i)\subset \Om\}, \ \text{and} \ \  \mathcal{M}_2:=\{i: B_{40r_i}^\lambda(z_i)\cap \Om^c\neq \emptyset\}.
$$	
For $i\in \mathcal{M}_1$, from the definition of $\mathcal{M}_1$ and Lemma \ref{coveringlemma}, we have
$$
\lambda^{p-1}\lesi \fint_{Q_{10r_i}^\lambda(z_i)}|Du|^{p-1}dxdt, \ \ \ \fint_{Q_{40r_i}^\lambda(z_i)}|Du|^{p-1}dxdt< \lambda^{p-1},
$$
and
$$
\f{|\mu|(Q_{40r_i}^\lambda(z_i))}{|Q_{40r_i}^\lambda(z_i)|}\leq \delta\lambda^m.
$$
Let $\{u_k\}$ be weak solutions to the problems \eqref{RegularizedParabolicProblem} for each $k\in \mathbb{N}$. Then from these two estimates above there exists $k_1>0$ so that for all $k\geq k_1$ we have
$$
\lambda^{p-1}\lesi \fint_{Q_{10r_i}^\lambda(z_i)}|Du_k|^{p-1}dxdt, \ \ \ \fint_{Q_{40r_i}^\lambda(z_i)}|Du_k|^{p-1}dxdt< \lambda^{p-1},
$$
and
$$
\f{|\mu_k|(Q_{40r_i}^\lambda(z_i))}{|Q_{40r_i}^\lambda(z_i)|}\leq \delta\lambda^m.
$$

Then applying Proposition \ref{appProp3-inter}, for each $k\geq k_1$ and $i\in \mathcal{M}_1$ we can find $v_{k}^i$ such that
\begin{equation}\label{eq-approx M1}
\|Dv_{k}^i\|_{L^\vc(Q^\lambda_{5r_i}(z_i))}\leq A_1\lambda, \ \ \fint_{Q^\lambda_{10r_i}(z_i)}|D(u_k-v_{k}^i)|^{p-1}\leq (\epsilon\lambda)^{p-1}.
\end{equation}
\medskip

For $i\in \mathcal{M}_2$, pick $\bar{x}_i\in B_{10r_i}(x_i)\cap \partial \Omega$. Setting  $\bar{z}_i=(\bar{x}_i,t_i)$, then we have
\begin{equation}
\label{eq-change the ball}
Q_{5r_i}^\lambda(z_i)\subset Q_{15r_i}^\lambda(\bar{z}_i)\subset Q_{280r_i}^\lambda(\bar{z}_i)\subset Q_{500r_i}^\lambda(z_i).
\end{equation}
Therefore, from the definition of $\mathcal{M}_2$ and Lemma \ref{coveringlemma}, we have
$$
\lambda^{p-1}\lesi \fint_{K_{120r_i}^\lambda(\bar{z}_i)}|Du|^{p-1}dxdt, \ \ \ \fint_{K_{480r_i}^\lambda(\bar{z}_i)}|Du|^{p-1}dxdt\lesi \lambda^{p-1},
$$
and
$$
\f{|\mu|(K_{480r_i}^\lambda(\bar{z}_i))}{|K_{480r_i}^\lambda(\bar{z}_i)|}\leq \delta\lambda^m.
$$
Hence, there exists $k_2$ such that for all $k\geq k_2$ we have
$$
\lambda^{p-1}\lesi \fint_{K_{120r_i}^\lambda(\bar{z}_i)}|Du_k|^{p-1}dxdt, \ \ \ \fint_{K_{480r_i}^\lambda(\bar{z}_i)}|Du_k|^{p-1}dxdt\lesi \lambda^{p-1},
$$
and
$$
\f{|\mu_k|(K_{480r_i}^\lambda(\bar{z}_i))}{|K_{480r_i}^\lambda(\bar{z}_i)|}\leq \delta\lambda^m.
$$

We now apply Proposition \ref{approxPropBoundary-1} to find a function $v_k^i$, or each $k\geq k_2$ and $i\in \mathcal{M}_2$ so that 
\begin{equation*}
\|Dv_k^i\|_{L^\vc(Q^\lambda_{15r_i}(\bar{z}_i))}\leq c\lambda, \ \ \fint_{K^\lambda_{30r_i}(\bar{z}_i)}|D(u_k-v_k^i)|^{p-1}\leq (\epsilon\lambda)^{p-1}.
\end{equation*}
This together with \eqref{eq-change the ball} implies
\begin{equation}\label{eq-approx M2}
\|Dv_k^i\|_{L^\vc(Q^\lambda_{5r_i}(z_i))}\leq A_2\lambda, \ \ \fint_{K^\lambda_{10r_i}(z_i)}|D(u_k-v_k^i)|^{p-1}\leq (\epsilon\lambda)^{p-1}.
\end{equation}

Taking $N_0=\max\{2A_1,2A_2\}$, from \eqref{eq-approx M1} and \eqref{eq-approx M2} we have, for $k\geq \max\{k_1,k_2\}$,
$$
\begin{aligned}
\sum_{i\in \mathcal{M}}&|E_{s_1}(N_0\lambda)\cap K^\lambda_{5r_i}(z_i)|\\
&\leq \sum_{i\in \mathcal{M}}|\{z\in K_{5r_i}^\lambda(z_i): |Du_k(z)|>N_0\lambda/2\}|+ \sum_{i\in \mathcal{M}}|\{z\in K_{5r_i}^\lambda(z_i): |D(u-u_k)(z)|>N_0\lambda/2\}|\\
&\lesi \sum_{j=1}^2\sum_{i\in \mathcal{M}_j}\Big[|\{z\in K_{5r_i}^\lambda(z_i): |D(u_k-v_k^i)(z)|>N_0\lambda\}|+|\{z\in K_{5r_i}^\lambda(z_i): |Dv_k^i(z)|>N_0\lambda/2\}|\Big]\\
& \ \ \ \ \ +|\{z\in \Om_T: |D(u-u_k)(z)|>N_0\lambda/2\}|\\
&\lesi \sum_{j=1}^2\sum_{i\in \mathcal{M}_j}|\{z\in K_{5r_i}^\lambda(z_i): |D(u_k-v_k^i)(z)|>N_0\lambda\}|++|\{z\in \Om_T: |D(u-u_k)(z)|>N_0\lambda/2\}|\\
&\lesi \sum_{j=1}^2\sum_{i\in \mathcal{M}_j} \f{1}{(N_0\lambda)^{p-1}}\int_{K_{5r_i}^\lambda(z_i)}|D(u_k-v_k^i)|^{p-1}dz+\f{1}{(N_0\lambda)^{p-1}}\int_{\Om_T}|D(u_k-u)|^{p-1}dz\\
&\lesi \epsilon |K_{5r_i}^\lambda(z_i)|++\f{1}{(N_0\lambda)^{p-1}}\int_{\Om_T}|D(u_k-u)|^{p-1}dz\\
&\lesi \epsilon |K_{r_i}^\lambda(z_i)|++\f{1}{(N_0\lambda)^{p-1}}\int_{\Om_T}|D(u_k-u)|^{p-1}dz.
\end{aligned}
$$
Letting $k\to \vc$ and using the fact that $K_{r_i}^\lambda(z_i)\subset K_{s_2R}(z_0)$, the estimate \eqref{eq-ENlambda} follows as desired.

To prove \eqref{eq2-ENlambda}, we observe that from Lemma \ref{coveringlemma}, \eqref{eq-ENlambda} and the fact that $\mathcal{I}=\mathcal{M}\cup \mathcal{N}$, we have
$$
\begin{aligned}
|E_{s_1}(N_0\lambda)|&\leq \sum_{i\in \mathcal{M}}|E_{s_1}(N_0\lambda)\cap K^\lambda_{5r_i}(z_i)|+\sum_{i\in \mathcal{N}}|E_{s_1}(N_0\lambda)\cap K^\lambda_{5r_i}(z_i)|\\
&\leq \epsilon E_{s_2}(\lambda/4)+\sum_{i\in \mathcal{N}}| K^\lambda_{5r_i}(z_i)|.
\end{aligned}
$$
From the definition of $\mathcal{N}$ and the fact that $K^\lambda_{r_i}(z_i)\subset K_{s_2R}(z_0)$, we have
$$
\sum_{i\in \mathcal{N}}|K^\lambda_{5r_i}(z_i)|\leq C\sum_{i\in \mathcal{N}}|K^\lambda_{r_i}(z_i)|\leq C\lambda^{-m}\sum_{i\in \mathcal{N}}|\mu|(K^\lambda_{r_i}(z_i))\leq C\lambda^{-m}|\mu|(K_{s_2R}(z_0)),
$$
where in the last inequality we used the fact that $\{K^\lambda_{r_i}(z_i)\}$ is pairwise disjoint.
\end{proof}

We now recall the result in \cite[Lemma 4.3]{HL}.
\begin{lem}\label{HF'sLemma}
	Let $f$ be a bounded nonnegative function on $[a_1, a_2]$ with $0<a_1<a_2$. Assume that for any $a_1\leq x_1\leq x_2\leq a_2$ we have
	$$
	f(x_1)\leq \theta_1 f(x_2)+\f{A_1}{(x_2-x_1)^{\theta_2}}+A_2,
	$$
	where $A_1, A_2>0$, $0<\theta_1<1$ and $\theta_2>0$. Then, there exists $c=c(\theta_1,\theta_2)$ so that
	$$
	f(x_1)\leq c\Big[\f{A_1}{(x_2-x_1)^{\theta_2}}+A_2\Big].
	$$
\end{lem}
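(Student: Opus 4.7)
The plan is to prove this by a standard geometric iteration on a shrinking sequence of intervals, which is the classical Giaquinta--Giusti iteration trick.

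Fix any $a_1 \leq x_1 < x_2 \leq a_2$. I will choose a parameter $\tau \in (0,1)$ (to be fixed shortly) and define an increasing sequence $\{t_k\}_{k \geq 0}$ by
$$t_0 = x_1, \qquad t_{k+1} = t_k + (1-\tau)\tau^k(x_2 - x_1),$$
so that $t_k \nearrow x_2$ and $t_{k+1} - t_k = (1-\tau)\tau^k(x_2-x_1)$. Since $t_k, t_{k+1} \in [x_1, x_2] \subset [a_1, a_2]$, applying the hypothesis to the pair $(t_k, t_{k+1})$ gives
$$f(t_k) \leq \theta_1 f(t_{k+1}) + \frac{A_1}{(1-\tau)^{\theta_2}\tau^{k\theta_2}(x_2-x_1)^{\theta_2}} + A_2.$$

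Next I iterate this $N$ times, yielding
$$f(x_1) \leq \theta_1^N f(t_N) + \sum_{k=0}^{N-1} \theta_1^k \left[\frac{A_1}{(1-\tau)^{\theta_2}(x_2-x_1)^{\theta_2}} \left(\frac{1}{\tau^{\theta_2}}\right)^k + A_2\right].$$
Now I fix $\tau \in (0,1)$ so that $\theta_1 / \tau^{\theta_2} < 1$, which is possible because $\theta_1 < 1$ (any $\tau > \theta_1^{1/\theta_2}$ works; for instance $\tau = \theta_1^{1/(2\theta_2)}$ if $\theta_1^{1/(2\theta_2)} < 1$, else one adjusts). With this choice both geometric series
$$\sum_{k=0}^{\infty} \left(\frac{\theta_1}{\tau^{\theta_2}}\right)^k \quad \text{and} \quad \sum_{k=0}^{\infty} \theta_1^k$$
converge, with sums depending only on $\theta_1, \theta_2$.

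Finally, since $f$ is bounded on $[a_1,a_2]$ and $\theta_1^N \to 0$ as $N \to \infty$, letting $N \to \infty$ gives
$$f(x_1) \leq \frac{C(\theta_1,\theta_2) A_1}{(x_2-x_1)^{\theta_2}} + C(\theta_1,\theta_2) A_2,$$
which is the required bound. There is no real obstacle here since this is a routine telescoping argument; the only point requiring care is the choice of $\tau$, which must be calibrated so that the geometric series containing $\tau^{-\theta_2}$ still converges against the decay factor $\theta_1^k$, and this is precisely where the hypothesis $\theta_1 < 1$ is essential.
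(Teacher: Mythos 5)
Your proof is correct, and it is precisely the standard Giaquinta--Giusti iteration argument. The paper itself does not prove this lemma: it simply cites it from Han--Lin, \emph{Elliptic Partial Differential Equations}, Lemma~4.3, and your argument reproduces exactly the proof given in that reference (geometric sequence of points $t_k$ filling $[x_1,x_2]$, iteration of the hypothesis, choice of $\tau$ with $\theta_1\tau^{-\theta_2}<1$, and boundedness of $f$ to kill the remainder term $\theta_1^N f(t_N)$). One trivial remark: the hedge about whether $\theta_1^{1/(2\theta_2)}<1$ is unnecessary, since $\theta_1\in(0,1)$ and $\theta_2>0$ force $\theta_1^{1/(2\theta_2)}<1$ automatically; the concrete choice $\tau=\theta_1^{1/(2\theta_2)}$ always works.
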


We now ready to give the proof of Theorem \ref{mainthm1}.

\begin{proof}[Proof of Theorem \ref{mainthm1}:]
	
	For each $k>0$ we define $|Du|_k=\min\{k, |Du|\}$. Then $|Du|_k\in \mathcal{M}^m(\Om_T)$ for all $k$. We set $E^k_s(\lambda)=\{z\in K_{sR}(z_0): |Du(z)|_k>\lambda\}$ for $s>0$.
	
	From \eqref{eq2-ENlambda}, it follows immediately that there exists $C$ independing of $k$ so that
	$$
	|E^k_{s_1}(N_0\lambda)|\leq \epsilon E^k_{s_2}(\lambda/4)+c\lambda^{-m}|\mu|(K_{s_2R}(z_0)), \ \ \  \lambda>\tilde{C}_0\lambda_0.
	$$
	Hence,
	$$
	\lambda^{m}|E^k_{s_1}(N_0\lambda)|\leq \epsilon \lambda^{m} E^k_{s_2}(\lambda/4)+C|\mu|(K_{s_2R}(z_0)), \ \ \ \ \lambda>\tilde{C}_0\lambda_0.
	$$
	This implies that 
	$$
	\begin{aligned}
	\sup_{\lambda>0}\lambda^{m}|E^k_{s_1}(N_0\lambda)|\leq &\sup_{0<\lambda\leq \tilde{C}_0\lambda_0 }\lambda^{m}|E^k_{s_1}(N_0\lambda)|+\sup_{\lambda> \tilde{C}_0\lambda_0 }\lambda^{m}|E^k_{s_1}(N_0\lambda)|\\
	&\leq (\tilde{C}_0\lambda_0)^m|K_{s_1R}(z_0)|+\epsilon \sup_{\lambda>0}\lambda^{m} |E^k_{s_2}(\lambda/4)|+C|\mu|(K_{s_2R}(z_0)).
	\end{aligned}
	$$
	Substituting the values of $\tilde{C}_0$ and $\lambda_0$ given by \eqref{lambda0} and \eqref{C0} into the inequality above, we obtain
	$$
	\begin{aligned}
	\||Du|_k\|^m_{\mathcal{M}^{m}(K_{s_1R}(z_0))}\leq& \epsilon \||Du|_k\|^m_{\mathcal{M}^{m}(K_{s_2R}(z_0))}+|\mu|(\Om_T)\\
	&+C\Big(\f{2\times 10^5}{s_2-s_1}\Big)^{(n+2)m}\Big[\fint_{K_{2R}(z_0)}|Du|^{p-1}dz +\left[\f{1}{\delta}\f{|\mu|(K_{2R}(z_0))}{|K_{2R}(z_0)|}\right]^{\f{p-1}{m}}|+1\Big]^m|\Om_T|.
	\end{aligned}
	$$
	Applying Lemma \ref{HF'sLemma}, we get that
	$$
	\begin{aligned}
	\||Du|_k\|^m_{\mathcal{M}^{m}(K_{s_1R}(z_0))}\lesi& |\mu|(\Om_T)+\Big(\f{2\times 10^5}{s_2-s_1}\Big)^{(n+2)m}\Big[\fint_{K_{2R}(z_0)}|Du|^{p-1}dz +\left[\f{1}{\delta}\f{|\mu|(K_{2R}(z_0))}{|K_{2R}(z_0)|}\right]^{\f{p-1}{m}}|+1\Big]^m|\Om_T|.
	\end{aligned}
	$$
	Taking $s_1=1, s_2=2$, we have
	$$
	\begin{aligned}
	\||Du|_k\|^m_{\mathcal{M}^{m}(K_{R}(z_0))}\lesi& |\mu|(\Om_T)+\Big[\fint_{K_{2R}(z_0)}|Du|^{p-1}dz +\left[\f{1}{\delta}\f{|\mu|(K_{2R}(z_0))}{|K_{2R}(z_0)|}\right]^{\f{p-1}{m}}|+1\Big]^m|\Om_T|\\
	\lesi& |\mu|(\Om_T)+\|Du\|_{L^{p-1}(\Om_T)}^{m(p-1)}+|\mu|(\Om_T)^{p-1}+1.\\
	\lesi& \Big[|\mu|(\Om_T)+\|Du\|_{L^{p-1}(\Om_T)}^{m}+1\Big]^{p-1}.
	\end{aligned}
	$$
	Since $\Om_T$ is bounded, we deduce that
	$$
	\||Du|_k\|^m_{\mathcal{M}^{m}(\Om_T)}\lesi \Big[|\mu|(\Om_T)+\|Du\|_{L^{p-1}(\Om_T)}^{m}+1\Big]^{p-1}.
	$$
	On the other hand, by tracking the constant in the proof of Lemma 2.2 in \cite{B.etal} we have
	$$
	\||Du|\|_{L^{p-1}(\Om_T)}\leq C|\mu|(\Om_T)^{\f{n+1}{n(p-1)}}.
	$$
	Hence,
	$$
	\||Du|_k\|_{\mathcal{M}^{m}(\Om_T)}\lesi \Big[|\mu|(\Om_T)^{\f{n+1}{n}}+|\mu|(\Om_T)^{\f{p-1}{m}}+1\Big]\lesi \Big[|\mu|(\Om_T)^{\f{n+1}{n}}+1\Big].
	$$
	Letting $k\to \vc$, we obtain
	$$
	\||Du|\|^m_{\mathcal{M}^{m}(\Om_T)}\lesi \Big[|\mu|(\Om_T)^{\f{n+1}{n}}+1\Big].
	$$
	This completes our proof.
	\end{proof}
	
\bigskip

\textbf{Acknowledgement.} The authors would like to thank the referee for useful comments and suggestions to improve the paper. The authors were supported by the research grant ARC DP140100649 from the Australian Research Council.

\end{document}